\documentclass[12pt]{elsarticle}
\usepackage{graphicx}
\usepackage{subfigure}
\usepackage{booktabs}   % For \toprule and \bottomrule
\usepackage{hhline}     % For \hhline
\usepackage{epstopdf} 

\usepackage{booktabs}
\usepackage{color}
\usepackage{latexsym}
\usepackage{setspace}
\usepackage{amsmath}
\usepackage{amssymb}
\usepackage{latexsym}
\usepackage{amsmath}
\usepackage{amssymb}
\usepackage[numbers]{natbib}
\usepackage{hyperref}
\hypersetup{
	colorlinks=true,
	linkcolor=blue,
	filecolor=blue,
	urlcolor=blue,
}
\usepackage{subfigure}
\usepackage{graphics}
\usepackage{graphicx}
\usepackage{multirow}
\makeatletter
\def\ps@pprintTitle{%
	\let\@oddhead\@empty
	\let\@evenhead\@empty
	\let\@oddfoot\@empty
	\let\@evenfoot\@oddfoot
}
\makeatother
\makeatletter \oddsidemargin  -.5cm \evensidemargin -.5cm \textwidth
17.5cm \topmargin -.65in \textheight 24cm

\newtheorem{defn}{Definition}[section]
\newcommand{\bd}{\begin{definition}}
	\newcommand{\ed}{\end{definition}}
\newcommand{\br}{\begin{remark}}
	\newcommand{\er}{\end{remark}}
\newcommand{\bea}{\begin{eqnarray}}
	\newcommand{\eea}{\end{eqnarray}}
\newcommand{\beann}{\begin{eqnarray*}}
	\newcommand{\eeann}{\end{eqnarray*}}
\newtheorem{theorem}{Theorem}[section]

\newtheorem{lemma}[theorem]{Lemma}

\newtheorem{corollary}[theorem]{Corollary}
\newtheorem{remark}{Remark}[section]
\newtheorem{example}{Example}[section]

\numberwithin{equation}{section} \numberwithin{equation}{section}
\title{Estimation of differential entropy for normal populations under prior information}
\author{Somnath Mandal$^{a}$ and Lakshmi Kanta Patra$^a$ 
	\\{\it $^a$Department of Mathematics, Indian Institute of Technology Bhilai, India.}}
\begin{document}
	\begin{frontmatter}
		\date{}
		%\maketitle
		\begin{abstract}
			The problem of nonlinear functional of parameters, such as differential entropy, has received much attention in information theory and statistics. In many situations, prior information about the parameters is available in the form of order restrictions. This information should be taken into account to obtain improved estimators. In this paper, we study the problems of point-wise and interval estimation of the entropy of two normal populations under a general location-invariant loss function. For the point-wise estimation, we have derived the maximum likelihood estimator (MLE), restricted MLE and the uniformly minimum variance unbiased estimator (UMVUE). Further, we derive a sufficient condition for improvement over affine equivariant estimators. A class of improved estimators is derived that dominates the best affine equivariant estimator (BAEE). Furthermore, we obtain a class of smooth improved estimator that dominates BAEE. We present special loss functions and derive expressions for the proposed improved estimators. A numerical study is conducted to compare the risk performance of the proposed estimators under quadratic and linex loss functions. For interval estimation, we have derived asymptotic confidence interval, bootstrap confidence intervals, HPD credible interval, and intervals based on generalized pivot variables. A comprehensive numerical comparison of these intervals is carried out in terms of coverage probabilities and average lengths. Finally, the proposed results are illustrated with a real example: the failure of the air-conditioning systems on Boeing 720 jet planes.\\ % Finally, a real data analysis is presented to demonstrate the practical applicability of the proposed entropy estimation procedures in information theoretic inference.  \\ The problem of estimating entropy under some prior constraints on the parameters arises naturally in many applications. \\

			\noindent{\it\bf Keywords:} Restricted maximum likelihood estimator, Location invariant loss function, Minimaxity, Stein-type estimator, Brewster and Zidek-type estimator, Pitman closeness criterion, Average length (AL), Coverage probability (CP).
			\\
			\\
			{\it\bf Mathematics Subject Classification (2000):} 62F10 . 62C20
		\end{abstract}

	\end{frontmatter}
	\section{Introduction}
%	\IEEEPARstart{E}{ntropy is} 
   Entropy is one of the most significant concepts in information theory since Shannon \cite{shannon1948mathematical} introduced it. The information theorists in communication theory try to deal with the transformation, processing and storage of signals in communication channels. Shannon \cite{shannon1948mathematical} proposed that there should be a mathematical measure of uncertainty called entropy, so that the information can be encoded and decoded efficiently. He demonstrated that a source could reliably transmit with a channel provided that its entropy is less than the channel capacity. He also showed that there is a minimum rate at which data can be compressed without less is equal to the source. Entropy is also widely used outside the field of communication theory such as biology (to measure genetic diversity and analyze DNA sequences), economics (to study the income inequality and market uncertainty), cryptography (to design secure encryption algorithm and assess randomness of keys), probability theory (to study stochastic process and measure uncertainty of random variables) and statistics (model selection criteria and measure information loss). Let $X$ be a random variable whose probability density function is $p(x;\lambda)$ where $\lambda$ is an unknown parameter. The Shannon entropy of the random variable $X$ is defined as
	\begin{equation}
		H(\lambda)=-E[\ln p(X;\lambda)] = - \int_{-\infty}^{\infty}p(x;\lambda) \ln p(x;\lambda) dx.
	\end{equation}
	It measures the average amount of uncertainty or information associated with the outcome of the random variable $X$. In signal processing, data generally have a continuous PDF because they are affected by continuous noise. Estimation of differential entropy for different statistical distribution plays an important role in the field of economics, information theory, reliability and life-testing studies and molecular science. In the fields of molecular sciences, the estimation of molecular entropy is very important to study of different chemical and biological phenomena because it measures the level of disorder and energy distribution in the complex systems (see \cite{nalewajski2002applications}). In economics, entropy estimation provides a systematic approach to exploit information in data to narrow the assumptions about the parameters of an econometric model (see \cite{golan1997maximum}). The authors in \cite{kamavaram2002entropy} came up with an entropy framework to analyze the uncertainty in architectural based software reliability models.  It demonstrated that systems with uniform operational profiles and components with moderate reliability have greater uncertainty and require more testing effort. It provided measures to aid in identifying critical components and in the effective allocation of testing resources. In the area of mathematical statistics, the estimation of entropy has received significant attention from numerous authors and researchers. However, there are a few works available in this direction from a decision-theoretic perspective. \cite{misra2005estimation} considered the problem of estimating the entropy of a multivariate normal distributed population. Likewise, \cite{kayal2013estimation} considered the estimation of entropy of several exponential distributions under a squared error loss function. Recently, \cite{kayal2015estimating} addressed the estimation of R\'enyi entropy for several exponential distributions sharing a common location parameter but different scale parameters. Further work on entropy estimation for exponential, logistic, half-logistic, and generalized half-logistic distributions can be found in \cite{kayal2013estimation}, \cite{patra2020measuring}, \cite{kang2012estimation}, \cite{seo2014entropy}, \cite{seo2015bayesian} and \cite{seo2017objective}.
	
	Suppose $(X_{i1},\dots,X_{in})$ be a random sample taken from the $i$-th
	normal population with mean $\mu_{i}$ and variance $\sigma^{2},$
	where $i=1,2$ and $\mu_1\le \mu_2$. Assume that two normal populations
	$N(\mu_{1},\sigma^{2})$ and $N(\mu_{2},\sigma^{2})$ are independent. Define
	\begin{eqnarray*}
		X_i=\sum_{j=1}^{n}X_{ij}/n ~~~\mbox{and} ~~~
		S^2=\sum_{i=1}^{2}\sum_{j=1}^{n}\left(X_{ij}-X_i\right)^2.
	\end{eqnarray*}
	Then, $X_i \sim N(\mu_i,\sigma^2/n)$ and $V=S^2/\sigma^{2}\sim \chi^2_{2(n-1)}$ (chi-square distribution with degrees of freedom $2(n-1)$). Note that $(\underline{X},S^2)$ is a complete and
	sufficient statistic, where $\underline{X}=(X_{1},X_{2})$. For this system the Shannon entropy is obtained as $H(\sigma)=1+\ln(2\pi) + 2\ln \sigma$. Now the problem of estimating $H(\sigma)$ is equivalent to estimating $\tau = \ln \sigma$. In this paper we consider the problem of estimating $\tau$ under a class of location invariant loss function $L(t)$ under the constrain $\mu_1\le \mu_2$. This loss function $L(t)$ must satisfies the following conditions:
	\begin{enumerate}
		\item [(C1)] $L(t)$ is a strictly convex with $L(0)=0$.
		\item [(C2)] The integral involving $L(t)$ are finite and can be differentiable  under the integral sign.
	\end{enumerate}
	 An usual method to estimate $\ln\sigma$ is in terms of the complete sufficient statistics $S$ (such as MLE, UMVUE). It has been noticed that in the statistical literature (see \cite{brewster1974improving}, \cite{stein1964}) the problem of estimation of the scale parameter, the estimators is depends only on the sufficient statistic $S$ are not best. If we use all the information from the both statistic $(\underline{X}, S)$ we will get the better estimator than our usual estimators  of $\tau$. This is one of the main reason for us top find a better estimator of $\tau$. Consider an affine group of transformations
	 $\mathcal{G}=\{g_{a,\underline{b}}:g_{a,\underline{b}}(\underline{l})
	 =a\underline{l}+\underline{b}, a>0,\underline{l}=(l_{1},l_{2}),\underline{b}=(b_{1},b_{2}),
	 \underline{l},\underline{b}\in \mathbb{R}\times \mathbb{R}\}$ and the structure of the affine equivariant estimator (AEE) for $\tau$ under $\mathcal{G}$ is obtained as $\delta_d(\underline{X},S)=\ln S + d.$ It is important to noting \cite{stein1964} and \cite{brewster1974improving} pioneered two significant methods that we have used to derived further improved estimators over the BAEE. There are several authors and researchers who have applied this technique to derived improved estimators in various estimation problems. Some recent uses of these techniques an be seen in \cite{kayal2013estimation}, \cite{misra2005estimation}, \cite{mondal2024improved}, \cite{petropoulos2017estimation}. When there is no constraints are imposed on the location parameters the following lemma gives the BAEE for $\tau$ under a general location invariant loss function $L(t)$.
	\begin{lemma}\label{BAEE}
	Under a location-invariant loss function $L(t)$, the best affine equivariant estimator (BAEE) of $\tau$ is given by:
		\begin{equation*}
			\delta_{0}(\underline{X},S)=\ln \sqrt{V} + d_0,
		\end{equation*}
		where $d_0$ is the unique solution of
		\begin{equation}\label{d0}
			E_{\underline{0},1}(L'(\ln \sqrt{V} + d_0)) = 0.
		\end{equation} 
	\end{lemma}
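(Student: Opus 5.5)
Every affine equivariant estimator has the form $\delta_d(\underline{X},S)=\ln S+d$. The plan is to show that its risk does not depend on the parameters, reduce it to a one-dimensional convex minimization in $d$, and then solve the first-order condition. The statement should be read with $\delta_0$ expressed through $S$, namely $\delta_0=\ln S+d_0$. The expectation in (\ref{d0}) is taken at $\sigma=1$, where $S=\sqrt V$, so it is the same as $E_{\underline{0},1}(L'(\ln S+d_0))$.

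First, since $L$ is location invariant, the loss for estimating $\tau=\ln\sigma$ is $L(\delta-\tau)$. Writing $S=\sigma\sqrt{V}$ with $V=S^2/\sigma^2\sim\chi^2_{2(n-1)}$, we get
\[
\delta_d-\tau=\ln S+d-\ln\sigma=\ln\sqrt{V}+d .
\]
The distribution of $V$ is free of $(\underline{\mu},\sigma)$. Hence the risk $R(d)=E\,L(\ln\sqrt V+d)$ is constant in the parameters, and we may compute it at $(\underline{\mu},\sigma)=(\underline 0,1)$. Finding the BAEE therefore amounts to minimizing $R(d)$ over $d\in\mathbb{R}$.

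Second, we show that $R$ has a unique minimizer.
\begin{itemize}
\item By (C1), $L$ is strictly convex, so $R$ is strictly convex in $d$.
\item By (C2), we may differentiate under the integral sign, which gives $R'(d)=E_{\underline 0,1}\big(L'(\ln\sqrt V+d)\big)$.
\item $R'$ is strictly increasing in $d$, because $L'$ is strictly increasing.
\end{itemize}
It follows that a root of $R'(d)=0$, if one exists, is unique and is the global minimizer.

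The main obstacle is existence of the root, i.e.\ that $R'$ changes sign. Since $L(0)=0$ and $L$ is strictly convex, I would take the convention (implicit in the paper) that $0$ is the minimum of $L$, so $L'(t)<0$ for $t<0$ and $L'(t)>0$ for $t>0$. As $d\to+\infty$, $\ln\sqrt V+d\to+\infty$ pointwise, and $L'$ is monotone, so monotone convergence (justified by (C2)) gives $R'(d)\to\lim_{t\to\infty}L'(t)>0$. Symmetrically, $R'(d)\to\lim_{t\to-\infty}L'(t)<0$ as $d\to-\infty$. Continuity of $R'$ then yields a root $d_0$, and the estimator $\ln S+d_0$, equivalently $\ln\sqrt V+d_0$ at $\sigma=1$, is the BAEE.
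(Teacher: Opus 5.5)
Your argument is correct. The paper states this lemma without proof and implicitly relies on exactly the reduction you give: the constant risk $E_{\underline{0},1}L(\ln\sqrt V+d)$, strict convexity in $d$, the first-order condition, and reading $\delta_0$ as $\ln S+d_0$. The bowl-shape assumption you flag ($L'<0$ on $(-\infty,0)$, $L'>0$ on $(0,\infty)$) is genuinely needed for $d_0$ to exist, and the paper itself invokes it later as ``Condition A''; under (C1)--(C2) alone, $L(t)=e^{t}-1$ gives $E\,L'(\ln\sqrt V+d)=e^{d}E\sqrt V>0$ for every $d$, so no root exists.
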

	Here, $L'(\cdot)$ denotes the derivative of the loss function $L(\cdot)$, and expectation is taken with respect to the distribution where the location parameter is $\underline{0}=(0,0)$ and the scale parameter is $1$.
	\begin{remark}\label{p}
	The risk function of the BAEE $\delta_{0}$ can be express as $R(\underline{\mu},\tau,\delta_{0})=E_{\underline{0},1}L(\ln S + d_0)$, which we will denote as $p$. This risk value remains constant since it does not depend on the parameters. As stated in \cite{kiefer1957invariance}, when there is no order constraint on $\mu_{1}$, $\mu_{2}$; the estimator $\delta_{0}=\ln S + d_0$ is minimax in respect to a general loss function $L(t)$.
	\end{remark}
	We now examine two location invariant loss functions such as squared error loss function $L_1(t)$ and linex loss function $L_2(t)$. Under the squared error loss function the same penalties are applied to overestimation and underestimation due its symmetric property. However for the linex loss function the penalize more in overestimation than underestimation. 
		\begin{example}
		Let $L_1(t)=t^2$, the BAEE of $\tau$ is obtained as $\delta_{01}=\ln\sqrt{V}-\frac{1}{2}\left[\ln2+\psi(n-1) \right]$, where $\psi(x)=\frac{d}{dx}\left(\ln \Gamma(x)\right)$, $x>0$ denote the digamma function.
	\end{example}
	\begin{example}
		Let $L_2(t)=e^{a_1t}- a_1t-1,\ a_1 \neq0$, the the BAEE of  $\tau$ is obtained as
		$\delta_{02}=\ln \sqrt{V} - \frac{1}{a_1}\ln\left[2^{\frac{a_1}{2}}\frac{\Gamma\left(n-1+\frac{a_1}{2}\right)}{\Gamma\left(n-1\right)}\right]$.
	\end{example}
	\begin{remark}
		The uniformly minimum variance unbiased estimator (UMVUE) of $\tau$ is $\delta_{U}=\ln S - \frac{1}{2}\left[\ln2 + \psi(n-1)\right]$, and it belongs to the class of all affine equivariant estimator. Note that $\delta_{U}$ is the BAEE of $\tau$ under the squared error loss.
	\end{remark}
Now we use the knowledge of the priori that is $\mu_{1}\leq \mu_{2}$ and present a general minimaxity result for the problem of estimating $\tau$.
	\begin{theorem}
		Let $\Omega_0$ be a subset of $\mathbb{R}^3$ such that there exists a sequence $\left\{\underline{a}_n=(a_{n_1},a_{n_2}):\ n\geq 1\right\}$ for which
		\begin{equation}\label{1.3}
			\lim\limits_{n}\inf\left\{(\underline{\mu},\tau):\ (\mu_{1}+a_{n_1}, \mu_{2}+a_{n_2}, \tau)\in \Omega_0\right\}=\mathbb{R}^3.
		\end{equation}
		Let $\delta$ be an estimator of $\tau$ such that $R(\underline{\mu},\tau,\delta)\leq R(\underline{\mu},\tau,\delta_0)=p$, for all $(\underline{\mu},\tau)\in \Omega_0$, where p is given in the Remark \ref{p}. Then, $\delta$ is minimax for estimating $\tau$ under a general location-invariant loss function $L(t)$ for $(\underline{\mu},\tau)\in \Omega_0$.
	\end{theorem}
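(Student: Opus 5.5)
The plan is to reduce the statement to the unrestricted minimaxity of $\delta_0$ recorded in Remark \ref{p}, using a translation argument in the location parameters. Since $\delta$ satisfies $R(\underline{\mu},\tau,\delta)\le p$ on $\Omega_0$, its maximum risk over $\Omega_0$ is at most $p$. It therefore suffices to show that every estimator $\delta^*$ satisfies
\begin{equation*}
\sup_{(\underline{\mu},\tau)\in\Omega_0}R(\underline{\mu},\tau,\delta^*)\ge p .
\end{equation*}
We argue by contradiction. Suppose some $\delta^*$ has $\sup_{\Omega_0}R(\cdot,\delta^*)=p-\varepsilon$ with $\varepsilon>0$. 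From $\delta^*$ we will build an estimator for the unrestricted problem on $\mathbb{R}^3$ whose maximum risk is at most $p-\varepsilon$. This contradicts the minimaxity of $\delta_0=\ln S+d_0$ over $\mathbb{R}^3$, whose constant risk is $p$ (Remark \ref{p}, from \cite{kiefer1957invariance}).

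\textbf{Step 1: shifted estimators.} For each $n$ define
\begin{equation*}
\delta_n^*(\underline{X},S)=\delta^*(X_1+a_{n_1},X_2+a_{n_2},S).
\end{equation*}
If $\underline{X}\sim N(\underline{\mu},\sigma^2/n)$, then $\underline{X}+\underline{a}_n\sim N(\underline{\mu}+\underline{a}_n,\sigma^2/n)$, while $S$ is unaffected and the loss depends only on $\delta-\tau$. Hence
\begin{equation*}
R(\underline{\mu},\tau,\delta_n^*)=R(\mu_1+a_{n_1},\mu_2+a_{n_2},\tau,\delta^*).
\end{equation*}
Consequently, on the set
\begin{equation*}
A_n=\{(\underline{\mu},\tau):(\mu_1+a_{n_1},\mu_2+a_{n_2},\tau)\in\Omega_0\}
\end{equation*}
we have $R(\cdot,\delta_n^*)\le p-\varepsilon$.

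\textbf{Step 2: Bayes risks under truncated priors.} Hypothesis (\ref{1.3}), $\liminf_n A_n=\mathbb{R}^3$, means that every point eventually lies in all $A_n$. Take a sequence of priors $\pi_k$ on $\mathbb{R}^3$ (for instance normal priors on $(\underline{\mu},\tau)$ with variances growing in $k$) whose Bayes risks $r_k$ satisfy $r_k\to p$. Such a sequence is what underlies the Kiefer minimaxity result, and it can be chosen with the $\pi_k$ proper. For fixed $k$, Fatou's lemma applied to $\pi_k(A_n^c)$ gives $\pi_k(A_n)\to1$ as $n\to\infty$, since $\mathbf 1_{A_n}\to1$ pointwise.

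Now bound the Bayes risk of $\delta_n^*$ by splitting over $A_n$ and $A_n^c$. On $A_n^c$ we have no control of $R(\cdot,\delta_n^*)$. To deal with this, replace $\delta_n^*$ by an estimator that uses $\delta_n^*$ but is compared against $\delta_0$, or more simply restrict to the portion $\pi_k(\cdot\mid A_n)$. The Bayes risk of the conditional prior $\pi_k(\cdot\mid A_n)$ still tends to $p$ as $k\to\infty$ and $n\to\infty$ suitably, because conditioning on a set of probability close to one perturbs the Bayes risk only slightly when the loss is bounded below by $0$. Under $\pi_k(\cdot\mid A_n)$, the estimator $\delta_n^*$ has Bayes risk at most $p-\varepsilon$. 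Choosing $k$ large and then $n$ large yields a prior whose Bayes risk exceeds $p-\varepsilon$, which is the desired contradiction.

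\textbf{Main obstacle.} The delicate step is showing that the Bayes risk under the conditioned priors $\pi_k(\cdot\mid A_n)$ stays close to $p$. The first approach to try is to pick the $\pi_k$ invariant in the limit, so that they are approximately uniform on large boxes. One then uses monotone convergence in $n$ for fixed $k$ to show that the conditional Bayes risk converges to $r_k$. An alternative is to invoke the general least-favourable-sequence result of Kiefer directly on the sets $A_n$, as is standard in Brewster–Zidek-type minimaxity arguments.
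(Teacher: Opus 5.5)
\textbf{Verdict: genuine gap in Step 2.} The paper itself gives no argument beyond pointing to Theorem 3.1 of \cite{blumenthal1968estimation}. Your reduction and Step 1 are the right start. Shifting the data by $\underline{a}_n$ gives $R(\underline{\mu},\tau,\delta_n^*)=R(\underline{\mu}+\underline{a}_n,\tau,\delta^*)\le p-\varepsilon$ on $A_n$, and Fatou gives $\pi_k(A_n)\to 1$.

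The gap is the step everything rests on: that the Bayes risk $r(\pi_{k,n})$ of $\pi_{k,n}=\pi_k(\cdot\mid A_n)$ stays close to $r_k$. The reason you give runs the wrong way. From $L\ge 0$ you get, for every estimator $\delta$, $\int R(\theta,\delta)\,d\pi_k\ge \pi_k(A_n)\int R(\theta,\delta)\,d\pi_{k,n}$. This yields only $r(\pi_{k,n})\le r_k/\pi_k(A_n)$, which is an upper bound. The contradiction needs the lower bound $\liminf_n r(\pi_{k,n})\ge r_k$. That is, no estimator may gain by behaving well on $A_n$ while paying an enormous risk on the small set $A_n^c$. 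With unbounded losses such as $L_1$ and $L_2$, nothing in your argument rules this out. Splitting $\int R(\theta,\delta_n^*)\,d\pi_k\ge r_k$ over $A_n$ and $A_n^c$ only shows that $\delta_n^*$ must have large risk on $A_n^c$, which is no contradiction.

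Your proposed remedies do not supply this bound either. The sets $A_n$ need not be monotone. Convergence of $\int R(\theta,\delta)\,d\pi_{k,n}$ for each fixed $\delta$ does not pass to the infimum over $\delta$ that defines $r(\pi_{k,n})$. Kiefer's least-favourable sequences concern the full invariant problem, not the non-invariant sets $A_n$.

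A simple repair is to make the competitor's risk uniformly bounded before splitting. Put $\tilde\delta_n=\min\{\max\{\delta_n^*,\delta_0-c\},\delta_0+c\}$. Since $L$ is bowl shaped with minimum $0$ at $t=0$, check the cases $\tau\in[\delta_0-c,\delta_0+c]$, $\tau>\delta_0+c$ and $\tau<\delta_0-c$. This gives $L(\tilde\delta_n-\tau)\le L(\delta_n^*-\tau)+L(\delta_0-\tau)\mathbf{1}\{|\delta_0-\tau|>c\}$. Hence $R(\theta,\tilde\delta_n)\le p-\varepsilon+\eta_c$ on $A_n$, where $\eta_c=E_{\underline{0},1}\big[L(\ln\sqrt{V}+d_0)\mathbf{1}\{|\ln\sqrt{V}+d_0|>c\}\big]\to 0$ as $c\to\infty$. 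Since $\tilde\delta_n\in[\delta_0-c,\delta_0+c]$, convexity gives $R(\theta,\tilde\delta_n)\le K_c=E_{\underline{0},1}[L(\ln\sqrt{V}+d_0-c)+L(\ln\sqrt{V}+d_0+c)]<\infty$ for every $\theta$. Therefore $r_k\le (p-\varepsilon+\eta_c)\,\pi_k(A_n)+K_c\,\pi_k(A_n^c)$. Let $n\to\infty$, then $k\to\infty$, and choose $c$ with $\eta_c<\varepsilon$; this gives $p<p$.

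Alternatively, you can avoid priors altogether. Extract a limit rule from $\{\delta_n^*\}$ by weak compactness of decision rules. Its risk at each $\theta$ is at most $p-\varepsilon$, because $\theta\in A_n$ for all large $n$ and risk is lower semicontinuous. Derandomize it using convexity of $L$, and this contradicts Remark \ref{p} directly.

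Your route also needs proper priors with $\inf_\delta\int R\,d\pi_k\to p$. That is more than the minimaxity quoted in Remark \ref{p}. It is standard for the affine group, but you should state it explicitly as an input.
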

\noindent \textit{Proof:} The proof is based on approach used in Theorem 3.1 of \cite{blumenthal1968estimation}.
\begin{theorem}
	Let $d_{01}=-\frac{1}{2}\left[\ln2+\psi(n-1)\right]$, $d_{02}= -  \frac{1}{a_1}\ln\left[2^{\frac{a_1}{2}}\frac{\Gamma\left(n-1+\frac{a_1}{2}\right)}{\Gamma\left(n-1\right)}\right]$, and for $y>0$ let $\psi_1(y)=\frac{d}{dy}\psi(y)$ denotes the trigamma function. Then we have 
	
	\begin{enumerate}
		\item [(i)] $B(\delta_{01})= 0.$
		
		\item [(ii)] $B(\delta_{02})= \frac{1}{2} \psi(n-1) - \frac{1}{a_1}\ln \left(\frac{\Gamma\left(n-1+\frac{a_1}{2}\right)}{\Gamma(n-1)}\right).$
		
		\item [(iii)] $R(\delta_{01})=\frac{1}{4}\psi_1(n-1).$
		
		\item [(iv)] $R(\delta_{02})= 2^{\frac{a_1-1}{2}} \left(\frac{\Gamma(\frac{n+a_1-2}{2})}{\Gamma(n-1)}\right)^{1-\frac{1}{a_1}} -\frac{a_1}{2}\psi(n-1) +\ln\left(\frac{\Gamma(\frac{2n+a_1-2}{2})}{\Gamma(n-1)}\right)-1.$
		%		
		%		\item[(v)] The risk difference $\nabla $ is $R(\delta_{d_{02}})-R(\delta_{d_{01}})$. 
	\end{enumerate}
\end{theorem}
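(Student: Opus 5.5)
The plan is to reduce every quantity to a moment of $V=S^2/\sigma^2\sim\chi^2_{2(n-1)}$. For any affine equivariant estimator $\delta_d=\ln S+d$ we have
\[
\delta_d-\tau=\ln S-\ln\sigma+d=\ln\sqrt{V}+d .
\]
So the bias $B(\delta_d)=E(\delta_d-\tau)$ and the risk $R(\delta_d)=E\,L(\ln\sqrt V+d)$ depend on neither $\underline{\mu}$ nor $\sigma$, consistent with Remark \ref{p}. I will use two standard facts about $V$, both obtained from its density $\frac{1}{2^{n-1}\Gamma(n-1)}v^{n-2}e^{-v/2}$.

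First, the Mellin-type moment
\[
M(s)=E\,V^{s}=2^{s}\,\frac{\Gamma(n-1+s)}{\Gamma(n-1)},\qquad s>-(n-1).
\]
Second, its logarithmic derivatives at $s=0$, which follow by differentiating $\ln M(s)$ under the integral sign as permitted by (C2):
\[
E\ln V=\ln 2+\psi(n-1),\qquad \mathrm{Var}(\ln V)=\psi_1(n-1).
\]

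With these in hand, parts (i)--(iii) are direct substitutions.

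\emph{Part (i).} We get $B(\delta_{01})=\tfrac12(\ln2+\psi(n-1))+d_{01}=0$, which also recovers the unbiasedness of $\delta_U$.

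\emph{Part (ii).} Expanding $d_{02}=-\tfrac12\ln 2-\tfrac1{a_1}\ln\frac{\Gamma(n-1+a_1/2)}{\Gamma(n-1)}$ gives
\[
B(\delta_{02})=\tfrac12\psi(n-1)-\tfrac1{a_1}\ln\frac{\Gamma(n-1+a_1/2)}{\Gamma(n-1)}
\]
after the $\ln 2$ terms cancel.

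\emph{Part (iii).} Since $\delta_{01}$ is unbiased, its squared-error risk equals its variance. This gives $R(\delta_{01})=\mathrm{Var}(\tfrac12\ln V)=\tfrac14\psi_1(n-1)$.

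\emph{Part (iv).} For the linex loss I will write
\[
R(\delta_{02})=E\,e^{a_1(\ln\sqrt V+d_{02})}-a_1B(\delta_{02})-1=e^{a_1d_{02}}M(a_1/2)-a_1B(\delta_{02})-1 .
\]
By construction of $d_{02}$, which solves $E\,L_2'(\ln\sqrt V+d_{02})=0$ as in Lemma \ref{BAEE}, we have $e^{a_1d_{02}}M(a_1/2)=1$. The risk should therefore collapse to
\[
-a_1B(\delta_{02})=\ln\frac{\Gamma(n-1+a_1/2)}{\Gamma(n-1)}-\tfrac{a_1}{2}\psi(n-1).
\]
The last two terms of the stated formula in (iv) match this, since $\Gamma(\frac{2n+a_1-2}{2})=\Gamma(n-1+a_1/2)$.

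The main obstacle is reconciling the remaining two terms of (iv) with this computation. The stated expression has a leading term $2^{(a_1-1)/2}(\Gamma(\cdot)/\Gamma(n-1))^{1-1/a_1}$ and a separate $-1$. For the formula to be correct, that leading term must equal $e^{a_1d_{02}}M(a_1/2)=1$. As written, with argument $\frac{n+a_1-2}{2}$ and exponent $1-1/a_1$, it does not obviously do so.

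I would first recompute $e^{a_1 d_{02}}\cdot 2^{a_1/2}\Gamma(n-1+a_1/2)/\Gamma(n-1)$ carefully, keeping the $2^{a_1/2}$ inside the logarithm defining $d_{02}$. If this confirms that the product is identically $1$, I would present (iv) in the simplified form above and note that the first term reduces to $1$ and cancels the $-1$. Otherwise, I would trace whether a different normalisation, for example $V\sim\chi^2_{n-1}$, was intended in that term.

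Finally, I will justify the finiteness of $M(a_1/2)$. It is finite exactly when $a_1>-2(n-1)$, which is implicitly required by (C2) and by the definition of $d_{02}$.
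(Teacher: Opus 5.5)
Parts (i)--(iii) are correct and follow the paper's route. The paper also reduces everything to $V\sim\chi^2_{2(n-1)}$ and invokes $\mathrm{Var}(\ln\chi^2_p)=\psi_1(p/2)$. One small correction: differentiability of $M(s)$ at $s=0$ comes from the gamma integral being smooth for $s>-(n-1)$. It does not come from (C2), which concerns $L$.

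The genuine issue is (iv), and it lies in the statement, not in your computation. Your reduction $R(\delta_{02})=e^{a_1d_{02}}M(a_1/2)-a_1B(\delta_{02})-1$, with $e^{a_1d_{02}}M(a_1/2)=1$, is right. Hence
\[
R(\delta_{02})=-a_1B(\delta_{02})=\ln\frac{\Gamma(n-1+a_1/2)}{\Gamma(n-1)}-\frac{a_1}{2}\,\psi(n-1).
\]
However, the write-up you sketch, ``note that the first term reduces to $1$'', would be false. It conflates two different quantities:
\begin{itemize}
\item $e^{a_1d_{02}}M(a_1/2)$, which equals $1$ by the very definition of $d_{02}$;
\item the printed term $2^{(a_1-1)/2}\bigl(\Gamma(\frac{n+a_1-2}{2})/\Gamma(n-1)\bigr)^{1-1/a_1}$, which does not.
\end{itemize}
Two checks show this. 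For $a_1=2$, $n=3$, the printed term is $\sqrt{2}\,(\sqrt{\pi}/2)^{1/2}=\pi^{1/4}$, so the printed formula gives about $0.602$. Direct computation ($V\sim\chi^2_4$, $d_{02}=-\ln 2$) gives $\ln 2-\psi(2)\approx 0.270$. For $a_1=-3$, $n=8$, the printed expression is about $-0.81$, which is impossible since $L_2\ge 0$.

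A different normalisation of $V$ cannot rescue it either. For any law of $V$ used consistently, the first-order condition defining $d_{02}$ is exactly $E\,e^{a_1(\ln\sqrt V+d_{02})}=1$.

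The printed term is in fact $e^{d_{02}}\,E\,V^{a_1/2}=2^{(a_1-1)/2}\bigl(\Gamma(n-1+\frac{a_1}{2})/\Gamma(n-1)\bigr)^{1-1/a_1}$. In other words, the factor $a_1$ multiplying $d_{02}$ in the exponent was dropped, and $\frac{2n+a_1-2}{2}$ was mistyped as $\frac{n+a_1-2}{2}$. This term is identically $1$ in $n$ only when $a_1=1$. The paper's proof just calls (iv) ``straightforward'' and gives no derivation that could reconcile this.

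So stop hedging. Prove (iv) in the corrected form displayed above, and state explicitly that the printed version does not hold in general.
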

\noindent\textit{Proof:} The proofs of $(i)$ and $(ii)$ are straightforward. Statement $(iii)$ can be established using the fact that, for a $\chi_p^2$ random variable, $\mbox{Var}(\ln \chi_p^2)=\psi_1(\frac{p}{2})$. The proof of $(iv)$ is also straightforward.\\

The remainder of this paper is organized as follows. In Section \ref{sec2}, we derive an estimator which dominates the BAEE with respect to a general location invariant loss function $L(t)$. Notice that this estimator is not a smooth estimator. In the next Section \ref{sec3} we find a smooth estimator which dominates the BAEE under $L(t)$. In Section \ref{sec4} we consider estimation of $\tau$ under the generalized pitman closeness criterion with a general bowl shaped loss function. In Section \ref{sec5} we compare numerically the risk performance of the improved estimator  with the respect to the BAEE using Monte-Carlo simulations. Further in Section \ref{sec6} we consider interval estimation of the parameter $\tau$ under the same model assumption. In Subsection \ref{sec61} we obtain the asymptotic confidence interval using delta method. In Subsection \ref{sec62} we obtain bootstrap confidence intervals. In Subsection \ref{sec63} consider the generalized confidence intervals. In Subsection \ref{sec64} we obtain higher posterior density (HPD) credible interval using Markov Chain Monte Carlo (MCMC) method. In Section \ref{sec7} we compare the performance of all interval estimators with respect to coverage probability (CP), average length (AL) and probability coverage density (PCD). For implementation purpose in Section \ref{sec8} we present a real life data analysis. Lastly, we have give a conclusion in Section \ref{sec9}.

\section{Improved estimation of $\tau$}\label{sec2}
In this section we derive improved estimators of $\tau$ using the information contained in $(\underline{X},S)$ with respect to a class of location invariant loss function which satisfies the conditions (C1) and (C2). For this reason we consider a class of estimator as
\begin{equation}\label{d1}
	\mathcal{D}_1=\left\{\delta_{\phi}: \delta_{\phi}(\underline{X},S)=\ln \sqrt{V} + \phi(W) \right\}
\end{equation}
where $\phi(\cdot)$ any positive measurable function and $W=\frac{Z_2-Z_1}{\sqrt{nV}}$, $Z_1=\frac{\sqrt{n}(X_1-\mu_{1})}{\sigma} \sim N(0,1)$, $Z_2=\frac{\sqrt{n}(X_2-\mu_{1})}{\sigma}\sim N(\eta,1)$ with $\eta=\frac{\mu}{\sigma}$ and $\mu=\sqrt{n}(\mu_2-\mu_1)$. Under the restriction $\mu_{1}\leq \mu_{2}$, we have $\mu\geq 0$. One can see that the BAEE belongs to the class $\mathcal{D}_1$ corresponding to the choice $\phi(W)=d_0$. In order to derive an improved estimator within the class $\mathcal{D}_1$, we need to analyze the risk function of the estimator $\delta_{\phi}$. Observing that the risk of the estimator $\delta_{\phi}$ is depends on $\mu/\sigma$, we can set $\sigma=1$ without affecting the generality of the analysis. Now the risk function of an estimator of the form (\ref{d1}) is
\begin{align*}
	R(\delta_{\phi},\mu)
	&= E_{\mu}\!\left[L\!\left(\ln(\sqrt{V}) + \phi(W)\right)\right] =E_{\mu}^{W}\!\left[
	E_{\mu}\!\left\{
	L\!\left(\ln(\sqrt{V}) + \phi(W)\right)
	\mid W
	\right\}
	\right].
\end{align*}

To derive an improved estimator we will study the conditional risk $$R_1(\delta_{\phi},\mu)=E_{\mu}\left[L(\ln (\sqrt{V})+\phi(W))|W=w\right].$$ The condition distribution of $V$ given $W=w$ is obtained as 
$$f_{\mu}(v|w)\propto h(w)\exp\left[-\frac{1}{2}\left(v+\frac{1}{2}(\sqrt{nv}w-\mu)^2\right)\right]v^{\frac{2n-3}{2}}, $$ $v>0,$ $w\in \mathbb{R}.$
We have $R_1(\delta_{\phi},\mu)$ is strictly bowl shaped in $\phi(w)$ which is  minimized at $\phi_{\mu}(w)$ which is the unique solution of 
$E\left[L^{\prime}(\ln (\sqrt{V})+\phi(W))|W=w\right]=0$. We have $\frac{f_{\mu}(v|w)}{f_{0}(v|w)}$ is increasing in $v$ for $w>0$. This implies that $E_{\mu}\left[L^{\prime}(\ln (\sqrt{V})+c)|W=w\right] \ge E_{0}\left[L^{\prime}(\ln (\sqrt{V})+c)|W=w\right]$ for all $c \in \mathbb{R}$. So we have for $\mu>0$
\begin{align*}
	E_{\mu}\!\left[L'\!\left(\ln(\sqrt{V})+\phi_{0}(w)\right)\mid W=w\right]  \ge& E_{0}\!\left[L'\!\left(\ln(\sqrt{V})+\phi_{0}(w)\right)\mid W=w\right]\\
   = &0 \\
 = &E_{\eta}\!\left[L'\!\left(\ln(\sqrt{V})+\phi_{\mu}(w)\right)\mid W=w\right].
\end{align*}
This proves that $\phi_{\mu}(w) \le \phi_{0}(w)$, where $E_{0}\left[L^{\prime}(\ln (\sqrt{V})+\phi_{0}(w))|W=w\right]=0$, that is 
\begin{eqnarray}
	\int_{0}^{\infty}
	L'\!\left(\ln\sqrt{v}+\phi_{0}(w)\right)
	\exp\!\left[
	-\frac{1}{2}v\!\left(1+\frac{nw^2}{2}\right)
	\right]  v^{\frac{2n-3}{2}} \, dv
	= 0.
\end{eqnarray}
Take the transformation $u=v\left(1+\frac{nw^2}{2}\right)$, we have  $$EL^{\prime}\left(\ln \sqrt{U}-\ln \sqrt{1+\frac{nw^2}{2}}+\phi_{0}(w)\right)=0$$ where $U\sim Gamma (\frac{2n-1}{2},2)$ random variable. Suppose $m_0$ is unique solution of  $EL^{\prime}\left(\ln \sqrt{U}+m_0\right)=0$ then we have $\phi_{0}(w)=m_0 +\ln \sqrt{1+\frac{nw^2}{2}}$. Consider a function 
\begin{equation}\label{st0}
	\phi^*_{0}(w)=\left\{
	\begin{array}{ll}
		\min\left\{d_0,m_0+\ln \sqrt{1+\frac{nw^2}{2}}\right\},~~~w>0\\
		d_0, ~~~~~~~~~~~~~~~~~~~~~~~~~~~~~~~~~~\mbox{otherwise}
	\end{array}
	\right.
\end{equation}
Now $\phi_{\mu}(w) \le \phi^*_{0}(w) \le d_0$ and the later inequality is strict on a set of positive probability provided $d_0>m_0$. Since the risk function is strictly increasing in $(\phi_{\mu}(w), \infty)$.  So $R_1(\delta_{\phi^*_{0}},\mu) \le R_1(\delta_{0},\mu)$ and strict inequality holds provided $d_0>m_0$. Again for $w<0$, we have $\frac{f_{\mu}(v|w)}{f_{0}(v|w)}$ is decreasing in $v$ this gives us 
$\phi_{\mu}(w) \ge \phi_{0}(w)$. Consider a function 
\begin{equation}\label{st1}
	\phi^{**}_{0}(w)=\left\{
	\begin{array}{ll}
		\max\left\{d_0,m_0+\ln \sqrt{1+\frac{nw^2}{2}}\right\},~w<0\\
		d_0, ~~~~~~~~~~~~~~~~~~~~~~~~~~~~~~~~\mbox{otherwise}
	\end{array}
	\right.
\end{equation}
and similarly we have $R_1(\delta_{\phi^*_{0}},\mu) < R_1(\delta_{0},\mu)$ for all $\mu>0$ and for all $w<0$. 
We state this result as follows: 
\begin{theorem}
	The estimator 
	\begin{equation}\label{st2}
		\delta_{S}=\left\{
		\begin{array}{ll}
			\ln \sqrt{V} + \max\left\{d_0,m_0+\ln\sqrt{1+\frac{nW^2}{2}}\right\},W<0\\
			\ln \sqrt{V} + \min\left\{d_0,m_0+\ln \sqrt{1+\frac{nW^2}{2}}\right\},W>0
		\end{array}
		\right.
	\end{equation}
	dominates $\delta_0$ with respect to a location invariant bowl shaped loss function $L(t)$ provided $d_0>m_0$, where the random variable $U\sim Gamma (\frac{2n-1}{2},2)$ and $m_0$ is unique solution of  $EL^{\prime}\left(\ln \sqrt{U}+m_0\right)=0$.
\end{theorem}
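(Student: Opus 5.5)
The plan is a Brewster–Zidek/Stein-type conditional risk argument, carried out separately on the events $\{W>0\}$ and $\{W<0\}$. Since the risk of any $\delta_\phi\in\mathcal{D}_1$ depends on the parameters only through $\eta=\mu/\sigma\ge 0$, I will set $\sigma=1$ and write
\[
R(\delta_\phi,\mu)=E^{W}_{\mu}\bigl[R_1(\delta_\phi,\mu)\bigr].
\]
It then suffices to show that for every fixed $w\neq 0$ and every $\mu\ge 0$ we have $R_1(\delta_S,\mu)\le R_1(\delta_0,\mu)$, with strict inequality on a $w$-set of positive probability whenever $d_0>m_0$. Integrating over the distribution of $W$ gives the domination.

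The first step uses the conditional density $f_\mu(v\mid w)$ displayed above. By (C1) and (C2), the map $c\mapsto E_\mu[L(\ln\sqrt V+c)\mid W=w]$ is strictly convex. Its derivative $E_\mu[L'(\ln\sqrt V+c)\mid W=w]$ is therefore strictly increasing in $c$ and vanishes at a unique point $\phi_\mu(w)$, so $R_1$ is strictly decreasing on $(-\infty,\phi_\mu(w))$ and strictly increasing on $(\phi_\mu(w),\infty)$. Next I check the monotone likelihood ratio property. The ratio is
\[
\frac{f_\mu(v\mid w)}{f_0(v\mid w)}\propto \exp\!\left(\tfrac12\mu\sqrt{n v}\,w\right),
\]
up to factors free of $v$. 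This is increasing in $v$ for $w>0$ and decreasing for $w<0$. Because $L'(\ln\sqrt v+c)$ is increasing in $v$, the standard MLR/covariance inequality gives $E_\mu[L'(\cdot)\mid w]\ge E_0[L'(\cdot)\mid w]$ for $w>0$, with the reverse inequality for $w<0$. Hence $\phi_\mu(w)\le\phi_0(w)$ for $w>0$ and $\phi_\mu(w)\ge\phi_0(w)$ for $w<0$. The substitution $u=v(1+nw^2/2)$ gives $\phi_0(w)=m_0+\ln\sqrt{1+nw^2/2}$, as computed before the statement.

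The second step is the comparison. For $w>0$, the truncated value $\min\{d_0,\phi_0(w)\}$ lies in $[\phi_\mu(w),d_0]$. Since $R_1$ is increasing to the right of $\phi_\mu(w)$, we get $R_1(\delta_S)\le R_1(\delta_0)$. For $w<0$, the value $\max\{d_0,\phi_0(w)\}$ lies in $[d_0,\phi_\mu(w)]$ whenever $d_0\le\phi_\mu(w)$. There $R_1$ is decreasing, so again $R_1(\delta_S)\le R_1(\delta_0)$. If instead $d_0>\phi_0(w)$, the max equals $d_0$ and there is no change.

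For strictness, suppose $d_0>m_0$. Then $\phi_0(w)<d_0$ exactly on the set $|w|<\sqrt{2(e^{2(d_0-m_0)}-1)/n}$, which has positive probability. On $\{w>0\}\cap$ this set, $\delta_S$ moves strictly toward $\phi_\mu(w)$ without passing it, so the inequality is strict by strict convexity.

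The main obstacle is the $w<0$ branch. As written, $\max\{d_0,\phi_0(w)\}$ differs from $d_0$ only where $\phi_0(w)>d_0$, and there I need $\phi_0(w)\le\phi_\mu(w)$, which the MLR reversal supplies. I would verify carefully that $\phi_0(w)\le\phi_\mu(w)$ places $\max\{d_0,\phi_0\}$ between $d_0$ and $\phi_\mu$ in every configuration, namely $d_0$ below or above $\phi_\mu(w)$. In particular, when $d_0>\phi_\mu(w)\ge\phi_0(w)$ the max is $d_0$, so there is no harm. This case check, together with justifying the MLR inequality via differentiation under the integral sign (C2), is where care is needed. Strictness of the overall domination can then rest on the $w>0$ part alone.
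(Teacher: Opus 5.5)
Your proposal is correct and follows essentially the same route as the paper. Both arguments use the conditional risk given $W=w$, the likelihood ratio $\exp(\tfrac12\mu\sqrt{nv}\,w)$ giving $\phi_\mu(w)\le\phi_0(w)$ for $w>0$ and $\phi_\mu(w)\ge\phi_0(w)$ for $w<0$, the substitution $u=v(1+nw^2/2)$ yielding $\phi_0(w)=m_0+\ln\sqrt{1+nw^2/2}$, and truncation at $d_0$ on each half-line. Your case split for $w<0$ is more careful than the paper's bare assertion $\phi_\mu(w)\le\phi^*_0(w)\le d_0$. Apply the same split for $w>0$: when $\min\{d_0,\phi_0(w)\}=d_0$ it need not lie in $[\phi_\mu(w),d_0]$, but the truncation is then inactive, so the conclusion still holds.
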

A general form of the above theorem can be proved similarly. We state the result with out proof. 
\begin{theorem}\label{stth1}
	The estimator 
	\begin{equation}\label{st}
		\delta_{S}=\left\{
		\begin{array}{ll}
			\ln \sqrt{V} + \max\left\{\phi(W), m_0+\ln \sqrt{1+\frac{nW^2}{2}}\right\}, W<0\\
			\ln \sqrt{V} + \min\left\{\phi(W), m_0+\ln \sqrt{1+\frac{nW^2}{2}}\right\}, W>0
		\end{array}
		\right.
	\end{equation}
	dominates $\delta_{\phi}$ with respect to a location invariant bowl shaped loss function $L(t)$.
\end{theorem}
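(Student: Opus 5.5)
We follow the Brewster--Zidek argument used above for $\delta_S$ with $\phi\equiv d_0$, now for a general $\phi$. Since the risk of any $\delta_\phi\in\mathcal{D}_1$ depends on the parameters only through $\eta=\mu/\sigma\ge 0$, we may set $\sigma=1$. We write
\[
R(\delta_\phi,\mu)=E^{W}_{\mu}\bigl[R_1(\delta_\phi,\mu)\bigr],\qquad R_1(\delta_\phi,\mu)=E_\mu\bigl[L(\ln\sqrt V+\phi(W))\mid W=w\bigr].
\]
It then suffices to show, for each fixed $w$ and each $\mu\ge 0$, that the conditional risk at the truncated value $\phi^{T}(w)$ (the min for $w>0$, the max for $w<0$) does not exceed the conditional risk at $\phi(w)$.

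The first step is the conditional structure. By (C1) and (C2), $c\mapsto E_\mu[L(\ln\sqrt V+c)\mid W=w]$ is strictly convex, so it has a unique minimizer $\phi_\mu(w)$ solving $E_\mu[L'(\ln\sqrt V+c)\mid W=w]=0$. It is decreasing to the left of $\phi_\mu(w)$ and increasing to the right. From the conditional density
\[
f_\mu(v\mid w)\propto \exp\!\Bigl[-\tfrac12\bigl(v+\tfrac12(\sqrt{nv}\,w-\mu)^2\bigr)\Bigr]v^{\frac{2n-3}{2}},
\]
the ratio $f_\mu(v\mid w)/f_0(v\mid w)\propto \exp\bigl(\tfrac{1}{2}\mu w\sqrt{nv}\bigr)$ is increasing in $v$ for $w>0$ and decreasing for $w<0$ whenever $\mu>0$. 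Since $L'$ is increasing, the monotone likelihood ratio gives stochastic ordering. Hence $E_\mu[L'(\ln\sqrt V+c)\mid w]\ge E_0[L'(\ln\sqrt V+c)\mid w]$ for $w>0$, and the reverse for $w<0$. As shown above, the substitution $u=v(1+nw^2/2)$ gives $\phi_0(w)=m_0+\ln\sqrt{1+nw^2/2}$. Therefore $\phi_\mu(w)\le\phi_0(w)$ for $w>0$ and $\phi_\mu(w)\ge\phi_0(w)$ for $w<0$; when $\mu=0$ these are equalities.

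The second step is the comparison itself, which needs only this sandwich. For $w>0$, let $\phi^T(w)=\min\{\phi(w),\phi_0(w)\}$.
\begin{itemize}
\item[(a)] If $\phi(w)\le\phi_0(w)$, the two estimators coincide at this $w$.
\item[(b)] If $\phi(w)>\phi_0(w)$, then $\phi_\mu(w)\le\phi_0(w)=\phi^T(w)<\phi(w)$. Both values lie in the region where the conditional risk is strictly increasing, so $R_1$ at $\phi^T(w)$ is strictly smaller than at $\phi(w)$.
\end{itemize}
The case $w<0$ is the mirror image: there $\phi^T=\max\{\phi,\phi_0\}$ and $\phi(w)<\phi_0(w)\le\phi_\mu(w)$, where the conditional risk is strictly decreasing. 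At $w=0$ nothing changes. Integrating over $W$ gives $R(\delta_S,\mu)\le R(\delta_\phi,\mu)$ for all $\mu\ge0$. The inequality is strict provided $P_\mu\bigl(\{W>0,\ \phi(W)>\phi_0(W)\}\cup\{W<0,\ \phi(W)<\phi_0(W)\}\bigr)>0$. This event does not depend on $\mu$ in a way that matters, because $W$ has positive density on $\mathbb{R}$ for every $\mu$.

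The main obstacle is a precise reading of ``dominates''. If $\phi$ already satisfies $\phi\le\phi_0$ on $w>0$ and $\phi\ge\phi_0$ on $w<0$ almost everywhere, the two estimators coincide and there is no strict improvement. So I would state the theorem with the proviso that this set has positive Lebesgue measure, which is the analogue of $d_0>m_0$ in the preceding theorem. A second point to verify is measurability of $\phi^T$, which is immediate since it is a min or max of measurable functions. A third is that the interchange of differentiation and conditional expectation, needed for the monotonicity of $R_1$ in $c$, is covered by (C2).
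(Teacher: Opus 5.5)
Your proposal is correct and follows the route the paper intends. The paper omits the proof and says only that it goes ``similarly'' to the $\phi\equiv d_0$ case: the Stein/Brewster--Zidek conditional-risk argument, which uses the monotone ratio $f_\mu(v\mid w)/f_0(v\mid w)$ to get $\phi_\mu(w)\le\phi_0(w)=m_0+\ln\sqrt{1+nw^2/2}$ for $w>0$ (reversed for $w<0$) and then truncates toward $\phi_0$. Your case split is a bit tighter than the paper's claim $\phi_\mu\le\phi_0^*$, which can fail where the truncation is inactive but is not needed. Your proviso is a genuine correction to the statement as written: strict domination requires the event $\{W>0,\phi(W)>\phi_0(W)\}\cup\{W<0,\phi(W)<\phi_0(W)\}$ to have positive Lebesgue measure.
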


We give the following examples as an application of the above result, where dominating estimators are obtained over the BAEE under the loss function $L_1(t)$ and $L_2(t)$ respectively. 
\begin{example}
	For the quadratic loss function $L_1(t)$ we have, $m_0=-\frac{1}{2}\left[\psi\left(\ln2+\frac{2n-1}{2}\right)\right]$. The improved estimator of $\tau$ over BAEE is obtained as 
\begin{equation*}
	\delta_{S1} =
	\begin{cases}
		\ln\sqrt{V}
		+ \max\Big\{
		-\tfrac12[\ln2+\psi(n-1)], \\
		-\tfrac12\!\left[\ln2+\psi\!\left(\tfrac{2n-1}{2}\right)\right]
		+ \ln\sqrt{1+\tfrac{nW^2}{2}}
		\Big\}, W<0, \\
		
		\ln\sqrt{V}
		+ \min\Big\{
		-[\ln2+\psi(n-1)], \\
		-\tfrac12\!\left[\ln2+\psi\!\left(\tfrac{2n-1}{2}\right)\right]
		+ \ln\sqrt{1+\tfrac{nW^2}{2}}
		\Big\}, W>0.
	\end{cases}
\end{equation*}
\end{example}
\begin{example}
	Under the linux loss function $L_2(t)$ we get, $m_0= -\frac{1}{a_1}\ln\left(2^{\frac{a_1}{2}}\frac{\Gamma\left(\frac{2n+a_1-1}{2}\right)}{\Gamma\left(\frac{2n-1}{2}\right)}\right)$. The improved estimator of $\tau$ over BAEE is obtained as 
\begin{equation*}
	\delta_{S2} =
	\begin{cases}
		\ln\sqrt{V}
		+ \max\Bigg\{
		-\tfrac{1}{a_1}\ln\!\left(2^{\tfrac{a_1}{2}}
		\frac{\Gamma\!\left(\tfrac{2n+a_1-2}{2}\right)}{\Gamma(n-1)}\right), \\
		-\tfrac{1}{a_1}\ln\!\left(2^{\tfrac{a_1}{2}}
		\frac{\Gamma\!\left(\tfrac{2n+a_1-1}{2}\right)}
		{\Gamma\!\left(\tfrac{2n-1}{2}\right)}\right)
		+ \ln\sqrt{1+\tfrac{nW^2}{2}}
		\Bigg\}, ~W<0, \\
		
	\ln\sqrt{V}
	+ \min\Bigg\{
	-\tfrac{1}{a_1}\ln\!\left(
	2^{\tfrac{a_1}{2}}
	\frac{\Gamma\!\left(\tfrac{2n+a_1-2}{2}\right)}
	{\Gamma(n-1)}
	\right), \\
	-\tfrac{1}{a_1}\ln\!\left(
	2^{\tfrac{a_1}{2}}
	\frac{\Gamma\!\left(\tfrac{2n+a_1-1}{2}\right)}
	{\Gamma\!\left(\tfrac{2n-1}{2}\right)}
	\right)
	+ \ln\sqrt{1+\tfrac{nW^2}{2}}
	\Bigg\},~ W>0.
	\end{cases}
\end{equation*}
\end{example}
%####################################################################################
\subsection{Restricted MLE}
The unrestricted Maximum likelihood estimator (MLE) of $\sigma^2$ is $T_{ML}=\frac{S^2}{2n}$ and the restricted MLE of $\sigma^2$ is given as (\cite{gupta1992pitman})
\begin{equation}\label{equ2.3}
	T_{RML}=\left\{
	\begin{array}{ll}
		\frac{S^2}{2n},~~~~~~~~~~~~~~~~~~~~~~\mbox{ if }~~~X_1 \le X_2
		\vspace{.2 cm}\\\\
		\frac{S^2}{2n}+\frac{1}{4}(X_1-X_2)^2,~~\mbox{ if }~~~X_1>X_2.
	\end{array}
	\right.
\end{equation}
Now using invariance property of MLE, we have unrestricted MLE of $\tau$ is 
$\delta_{ML}=\ln S -\ln \sqrt{2n}$. The restricted MLE is obtained as 
\begin{equation}\label{equ2.3}
	\delta_{RML}=\left\{
	\begin{array}{ll}
		\ln S+\ln\sqrt{\frac{1}{2n}},~~~~~~~~~~~~~~~~\mbox{ if } ~~W \ge 0
		\vspace{.2 cm}\\\\
		\ln \left(\frac{S}{\sqrt{2n}}\right)+\ln \sqrt{\left(1+\frac{2n}{4}W^2\right)},\mbox{ if } W<0.
	\end{array}
	\right.
\end{equation}
%\begin{remark}
%	Using the Jensen's inequality on can show that $\psi(2(n-1))<\ln(2n)$. This proves that the unrestricted MLE underestimates $\ln \sigma$.
%\end{remark}
As an application of Theorem \ref{stth1} we have the following result. 
\begin{theorem}
	For estimating $\ln \sigma$ with respect to a location invariant loss function the estimators 
	\begin{equation*}\label{imle}
		\delta_{IML}=\left\{
		\begin{array}{ll}
			\ln S+ \max\Big\{-\tfrac12\ln(2n),\ m_0 \ + ln \sqrt{1+\frac{nW^2}{2}}\Big\},W<0\\
			\ln S+ \min\Big\{-\tfrac12\ln(2n),\ m_0 \ + \ln \sqrt{1+\frac{nW^2}{2}}\Big\},W>0
		\end{array}
		\right.
	\end{equation*}
	and 
	\begin{equation}\label{irmle}
		\delta_{IRML}=\left\{
		\begin{array}{ll}
			\ln S+ \max\Big\{\ln \sqrt{\frac{1}{2n}\left(1+\frac{2n}{4}W^2\right)},\ m_0 + \ln \sqrt{1+\frac{nW^2}{2}}\Big\},W<0\\
			\ln S+ \min\Big\{\ln\sqrt{1/2n},\  m_0 \ + \ln \sqrt{1+\frac{nW^2}{2}}\Big\}, W>0
		\end{array}
		\right.
	\end{equation}
	dominates $\delta_{ML}$ and $\delta_{RML}$ respectively. 
\end{theorem}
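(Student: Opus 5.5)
The plan is to write both $\delta_{ML}$ and $\delta_{RML}$ as members of the class $\mathcal{D}_1$ and then apply Theorem \ref{stth1} with the matching $\phi$. Since $\sigma=1$ may be assumed without loss of generality, $S=\sqrt{V}$, so $\ln S$ and $\ln\sqrt{V}$ play the same role. We then have $\delta_{ML}=\ln\sqrt{V}+\phi_{ML}(W)$ with $\phi_{ML}(w)\equiv-\tfrac12\ln(2n)$. For the restricted MLE, note that $X_1>X_2$ is equivalent to $W<0$, because $W=(Z_2-Z_1)/\sqrt{nV}$ has the sign of $X_2-X_1$. Also $\tfrac14(X_1-X_2)^2\cdot\tfrac{2n}{S^2}=\tfrac{n}{2}\cdot\tfrac{(X_1-X_2)^2}{S^2}$, which should be checked to equal $\tfrac{2n}{4}W^2$ in the paper's normalization. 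This gives $\delta_{RML}=\ln\sqrt{V}+\phi_{RML}(W)$ with $\phi_{RML}(w)=\ln\sqrt{\tfrac{1}{2n}(1+\tfrac{2n}{4}w^2)}$ for $w<0$ and $\phi_{RML}(w)=\ln\sqrt{1/(2n)}$ for $w\ge 0$. Both functions are measurable and depend on the data only through $W$.

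Next, substitute these $\phi$ into the estimator (\ref{st}) of Theorem \ref{stth1}. With $\phi=\phi_{ML}$ the result is exactly $\delta_{IML}$, and with $\phi=\phi_{RML}$ it is exactly $\delta_{IRML}$. Theorem \ref{stth1} then gives $R(\delta_{IML},\mu)\le R(\delta_{ML},\mu)$ and $R(\delta_{IRML},\mu)\le R(\delta_{RML},\mu)$ for all $\mu\ge0$.

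To justify Theorem \ref{stth1}, which is stated without proof, I would repeat the conditional-risk argument of the preceding theorem pointwise in $w$. For $w>0$, the monotone likelihood ratio of $f_\mu(v|w)/f_0(v|w)$ in $v$ gives $\phi_\mu(w)\le\phi_0(w)=m_0+\ln\sqrt{1+nw^2/2}$. The conditional risk is strictly convex in the value of $\phi(w)$ and minimized at $\phi_\mu(w)$, so it is increasing to the right of $\phi_\mu(w)$. Replacing $\phi(w)$ by $\min\{\phi(w),\phi_0(w)\}$ therefore moves it toward $\phi_\mu(w)$ without crossing it, and the conditional risk does not increase. The case $w<0$ is symmetric: there $\phi_\mu(w)\ge\phi_0(w)$, and $\max\{\phi(w),\phi_0(w)\}$ is used. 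Integrating over $W$ gives the domination.

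The main obstacle is strict domination, that is, showing the replacement changes the estimator on a set of positive probability. For $\delta_{ML}$ on $W>0$ this needs $-\tfrac12\ln(2n)>m_0+\ln\sqrt{1+nw^2/2}$ for small $w$, i.e. $-\tfrac12\ln(2n)>m_0$. This should be verified for the specific loss, for example via $\psi(\tfrac{2n-1}{2})<\ln(2n-1)$ under $L_1$. If it fails, at least one of the two branches ($W<0$ or $W>0$) must still provide a positive-probability region of change, and I would check the $W<0$ branch for $\delta_{RML}$ in the same way. Otherwise the conclusion is only domination in the weak sense.
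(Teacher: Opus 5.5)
Your proposal is correct and follows the paper's route: the paper obtains this theorem simply as an application of Theorem \ref{stth1} with $\phi\equiv-\tfrac12\ln(2n)$ and $\phi=\phi_{RML}$, and your conditional-MLR sketch of Theorem \ref{stth1} is the argument the paper gives for the preceding theorem. On strictness, which the paper does not address, your proposed $L_1$ check goes the wrong way, since $\psi(n-\tfrac12)<\ln n$ gives $m_0>-\tfrac12\ln(2n)$; nevertheless $\delta_{IML}$ strictly beats $\delta_{ML}$ through the $W<0$ branch because $m_0+\ln\sqrt{1+nW^2/2}\to\infty$, whereas on $W<0$ the two candidates in $\delta_{IRML}$ differ only by the constant $m_0+\tfrac12\ln(2n)$, so $\delta_{IRML}\equiv\delta_{RML}$ when $m_0=-\tfrac12\ln(2n)$ (e.g.\ linex loss with a suitable $a_1\in(4,6)$) and in general only weak domination holds there.
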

\section{Smooth improved estimator}\label{sec3}
In this section we will find an smooth estimator of $\tau$ that improves upon the BAEE $\delta_{0}$ under the restriction $\mu_{1}\leq \mu_{2}$. To obtain smooth improved estimator of $\tau$, we have to analyze the conditional risk function given by 
\begin{equation}\label{eq4.1}
	\mathcal{H}_{\mu}(r)=E_{\mu}\left[L(\ln \sqrt{v} + r) \vert |W|\leq \alpha\right], \ r>0,
\end{equation}
where $\alpha$ is a positive real number. From (\ref{eq4.1}), we have
\begin{equation}\label{eq4.3}
	\mathcal{H}_{\mu}(r)=\int_{0}^{\infty}L(\ln \sqrt{v} + r)f_{\mu}(v,\alpha)dv,
\end{equation}
where
\begin{equation*}
	f_{\mu}(v,\alpha) \propto v^{\frac{2n-3}{2}} e^{-\frac{v}{2}}\int_{-\alpha}^{\alpha}e^{-\frac{1}{4}\left(\sqrt{nv}w - \mu\right)^2}dw.
\end{equation*}
Let $z=\ln \sqrt{v}$. Then we get the density of $Z$ as
\begin{equation}
	f_{\mu}(z,\alpha) \propto e^{(2n-1)z} e^{-\frac{1}{2}e^{2z}}\int_{-\alpha}^{\alpha}e^{-\frac{1}{4}\left(\sqrt{n}e^zw - \mu\right)^2}dw.
\end{equation}
Thus (\ref{eq4.3}) can be rewritten as 
\begin{equation}
	\mathcal{H}_{\mu}(r)=\int_{-\infty}^{\infty}L(z + r)f_{\mu}(z,\alpha)dz.
\end{equation}
Let we denote $r_{\mu}(\alpha)$ be the minimizer of $\mathcal{H}_{\mu}(r)$ which implies that $r_0(\alpha)$ is the minimizer of $\mathcal{H}_{0}(r)$. In the next lemma, we obtain some characteristic of the function $r_{\mu}(\alpha)$, where $\mu \in \mathbb{R}$ and $\alpha>0$.

\begin{lemma} \label{lmm4.1}
	\begin{enumerate}
		\item [(i)] $\mathcal{H}_{\mu}(r)$ is strictly bowl shaped function of $r$ for each $\alpha>0$;
		\item[(ii)] $r_{\mu}(\alpha)\leq r_0(\alpha)$;
		\item[(iii)] $r_0(\alpha)$ is increasing in $\alpha$.
	\end{enumerate}
\end{lemma}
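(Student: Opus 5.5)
The plan is to treat all three parts as monotone likelihood ratio arguments in the variable $z=\ln\sqrt{v}$, combined with the strict convexity of $L$ from (C1).

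\emph{Part (i).} By (C1) and (C2) we may differentiate under the integral sign, which gives
\[
\mathcal{H}_{\mu}''(r)=\int_{-\infty}^{\infty}L''(z+r)f_{\mu}(z,\alpha)\,dz>0
\]
(or, without twice-differentiability, strict convexity of $r\mapsto L(z+r)$ integrated against a positive density). Hence $\mathcal{H}_{\mu}$ is strictly convex in $r$. It remains to show that it has a unique minimiser $r_{\mu}(\alpha)$, which is equivalent to $\mathcal{H}_{\mu}'(r)=\int L'(z+r)f_{\mu}(z,\alpha)\,dz$ changing sign. Since $L$ is strictly convex with $L(0)=0$ and the loss is bowl shaped, $L'(t)<0$ for $t$ small and $L'(t)>0$ for $t$ large. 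By monotone convergence, $\mathcal{H}_{\mu}'(r)$ is therefore negative as $r\to-\infty$ and positive as $r\to\infty$. So $r_{\mu}(\alpha)$ is the unique root of $\mathcal{H}_{\mu}'(r)=0$, and $\mathcal{H}_{\mu}$ is strictly bowl shaped.

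\emph{Part (ii).} I would follow the pattern of Section~\ref{sec2} and show that $f_{\mu}(z,\alpha)/f_{0}(z,\alpha)$ is increasing in $z$ for $\mu\ge0$. Writing $t=\sqrt{n}e^{z}$, this ratio equals
\[
\frac{\int_{-\alpha}^{\alpha}e^{-\frac14(tw-\mu)^2}\,dw}{\int_{-\alpha}^{\alpha}e^{-\frac14 t^2w^2}\,dw}
=e^{-\mu^2/4}\,E\!\left[e^{\frac{\mu}{2}tW}\right],
\]
where $W$ has density proportional to $e^{-t^2w^2/4}$ on $[-\alpha,\alpha]$. Substituting $s=tw$ turns this into an expectation of $e^{\mu s/2}$ over a truncated $N(0,2)$ on $[-\alpha t,\alpha t]$. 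This is a symmetric family whose support widens with $t$, so the expectation of $\cosh(\mu s/2)$ increases in $t$. I would make this rigorous by noting that the conditional density of $|s|$ given $|s|\le \alpha t$ is stochastically increasing in $t$ and that $\cosh$ is increasing in $|s|$. Once the ratio is known to be increasing in $z$, the fact that $L'(z+r)$ is increasing in $z$ gives, by the standard MLR covariance inequality,
\[
\mathcal{H}_{\mu}'(r)\ge \mathcal{H}_0'(r)\quad\text{for every } r.
\]
Evaluating at $r=r_0(\alpha)$ yields $\mathcal{H}_{\mu}'(r_0(\alpha))\ge 0$. By part (i), $\mathcal{H}_{\mu}'$ is increasing with root $r_{\mu}(\alpha)$, so $r_{\mu}(\alpha)\le r_0(\alpha)$. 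I expect this monotonicity of the ratio in $z$ to be the main obstacle, and the symmetrisation to $\cosh$ is what I would try first.

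\emph{Part (iii).} For $\alpha_1<\alpha_2$ I would show that $f_0(z,\alpha_2)/f_0(z,\alpha_1)$ is decreasing in $z$. With $\mu=0$, substituting $s=\sqrt{n}e^{z}w$ gives
\[
\int_{-\alpha}^{\alpha}e^{-ne^{2z}w^2/4}\,dw=\frac{1}{\sqrt{n}\,e^{z}}\,G\!\left(\sqrt{n}\,e^{z}\alpha\right),
\qquad G(x)=\int_{-x}^{x}e^{-s^2/4}\,ds .
\]
The ratio is therefore $G(c\alpha_2)/G(c\alpha_1)$ with $c=\sqrt{n}e^{z}$. Its monotonicity in $c$ follows from $x\,G'(x)/G(x)$ being decreasing in $x$, a log-concavity-type property of the normal integral. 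Then the same covariance argument gives $\mathcal{H}_0'(r;\alpha_2)\le\mathcal{H}_0'(r;\alpha_1)$. Evaluating at $r=r_0(\alpha_1)$ gives $\mathcal{H}_0'(r_0(\alpha_1);\alpha_2)\le 0$, and hence $r_0(\alpha_2)\ge r_0(\alpha_1)$.

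The check that $xG'(x)/G(x)$ decreases should be routine: it reduces to showing that $\frac{x e^{-x^2/4}}{\int_0^x e^{-s^2/4}ds}$ decreases, and differentiating this directly should settle it.
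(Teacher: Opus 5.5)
Your proof is correct. Parts (ii) and (iii) follow the paper's argument; part (i) takes a different and sounder route.

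In (ii) and (iii) the paper also proves that $f_\mu(z,\alpha)/f_0(z,\alpha)$ and $f_0(z,\alpha_1)/f_0(z,\alpha_2)$ are increasing in $z$, and then applies the monotone likelihood ratio inequality to $L'(z+r)$. For (ii) it symmetrises the integrand to $e^{-\sqrt{n}y\mu}+e^{\sqrt{n}y\mu}$ under a truncated normal whose support grows with $z$, as you do. For (iii) it shows that the truncated mean $Q(\alpha)$ of $nW^2$ increases in $\alpha$. This is equivalent to your $xG'(x)/G(x)$ being decreasing. Your calculus check works, since $\int_0^x e^{-s^2/4}ds\big/(xe^{-x^2/4})=\int_0^1 e^{x^2(1-u^2)/4}du$ is increasing in $x>0$.

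The real difference is (i). The paper does not use convexity of $L$ there. It asserts that the location family $\{f_\mu(z-r,\alpha)\}$ has monotone likelihood ratio, i.e.\ that $f_\mu(\cdot,\alpha)$ is log-concave in $z$, deduces this from Lemma~\ref{aplm}, and cites Brewster and Zidek. In principle that route would also cover bowl-shaped but non-convex losses. However, the log-concavity it needs is false in general:
\begin{itemize}
\item[(a)] Lemma~\ref{aplm} infers that $tQ'(t)/Q(t)$ decreases from log-concavity of $Q$, which does not follow.
\item[(b)] For $\mu>\sqrt{2}$ the symmetrised integrand $e^{-s^2/4}\cosh(\mu s/2)$ is bimodal. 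One finds $(\ln f_\mu)''(z)\sim e^{2z}\bigl[(\mu^2-2)n\alpha^2/6-2\bigr]$ as $z\to-\infty$, which is positive once $(\mu^2-2)n\alpha^2>12$.
\end{itemize}
Your argument uses only (C1): strict convexity of $r\mapsto\int L(z+r)f_\mu(z,\alpha)\,dz$, plus the sign change of $\mathcal{H}_\mu'$. It avoids this problem entirely and gives the cleaner proof of (i).
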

\noindent\textit{Proof:}
\begin{enumerate}
	\item [(i)] To prove the function $\mathcal{H}_{\mu}(r)$ is strictly bowl shaped, we need to show that $\left\{f_{\mu}(z-r,\alpha),\ r\in (-\infty,\infty)\right\}$ is MLR increasing (see Lemma $2.1$ of \cite{brewster1974improving}). For any $r_1<r_2$, we have 
	
	\begin{eqnarray*}
		R(z)
		&=&
		\frac{f_{\mu}(z-r_2,\alpha)}
		{f_{\mu}(z-r_1,\alpha)} \\[6pt]
		&=&
		e^{(2n-1)(r_1-r_2)
			-\frac{1}{2}\left(e^{2(z-r_2)}-e^{2(z-r_1)}\right)}
		\nonumber \\[6pt]
		&& {}\times
		\frac{\displaystyle
			\int_{-\alpha}^{\alpha}
			e^{-\frac{1}{4}\left(\sqrt{n}e^{z-r_2}w-\mu\right)^2}\,dw}
		{\displaystyle
			\int_{-\alpha}^{\alpha}
			e^{-\frac{1}{4}\left(\sqrt{n}e^{z-r_1}w-\mu\right)^2}\,dw}.
	\end{eqnarray*}
	 Since $r_1<r_2$ then the first term is positive and increasing and the second term is increasing follows from the Lemma \ref{aplm}.
	
	\item[(ii)] For $\mu >0$, to prove $r_{\mu}(\alpha)\leq r_0(\alpha)$, it is sufficient to prove the function $\frac{f_{\mu(z,\alpha)}}{f_0(z,\alpha)}$ is increasing in $z$. Now, we have		
	\begin{eqnarray*}
		\frac{f_{\mu}(z,\alpha)}{f_{0}(z,\alpha)}
		&\propto&
		\frac{\int_{-\alpha}^{\alpha}
			e^{-\tfrac14(\sqrt{n}e^{z}w-\mu)^2}dw}
		{\int_{-\alpha}^{\alpha}
			e^{-\tfrac14 ne^{2z}w^2}dw}
		\nonumber \\
		&\propto&
		\frac{\displaystyle
			\int_{0}^{\alpha}
			e^{-\tfrac14(\sqrt{n}e^{z}w+\mu)^2}\,dw}
		{\displaystyle
			\int_{0}^{\alpha}
			e^{-\tfrac14 ne^{2z}w^2}\,dw} +\frac{\displaystyle
			\int_{0}^{\alpha}
			e^{-\tfrac14(\sqrt{n}e^{z}w-\mu)^2}\,dw}
		{\displaystyle
			\int_{0}^{\alpha}
			e^{-\tfrac14 ne^{2z}w^2}\,dw}
		\nonumber \\[10pt]
		&\propto&
		\frac{\int_{0}^{e^{z}\alpha/2}
			e^{-ny^2}
			\left[e^{-\sqrt{n}y\mu}
			+e^{\sqrt{n}y\mu}\right]dy}
		{\int_{0}^{e^{z}\alpha/2}
			e^{-ny^2}dy}
		\nonumber \\
		&\propto&
		E_z\!\left[e^{-\sqrt{n}Y\mu}
		+e^{\sqrt{n}Y\mu}\right].
	\end{eqnarray*}
	where $y=e^zw/2$ and the expectation is taken with the respect to the random variable $Y$ with the density function $g_{z}(y) = \frac{e^{-\frac{1}{2}ny^2}}{\int_{0}^{e^z\alpha}e^{-\frac{1}{4}ny^2}dy }$, $0\leq y\leq \alpha e^z/2$. Furthermore for $z_1> z_2 >0$, $\frac{g_{z_1(y)}}{g_{z_2}(y)}$ is non decreasing in $y$. Thus using the lemma from \cite{lehmann2005testing}, we obtain the result.
	\item[(iii)] Let suppose that $0<\alpha_1<\alpha_2$. To show $r_0(\alpha_1)\leq r_0(\alpha_2)$ it is enough to prove that the function $\frac{f_0(z,\alpha_1)}{f_0(z,\alpha_2)}$ is increasing in $z$.
	\begin{equation}
		\label{qz}
		\begin{aligned}
			q(z)
			= \frac{f_0(z,\alpha_1)}{f_0(z,\alpha_2)} = \frac{\int_{-\alpha_1}^{\alpha_1} e^{-\tfrac14 n e^{2z} w^2}\, dw}
			{\int_{-\alpha_2}^{\alpha_2} e^{-\tfrac14 n e^{2z} w^2}\, dw} = \frac{\int_{0}^{\alpha_1}e^{-\tfrac14 n e^{2z} w^2}\, dw} {\int_{0}^{\alpha_2} e^{-\tfrac14 n e^{2z} w^2}\, dw}.
		\end{aligned}
	\end{equation}
	
	It is simple to demonstrate that the function 
	\begin{align*}
		Q(\alpha) =  \frac{\int_{0}^{\alpha}nw^2e^{-\frac{1}{4}ne^{2z}w^2}dw}{\int_{0}^{\alpha}e^{-\frac{1}{4}ne^{2z}w^2}dw}
	\end{align*}
	is increasing in $\alpha$, this is immediately implies that $q'(z)>0$. We can express the function $Q(\alpha)$ as $Q(\alpha)=E_{\alpha}\varphi(W)$, where $\varphi(w)=nw^2$ and the expectation is taken with the respect to the density function $\pi_{\alpha}(w) \propto e^{-\frac{1}{4}ne^{2z}w^2}$; where $0\leq w \leq \alpha$. Hence from the lemma given in \cite{lehmann2005testing}, it is directly shows that $E_{\alpha}\varphi(W)$ is increasing in $\alpha$. Hence the result follows.
\end{enumerate}
We consider an estimator of the form
\begin{equation}
	\delta_{r(\alpha)}(\underline{X},S) = \begin{cases} 
		\ln S + r(\alpha), & |W| \leq \alpha, \\
		\ln S + d_0, & \text{otherwise}.
	\end{cases}
\end{equation}

\begin{theorem} Let $d_0$ be the unique solution to (\ref{d0}) and $r_0(\alpha)$ be the minimizer of $\mathcal{H}_0(r)$. Then, the estimator
	\begin{equation}
		\delta_{r_0(\alpha)}(\underline{X},S) = \begin{cases} 
			\ln S + r_0(\alpha), & |W| \leq \alpha, \\
			\ln S + d_0, & \text{otherwise}.
		\end{cases}
	\end{equation} 
	has smaller risk than the BAEE $\delta_{0}$ under the general location invariant loss function $L(t)$. 
\end{theorem}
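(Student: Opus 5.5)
The plan is to compare the two estimators only on the event $\{|W|\le\alpha\}$, since they agree on its complement. Write the risk difference as
\begin{equation*}
R(\delta_0,\mu)-R(\delta_{r_0(\alpha)},\mu)=P_\mu(|W|\le\alpha)\left[\mathcal{H}_\mu(d_0)-\mathcal{H}_\mu(r_0(\alpha))\right].
\end{equation*}
As in Section \ref{sec2}, the risk depends only on $\mu/\sigma$, so I take $\sigma=1$ throughout; both estimators are of the form $\ln\sqrt V+\phi(W)$ up to the same additive constant convention. It therefore suffices to show $\mathcal{H}_\mu(r_0(\alpha))\le\mathcal{H}_\mu(d_0)$ for every $\mu\ge 0$, with strict inequality for some $\mu$.

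By Lemma \ref{lmm4.1}(i), $\mathcal{H}_\mu(r)$ is strictly bowl shaped in $r$ with minimizer $r_\mu(\alpha)$, so it is strictly increasing on $[r_\mu(\alpha),\infty)$. The key step is then the sandwich
\begin{equation*}
r_\mu(\alpha)\le r_0(\alpha)\le d_0 .
\end{equation*}
The left inequality is Lemma \ref{lmm4.1}(ii). For the right inequality I will use Lemma \ref{lmm4.1}(iii): $r_0(\alpha)$ is increasing in $\alpha$, so $r_0(\alpha)\le\lim_{\alpha\to\infty}r_0(\alpha)$. As $\alpha\to\infty$, the conditioning event $\{|W|\le\alpha\}$ increases to the whole space. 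Hence $f_0(z,\alpha)$ converges to the unconditional density of $\ln\sqrt V$ under $\mu=0$, and the minimizer converges to the solution of $E_{\underline 0,1}L'(\ln\sqrt V+r)=0$, which is $d_0$ by Lemma \ref{BAEE}.

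Given the sandwich, monotonicity of $\mathcal{H}_\mu$ on $[r_\mu(\alpha),\infty)$ yields $\mathcal{H}_\mu(r_0(\alpha))\le \mathcal{H}_\mu(d_0)$, and the displayed risk difference is nonnegative. Strictness follows from strict bowl shape as soon as $r_0(\alpha)<d_0$. This in turn follows if $r_0(\cdot)$ is strictly increasing, which the strict MLR in the proof of (iii) ($q'(z)>0$) should give.

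The main obstacle is the limit step $r_0(\alpha)\to d_0$. It requires passing the derivative and the limit through the integral, so that $\int L'(z+r)f_0(z,\alpha)\,dz\to E_{\underline 0,1}L'(\ln\sqrt V+r)$ for each $r$. Condition (C2) and monotone/dominated convergence (since $f_0(\cdot,\alpha)\uparrow$ unnormalized) handle this pointwise convergence. Because the limiting equation in $r$ has a unique root and $r\mapsto E L'(\cdot+r)$ is strictly increasing, the roots converge. An alternative that avoids limits is to compare $f_0(z,\alpha)$ directly with the unconditional density $f_0(z,\infty)$. The ratio $f_0(z,\alpha)/f_0(z,\infty)$ is of the form $q(z)$ in (\ref{qz}) with $\alpha_2=\infty$, and is increasing by the same argument as in (iii). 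That gives $r_0(\alpha)\le d_0$ directly via the stochastic-ordering lemma of \cite{lehmann2005testing}, and I would present this version.
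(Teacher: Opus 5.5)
Your proposal is correct and is essentially the paper's argument: the sandwich $r_\mu(\alpha)\le r_0(\alpha)\le d_0$ from Lemma \ref{lmm4.1}, combined with the strict bowl shape of $\mathcal{H}_\mu$ on the event $\{|W|\le\alpha\}$ (the two estimators agree off this event). You take the limit in the right direction, $\alpha\to\infty$, where the conditional law of $\ln\sqrt V$ becomes the unconditional one. The paper's text instead writes $\lim_{\alpha\to0}r_0(\alpha)=d_0$, which contradicts $r_0$ being increasing; that limit is in fact $m_0$. Your direct MLR comparison of $f_0(\cdot,\alpha)$ with $f_0(\cdot,\infty)$ gives $r_0(\alpha)<d_0$ cleanly, strictly and without any limit interchange.
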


\noindent\textit{Proof:} We have from lemma \ref{lmm4.1} that $r_0(\alpha)$ is increasing in $\alpha$. Then for $\alpha>0$ it is easy to show that  $r_0(\alpha)\leq \lim\limits_{\alpha \to 0}r_0(\alpha)=d_0$. Thus by using the risk function's convexity property, we obtain the desired result.

Furthermore let $\alpha'$ such that $0<\alpha'<\alpha<\infty$. Again from Lemma \ref{lmm4.1}, $r_0(\alpha')$ is increasing in $\alpha'$. Then following the above argument, one can show that the estimator of the form $\delta_{r_0(\alpha),\alpha,\alpha'}(\underline{X},S)=\ln S +\psi_{\alpha,\alpha'}(W)$ has smaller risk than the estimator $\delta_{0}$, where
\begin{equation*}
	\psi_{\alpha,\alpha'}(W) = \begin{cases} 
		r_0(\alpha'), & |W| \leq \alpha',\\
		r_0(\alpha), & \alpha'<|W| \leq \alpha,\\
		d_0, & \text{otherwise}.
	\end{cases}
\end{equation*} 
Using the same argument as above for a finite partition of $[0,\infty)$ as $0 < \alpha_{l,0} < \alpha_{l,1} < \alpha_{l,2}, \dots ,< \alpha_{l,n_l}$.  We define a function as 
\begin{equation*}
	\psi_{l}(w) = \begin{cases} 
		r_0(\alpha_{l,1}), & \alpha_{l,0}<W \leq \alpha_{l,1},\\
		r_0(\alpha_{l,2}), & \alpha_{l,1}<W \leq \alpha_{l,2},\\
		r_0(\alpha_{l,3}), & \alpha_{l,2}<W \leq \alpha_{l,3},\\
		\vdots\\
		r_0(\alpha_{l,n_{l}}), & \alpha_{l,n_{l-1}}<W \leq \alpha_{l,n_{l}},\\
		d_0, & \text{otherwise}.
	\end{cases}
\end{equation*} 
Now we assume that  $\max_{1\leq t \leq n_{l}}|\alpha_{l,t}- \alpha_{l, t-1}| \rightarrow 0$ and $\alpha_{l,n_{l}} \rightarrow 0$ as $l \rightarrow 0$. Then, $\phi_l(w) \rightarrow \phi_{*}(w)$, where $\phi_{*}(w)$ is given as 
\begin{equation*}
	\psi_{*}(W) = \begin{cases} 
		r_0(|W|), & |W| \leq \alpha,\\
		d_0, & \text{otherwise}.
	\end{cases}
\end{equation*} 
Now by using the Fatou's Lemma we get the desired result.
\begin{theorem}
	Under the general location invariant loss function $L(t)$, the risk of the estimator $\delta_{SE}(\underline{X},S)= \ln S + \psi_{*}(W)$ is nowhere larger than that of $\delta_{0}$.
\end{theorem}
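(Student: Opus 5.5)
We follow the Brewster--Zidek route: first compare one-cutoff step estimators with $\delta_0$, then refine to step functions in $|W|$, and finally pass to the limit $\psi_*$. Throughout we work conditionally on events defined through $|W|$. By Lemma \ref{lmm4.1}(i), each conditional risk $\mathcal{H}_{\mu}(r)$ is strictly bowl shaped in $r$ with minimizer $r_{\mu}(\alpha)$. By Lemma \ref{lmm4.1}(ii), $r_{\mu}(\alpha)\le r_0(\alpha)$ for every $\mu\ge 0$. By Lemma \ref{lmm4.1}(iii), $r_0(\cdot)$ is increasing and bounded above by $d_0$. The bound by $d_0$ comes from $r_0(\alpha)\to d_0$ as $\alpha\to\infty$: conditioning on $|W|\le\alpha$ becomes vacuous, so the limit is the BAEE constant $d_0$ of Lemma \ref{BAEE}.

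\textbf{Step 1.} Fix $\alpha>0$ and split the risk of $\delta_{r_0(\alpha)}$ over $\{|W|\le\alpha\}$ and its complement. On the complement the estimator coincides with $\delta_0$. On $\{|W|\le\alpha\}$ we have $r_{\mu}(\alpha)\le r_0(\alpha)\le d_0$, and $\mathcal{H}_\mu$ is increasing on $[r_\mu(\alpha),\infty)$. Hence $\mathcal{H}_\mu(r_0(\alpha))\le \mathcal{H}_\mu(d_0)$, so $R(\delta_{r_0(\alpha)},\mu)\le R(\delta_0,\mu)$ for every $\mu\ge0$.

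\textbf{Step 2.} Extend this to the step functions $\psi_l$ for a partition $\alpha_{l,0}<\dots<\alpha_{l,n_l}$. We move from the outermost cell inward, each time replacing the current value on $\{|W|\le \alpha_{l,t}\}$ by $r_0(\alpha_{l,t})$. Because $r_0$ is increasing, every replacement lowers the value on a nested conditioning set, and the previous value on that set is at least $r_0(\alpha_{l,t})\ge r_\mu(\alpha_{l,t})$. Applying the argument of Step 1 conditionally on $\{|W|\le\alpha_{l,t}\}$ shows that no replacement increases the risk. Induction on $t$ then gives $R(\ln S+\psi_l(W),\mu)\le R(\delta_0,\mu)$ for all $l$ and all $\mu\ge0$.

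\textbf{Step 3.} Pass to the limit. Choose the partitions with mesh tending to $0$, $\alpha_{l,0}\to0$, and $\alpha_{l,n_l}=\alpha$ (or $\to\infty$ if $\psi_*$ is intended on all of $[0,\infty)$). For every $w$ in the range we then have $\psi_l(w)\to r_0(|w|)$. This needs continuity of $r_0$, or at least monotone convergence of the right-endpoint values. Continuity follows from $\mathcal{H}_0$ being continuous in $(r,\alpha)$ jointly together with uniqueness of the minimizer.

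Since $L\ge0$ and is continuous (being convex), $L(\ln\sqrt V+\psi_l(W))\to L(\ln\sqrt V+\psi_*(W))$ almost surely. Fatou's lemma then gives
\[
R(\delta_{SE},\mu)\le \liminf_{l} R(\ln S+\psi_l(W),\mu)\le R(\delta_0,\mu).
\]

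The main obstacle is the ordering bookkeeping in Step 2. We must make sure that the conditional comparison on each nested set $\{|W|\le\alpha_{l,t}\}$ really uses the minimizer over exactly that set, because the lemma controls $r_\mu$ for sets of the form $\{|W|\le\alpha\}$ and not for annuli. For the same reason the replacements must run from the largest cutoff to the smallest. A secondary technical point is continuity of $\alpha\mapsto r_0(\alpha)$, which we settle by the uniqueness and continuity argument in Step 3. The limit $r_0(\alpha)\to d_0$ as $\alpha\to\infty$ also needs checking, by letting $\alpha\to\infty$ in the defining equation and using (C2) to differentiate under the integral.
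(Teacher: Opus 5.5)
Your proposal is correct and follows the paper's own Brewster--Zidek route. You handle the single-cutoff estimator $\delta_{r_0(\alpha)}$ via Lemma \ref{lmm4.1} and bowl-shapedness, build the nested step functions $\psi_l$ from the outermost cutoff inward, and pass to $\psi_*$ with Fatou's lemma; your version is more careful than the paper's sketch in two places: you take $r_0(\alpha)\uparrow d_0$ as $\alpha\to\infty$ (the paper writes $\alpha\to 0$), and you let $\alpha_{l,0}\to 0$ so that the innermost cell does not obstruct pointwise convergence to $\psi_*$.
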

\begin{remark}
	The smooth improved estimator $\delta_{SE}(\underline{X},S)$ is minimax.	
\end{remark}
\begin{example}
	Consider the loss function $L_1(t)=t^2$. Then the estimator $\delta^1_{SE}(\underline{X},S)= \ln S + \psi_{*1}(W)$, where  \begin{equation*}
		\psi_{*1}(W) = \begin{cases} 
			r_0(|W|), & |W| \leq \alpha,\\
			d_0, & \text{otherwise}.
		\end{cases}
	\end{equation*} with 	
	\begin{equation*}
		\begin{aligned}
			r_0(|w|)
			= -\tfrac12 \Bigg(\psi(a) + \ln 4 -\frac{\displaystyle
				\int_{0}^{n|w|^2}
				t^{-\tfrac12}(2+t)^{-a}\ln(2+t)\,dt}
			{\displaystyle
				\int_{0}^{n|w|^2}
				t^{-\tfrac12}(2+t)^{-a}\,dt}
			\Bigg),\text{where } a = n - \tfrac12 .
		\end{aligned}
	\end{equation*}
\end{example}

\begin{example}
	Let $L_2(t)=e^{a_1t}- a_1t-1,\ a_1 \neq0$. Then the estimator $\delta^2_{SE}(\underline{X},S)= \ln S + \psi_{*2}(W)$, where  \begin{equation*}
		\psi_{*2}(W) = \begin{cases} 
			r_0(|W|), & |W| \leq \alpha,\\
			d_0, & \text{otherwise}.
		\end{cases}
	\end{equation*} with 
	\begin{equation*}
		\begin{aligned}
		r_0(|w|)=  \frac{1}{a_1} \ln\Bigg(\Gamma(a)\int_{0}^{n|w|^2} t^{-\tfrac{1}{2}} (2+t)^{-a}\, dt 4^{\tfrac{a_1}{2}}\Gamma\left(a+\tfrac{a_1}{2}\right)\int_{0}^{n|w|^2} t^{-\tfrac{1}{2}} (2+t)^{-\left(a+\tfrac{a_1}{2}\right)} \, dt
		\Bigg),\\
		\end{aligned}
			\mbox{where} \ a=n-\frac{1}{2}.
	\end{equation*}

\end{example}

\subsection{A class of improved estimators} 
In this subsection, we derive a smother estimator than that of \cite{stein1964} using the integral expression of risk difference (IERD) method, which was proposed by \cite{kubokawa1994unified}. To achieve this, we consider the class of estimator which is of the form

%\begin{equation}\label{e1}
%	\delta_{\phi}(X,S) = \begin{cases} 
	%		\ln \sqrt{V} + \phi(T), & T>0,\\
	%		\ln \sqrt{V} + d_0, & \text{otherwise}.
	%	\end{cases}
%\end{equation}

\begin{equation}\label{e1}
	\delta_{\phi}(X,S) = \ln \sqrt{V} + \phi(T)
\end{equation}
where $T=\frac{(Z_2-Z_1)^2}{nV}$.
The joint density of $T=\frac{U_1}{nV}=\frac{U_1}{U_2}$ and $V$; (where $U_1=(Z_2-Z_1)^2 \sim 2\chi_1^2\left( \frac{\eta^2}{2}\right)$ is
\begin{equation}
	f(t,v;\eta)=n^2vg_{U_1}(ntv:\eta)h_{U_2}(nv);\ t>0,\ v>0
\end{equation} 
%\frac{1}{2^n\Gamma(n-1)\sqrt{\pi}}e^{-\left(\frac{w\sqrt{v}-\eta}{2}\right)^2}v^{n-\frac{3}{2}}
Further we define 
\begin{align*}
	G_{\eta}(x)=\int_{0}^{x}g_{U_1}(u_1:\eta)du_1 \ \mbox{and} \\ G(x)=\int_{0}^{x}g_{U_1}(u_1;0)du_1\ 
\end{align*}
where $g_{U_1}(u_1;\eta)$ is density of $U_1$ and $h_{U_2}(u_2)$ is density of $U_2$.

\begin{theorem}
	Consider a function $\phi(y)$ satisfies the following properties:
	\begin{enumerate}
		\item [(i)] $\phi(y)$ is nondecreasing and $\lim\limits_{y\rightarrow \infty}\phi(y)=d_{0}$,
		\item [(ii)] The integral $\int_{0}^{\infty}L'\left(\ln\sqrt{v}+\phi(y)\right)G(nyv)h(nv)dv$ is non-negative.
	\end{enumerate}
	Under these conditions and for any location-invariant loss function $L(t)$, the risk of the estimator $\delta_{\phi}(\underline{X},S)$ (as defined in (\ref{e1})) is never greater than the risk of the estimator $\delta_{0}$.
\end{theorem}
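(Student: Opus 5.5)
We follow the integral expression of risk difference (IERD) approach of \cite{kubokawa1994unified}. Because the risk of $\delta_\phi$ depends on the parameters only through $\eta$, we take $\sigma=1$ throughout. Condition (i) gives $\lim_{y\to\infty}\phi(y)=d_0$, so the fundamental theorem of calculus yields
\begin{equation*}
L\left(\ln\sqrt{V}+d_0\right)-L\left(\ln\sqrt{V}+\phi(T)\right)=\int_{T}^{\infty}\frac{d}{dy}L\left(\ln\sqrt{V}+\phi(y)\right)dy=\int_{T}^{\infty}\phi'(y)\,L'\left(\ln\sqrt{V}+\phi(y)\right)dy .
\end{equation*}
If $\phi$ is only monotone, we replace $\phi'(y)\,dy$ by the Stieltjes measure $d\phi(y)$. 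Taking expectations with respect to the joint density $f(t,v;\eta)=n^2v\,g_{U_1}(ntv;\eta)h_{U_2}(nv)$ and applying Fubini, which condition (C2) justifies, we obtain
\begin{equation*}
R(\delta_0,\eta)-R(\delta_\phi,\eta)=\int_0^\infty \phi'(y)\int_0^\infty L'\left(\ln\sqrt{v}+\phi(y)\right)\left[\int_0^{y} n^2 v\, g_{U_1}(ntv;\eta)\,dt\right] h_{U_2}(nv)\,dv\,dy .
\end{equation*}
Substituting $u_1=ntv$ in the inner $t$-integral gives $\int_0^y n^2v\,g_{U_1}(ntv;\eta)\,dt=n\,G_\eta(nyv)$. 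The risk difference is therefore
\begin{equation*}
n\int_0^\infty \phi'(y)\int_0^\infty L'\left(\ln\sqrt{v}+\phi(y)\right)G_\eta(nyv)\,h(nv)\,dv\,dy .
\end{equation*}
Since $\phi'\ge 0$ by (i), it suffices to show that the inner $v$-integral is nonnegative for every $y>0$ and every $\eta\ge0$.

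Condition (ii) gives exactly this at $\eta=0$. To pass to $\eta>0$, we compare $G_\eta(nyv)$ with $G(nyv)$ using the following monotonicity claim:
\begin{equation*}
\frac{G_\eta(nyv)}{G(nyv)} \ \text{is nonincreasing in } v \text{ for each fixed } y>0 .
\end{equation*}
This claim is the main obstacle. The variable $U_1/2$ is noncentral $\chi^2_1$ with noncentrality $\eta^2/2$, and the family $\{g_{U_1}(\cdot;\eta)\}$ has monotone likelihood ratio in $u_1$: its ratio to the central density is $\sum_k c_k u_1^k$ with $c_k\ge0$, which is increasing. A standard lemma then shows that $G_\eta(x)/G(x)=E_0\left[g_\eta(U_1)/g_0(U_1)\mid U_1\le x\right]$ is nondecreasing in $x$. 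Checking the direction of this monotonicity carefully is the delicate point. Because $G_\eta(x)/G(x)$ increases in $x$, the ratio in the claim is nondecreasing in $v$, not nonincreasing. We therefore have to pair it correctly with the sign pattern of $L'$ in the next step.

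The function $v\mapsto L'(\ln\sqrt v+\phi(y))$ is increasing in $v$ by strict convexity (C1), so it changes sign exactly once, from negative to positive, at some point $v_0(y)$. Let $\rho(v)=G_\eta(nyv)/G(nyv)$. On $v<v_0$ we have $L'<0$ and $\rho(v)\le\rho(v_0)$; on $v>v_0$ we have $L'>0$ and $\rho(v)\ge\rho(v_0)$. In both regions this gives $L'(\cdot)\,\rho(v)\ge L'(\cdot)\,\rho(v_0)$. Multiplying by $G(nyv)h(nv)\ge0$ and integrating over $v$ yields
\begin{equation*}
\int_0^\infty L'\,G_\eta(nyv)h(nv)\,dv\ \ge\ \rho(v_0)\int_0^\infty L'\,G(nyv)h(nv)\,dv\ \ge 0,
\end{equation*}
where the last inequality is condition (ii). This establishes the claim, so $R(\delta_\phi,\eta)\le R(\delta_0,\eta)$ for all $\eta\ge0$. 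The points that still need care are the MLR property of the noncentral $\chi^2_1$ family, which we take from \cite{lehmann2005testing}, and the justification for interchanging integrals, which rests on (C2).
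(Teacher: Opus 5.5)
Your proof is correct and is essentially the paper's IERD argument. It uses the same risk-difference representation $n\int_0^\infty\phi'(y)\int_0^\infty L'\bigl(\ln\sqrt{v}+\phi(y)\bigr)G_\eta(nyv)h(nv)\,dv\,dy$, the same nondecreasing ratio $G_\eta/G$ from the monotone likelihood ratio of the noncentral $\chi^2_1$ family, and the same sign-change comparison at $v_0=e^{-2\phi(y)}$ (Kubokawa's Lemma 2.1 in the paper's appendix, which you prove inline). One thing to fix in the write-up: delete the initially displayed ``nonincreasing'' claim, since the argument you actually carry out correctly uses the nondecreasing direction.
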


\noindent\textit{Proof:} Let $RD$ be the risk difference of the estimators $\delta_0$ and $\delta_{\phi_2}$. This can be written as
\begin{align*}
	RD=&E\int_{1}^{\infty}L'\left(\ln \sqrt{v} +\phi(xt)\right)\phi'(xt)t dx \ \Big(\lim\limits_{x\rightarrow\infty}\phi(x)=d_0\Big)\\
	=&n^2\int_{0}^{\infty}\int_{0}^{\infty}\int_{1}^{\infty}L'\left(\ln \sqrt{v} +\phi(xt)\right)\phi'(xt)tv g_{U_1}(ntv;\eta) h_{U_2}(nv)dx dv dt
\end{align*}
Put $y=xt$, we get
\begin{align*}
	RD&=n^2\int_{0}^{\infty}\int_{0}^{\infty}\int_{1}^{\infty}L'\left(\ln \sqrt{v} +\phi(y)\right)\phi'(y)\frac{y}{x}v g_{U_1}(n\frac{y}{x}v;\eta) h_{U_2}(nv)dx dv \frac{dy}{x}
\end{align*}
Take the transformation $p=\frac{y}{x}$, we obtain
\begin{align*}
	RD=&n^2\int_{0}^{\infty}\int_{0}^{\infty}\int_{0}^{y}L'\left(\ln \sqrt{v} + \phi(y)\right)\phi'(y)v g_{U_1}(npv) h_{U_2}(nv)dp dv dy\\
%	=&n\int_{0}^{\infty}\int_{0}^{\infty}L'\left(\ln \sqrt{v} + \phi(y)\right)\phi'(y)\\
%	&~~~~~~~~~~~~~~~~~~~~~G_{U_1}(nyv;\eta) h_{U_2}(nv) dv dy\\
	=&n\int_{0}^{\infty}\phi'(y)\int_{0}^{\infty}\frac{G_{U_1}(nyv;\eta)}{G(nyv)}L'\left(\ln \sqrt{v} + \phi(y)\right) G(nyv)h_{U_2}(nv) dv dy.
\end{align*}
According to Condition A, $L'(t)<0$ for $t<0$ and $L'(t)>0$ for $t>0$. Moreover $\frac{G_{U_1}(z;\eta)}{G(z)}$ is nondecreasing for $z>0$, because $\frac{g_{U_1}(z;\eta)}{g(z)}$ is nondecreasing for $z>0$. Define $v_0=\left(e^{-\phi(z)}\right)^2$. By applying the lemma \ref{app1}, we have

\begin{align*}
	RD \geq & n\int_{0}^{\infty}\phi'(y)\frac{G_{U_1}(nyv_0; \eta)}{G(nyv_0)}\int_{0}^{\infty}L'\left(\ln \sqrt{v} + \phi(y)\right)G(nyv)h_{U_2}(nv) dv dy.
\end{align*}
Since we have $\phi'\geq0$ almost every where. Then the risk difference becomes non-negative when
\begin{align*}
	\int_{0}^{\infty}L'\left(\ln \sqrt{v} + \phi_2(y)\right)G(nyv)h_{U_2}(nv) dv\geq0.
\end{align*}

\begin{corollary}
	Under the $L_1(t)$ loss function, the risk of the estimator $\delta_{\phi}$ given in (\ref{d1}) is nowhere larger than that of $\delta_{S1}$ provided $\phi(\cdot)$ satisfies the following conditions:
	\begin{enumerate}
		\item $\phi(y)$ is increasing function in $y$ and $\lim\limits_{y\rightarrow\infty}\phi(y)=-\frac{1}{2} \left[ \ln2+\psi(n-1) \right]$;
		\item $\phi(y)\geq\phi_{*1}(y)$, where
%		\begin{align*}
%		\phi_{*1}(y)=- \frac{\int_{0}^{\infty}\ln(v)v^{n-2}e^{-\frac{v}{2}}\int_{0}^{nyv}x^{-\frac{1}{2}}e^{-\frac{x}{4}}dx dv}{2\int_{0}^{\infty}v^{n-2}e^{-\frac{v}{2}}\int_{0}^{nyv}x^{-\frac{1}{2}}e^{-\frac{x}{4}}dx dv}.
%		\end{align*}
		\begin{align*}
		\phi_{*1}(y) =&-\tfrac{1}{2}\Bigg(\psi(a) + \ln 4 -\frac{ \int_{0}^{ny} t^{-\tfrac{1}{2}} (2+t)^{-a} \ln(2+t)\,dt}{ \int_{0}^{ny} t^{-\tfrac{1}{2}} (2+t)^{-a}\,dt}\Bigg),\\
		\end{align*}
		where $a=n-\frac{1}{2}$.		
	\end{enumerate}
\end{corollary}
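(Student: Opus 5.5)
The plan has two stages. Stage one specializes the preceding theorem (the IERD domination result) to $L_1(t)=t^2$ and shows that its condition (ii) is equivalent to condition 2 of the corollary. This gives $R(\delta_\phi)\le R(\delta_{01})$. Stage two upgrades the comparison from $\delta_{01}$ to $\delta_{S1}$, as the statement requires.

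For stage one, I take $L_1'(t)=2t$. Condition (ii) of the theorem then becomes
\begin{equation*}
\int_0^\infty\bigl(\ln\sqrt{v}+\phi(y)\bigr)G(nyv)h_{U_2}(nv)\,dv\ \ge 0 ,
\end{equation*}
that is,
\begin{equation*}
\phi(y)\ \ge\ -\frac{\int_0^\infty \ln\sqrt{v}\,G(nyv)h_{U_2}(nv)\,dv}{\int_0^\infty G(nyv)h_{U_2}(nv)\,dv}.
\end{equation*}
To evaluate the right-hand side, I write $G(nyv)=\int_0^{nyv}g_{U_1}(u;0)\,du$ with $g_{U_1}(u;0)\propto u^{-1/2}e^{-u/4}$, and use $h_{U_2}\propto v^{n-2}e^{-v/2}$ (after rescaling). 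I then interchange the order of integration and substitute $u=vt$, so that the limit becomes $t\le ny$. The inner $v$-integral is a gamma integral. It produces $\Gamma(a)(1/2+t/4)^{-a}$ with $a=n-\tfrac12$, and its logarithmic version produces an extra factor $\psi(a)-\ln(\tfrac12+\tfrac t4)=\psi(a)+\ln4-\ln(2+t)$. Collecting the factors of $\tfrac12$ gives exactly $\phi_{*1}(y)$. Condition 1 is condition (i) of the theorem, since $d_{01}=-\tfrac12[\ln2+\psi(n-1)]$.

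Stage two is the comparison with $\delta_{S1}$. My approach is to work conditionally on $W=w$, as in Section \ref{sec2}. On $\{W=w\}$, both estimators have the form $\ln\sqrt V+c$: the constant is $c=\phi(w^2)$ for $\delta_\phi$ and $c=\phi_{S}(w)$ for $\delta_{S1}$. The conditional risk $R_1$ is strictly bowl shaped in $c$ with minimizer $\phi_\mu(w)$, where $\phi_\mu(w)\le\phi_0(w)$ for $w>0$ and $\phi_\mu(w)\ge\phi_0(w)$ for $w<0$. It therefore suffices to show that, for $w>0$, $\phi(w^2)$ lies between $\phi_\mu(w)$ and $\phi_S(w)=\min\{d_{01},\phi_0(w)\}$, with the mirrored ordering for $w<0$. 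The upper side for $w>0$ is $\phi(w^2)\le d_{01}$, which follows from monotonicity and the limit. I would then try to prove that $\phi_{*1}(w^2)\ge\phi_0(w)$, i.e. that the conditional-on-$|W|\le\alpha$ minimizer $r_0$ sits above the pointwise minimizer. This would use the MLR argument of Lemma \ref{lmm4.1}(ii)--(iii), comparing the truncated density with the point-conditional density at $|W|=|w|$.

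The main obstacle is stage two. It needs pointwise orderings between $\phi(w^2)$ and $\phi_0(w)$, whereas the hypotheses only give $\phi\ge\phi_{*1}$, an integrated ("$|W|\le\alpha$") bound. If the pointwise ordering fails, I would instead apply the IERD identity directly along the segment from $\phi_S$ to $\phi$. To do this I would write $R(\delta_{S1})-R(\delta_\phi)$ as an integral of $L_1'$ against $\phi-\phi_S$ and use Lemma \ref{app1} to sign it. If that fails as well, the argument delivers only domination over $\delta_{01}$.
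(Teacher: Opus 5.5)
Your stage one is exactly the argument the paper relies on; it gives no separate proof of the corollary. You substitute $L_1'(t)=2t$ into condition (ii) of the IERD theorem, solve for $\phi(y)$, and evaluate both $v$-integrals as gamma integrals after $u=vt$. Your computation, including the factor $\psi(a)+\ln 4-\ln(2+t)$ with $a=n-\tfrac12$, reproduces $\phi_{*1}$ correctly, and condition 1 is condition (i) with $d_0=d_{01}$. What this proves is $R(\delta_\phi)\le R(\delta_{01})$, and that is the only thing the corollary can mean. The parent theorem compares with the BAEE, so ``$\delta_{S1}$'' in the statement has to be read as $\delta_{01}$, and you should stop after stage one.

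Stage two is not merely difficult; it cannot succeed, because domination of $\delta_{S1}$ is false.

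\textbf{Why $\phi_{*1}\le d_{01}$.} At $\eta=0$ the joint density of $(T,V)$ gives $P_0(T\le y,\,V\in dv)\propto G(nyv)h_{U_2}(nv)\,dv$. Hence $\phi_{*1}(y)=-E_0[\ln\sqrt V\mid T\le y]$. Since $G(nyv)$ is increasing in $v$, this conditioning makes $V$ stochastically larger, so $\phi_{*1}\le d_{01}$.

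\textbf{Counterexample.} Take $\phi\equiv d_{01}$, reading ``increasing'' as ``nondecreasing'' as in the theorem. If strict monotonicity is insisted on, take instead $d_{01}-\epsilon e^{-y}$ with small $\epsilon>0$, whose risk is within $O(\epsilon)$ of that of $\delta_{01}$. Either choice satisfies both hypotheses and gives $\delta_\phi=\delta_{01}$, up to $O(\epsilon)$ in risk. The Stein-type theorem of Section 2 shows $\delta_{01}$ is strictly worse than $\delta_{S1}$ at every $\eta\ge 0$, because $m_0=-\tfrac12[\ln 2+\psi(n-\tfrac12)]<d_{01}$.

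\textbf{Where your sandwich breaks.} For small $w>0$ you have $\phi_S(w)=\phi_0(w)<d_{01}=\phi(w^2)$, while $\phi_\mu(w)\le\phi_0(w)$. So $\phi(w^2)$ lies farther from the conditional minimizer than $\phi_S(w)$ does.

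\textbf{The planned inequality runs the wrong way.} Note that $\phi_{*1}(w^2)=r_0(|w|)$ is the minimizer given $|W|\le|w|$. Under that conditioning $V$ is stochastically larger than given $|W|=|w|$, so $\phi_{*1}(w^2)\le\phi_0(w)=m_0+\ln\sqrt{1+nw^2/2}$, and the right-hand side is even unbounded. In any case, the lower bound $\phi\ge\phi_{*1}$ can never deliver the upper bound $\phi(w^2)\le\phi_0(w)$ that the comparison on $\{W>0\}$ would need. Your IERD fallback cannot rescue this either, since the target inequality is false.
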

\begin{corollary}
	For the loss function $L_2(t)$, the risk of the estimator $\delta_{\phi}$ given in (\ref{d1}) is nowhere larger than that of $\delta_{S2}$ provided the function $\phi$ satisfies the following conditions:
	\begin{enumerate}
		\item $\phi(y)$ is increasing function in $y$ and $\lim\limits_{y\rightarrow\infty}\phi(y)= - \frac{1}{a_1}\ln\left[2^{\frac{a_1}{2}}\frac{\Gamma\left(n-1+\frac{a_1}{2}\right)}{\Gamma\left(n-1\right)}\right]$;
		\item $\phi(y)\geq\phi_{*2}(y)$, where	
		
		\begin{align*}
		\phi_{*2}(y) &= \frac{1}{a_1} \ln\Bigg( \Gamma(a)\int_{0}^{ny} t^{-\tfrac{1}{2}} (2+t)^{-a} dt 4^{\tfrac{a_1}{2}}\Gamma\left(a+\tfrac{a_1}{2}\right)\int_{0}^{ny} t^{-\tfrac{1}{2}} (2+t)^{-\left(a+\tfrac{a_1}{2}\right)}dt\Bigg),\\
		\end{align*}
		where $a=n-\frac{1}{2}$.
	\end{enumerate}
\end{corollary}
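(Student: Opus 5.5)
The plan is to apply the preceding theorem (the IERD dominance result) with $L=L_2$ and make its condition (ii) explicit. Condition 1 of the corollary is exactly hypothesis (i) of that theorem, because for $L_2$ the constant $d_0$ is $d_{02}=-\frac{1}{a_1}\ln\left[2^{a_1/2}\Gamma(n-1+\frac{a_1}{2})/\Gamma(n-1)\right]$ (Lemma \ref{BAEE} and the linex example). The remaining work is to show that condition 2, $\phi(y)\ge\phi_{*2}(y)$, implies $\int_0^\infty L_2'(\ln\sqrt v+\phi(y))G(nyv)h_{U_2}(nv)\,dv\ge 0$ for each $y$.

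First I would use $L_2'(t)=a_1(e^{a_1t}-1)$. The integrand becomes $a_1\left(e^{a_1\phi(y)}v^{a_1/2}-1\right)G(nyv)h_{U_2}(nv)$. The integral is monotone in $\phi(y)$: it is increasing for either sign of $a_1$, since $\frac{d}{dc}\,a_1(e^{a_1c}v^{a_1/2}-1)=a_1^2e^{a_1c}v^{a_1/2}>0$. So it is nonnegative exactly when $\phi(y)\ge\phi_{*2}(y)$, where $\phi_{*2}(y)$ is the root:
\begin{equation*}
\phi_{*2}(y)=-\frac{1}{a_1}\ln\frac{\int_0^\infty v^{a_1/2}G(nyv)h_{U_2}(nv)\,dv}{\int_0^\infty G(nyv)h_{U_2}(nv)\,dv}.
\end{equation*}

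Next I would evaluate these integrals in closed form.
\begin{itemize}
\item Write $G(nyv)=\int_0^{nyv}g_{U_1}(u;0)\,du$, where $U_1\sim 2\chi^2_1$ gives $g\propto u^{-1/2}e^{-u/4}$.
\item Substitute $u=nvs$ (so $s\in(0,y)$) and swap the order of integration. The $v$-integral is then a gamma integral of the form $\int_0^\infty v^{n-3/2+a_1/2}e^{-v(2+s)/4\cdots}\,dv$, with the scaling matched to $h_{U_2}$ being a $\chi^2_{2(n-1)}$-type density.
\item That integral yields $\Gamma(a+\frac{a_1}{2})\,4^{a_1/2}(2+s)^{-(a+a_1/2)}$ up to common factors, with $a=n-\frac12$, while the denominator gives $\Gamma(a)(2+s)^{-a}$.
\item Rescaling $s$ to $t=ns$ produces the limits $\int_0^{ny}$ and the weight $t^{-1/2}$.
\end{itemize}
Taking logs then gives the stated expression for $\phi_{*2}$; the printed formula is presumably meant to be this ratio with the constants rearranged.

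The main obstacle is bookkeeping the constants. Specifically, I must make the scale factors from $g_{U_1}$ and $h_{U_2}$ combine into $(2+t)$ with exactly the powers $-a$ and $-(a+a_1/2)$ and the factor $4^{a_1/2}$. I would check this by confirming that as $y\to\infty$ the expression reduces to the linex $m_0$-type constant. Once that is verified, the previous theorem gives that $\delta_\phi$ has risk no greater than $\delta_{02}$. The comparison with $\delta_{S2}$ in the statement would follow the same route, with the reference estimator being the BAEE.
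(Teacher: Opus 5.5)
Your route is the one the paper intends. The paper gives no separate proof: the corollary is the preceding IERD theorem with $L_2'(t)=a_1(e^{a_1t}-1)$ substituted. Your reduction of condition (ii) to $\phi(y)\ge\phi_{*2}(y)$, using strict monotonicity in $\phi(y)$ for either sign of $a_1$, is correct, and so is your gamma-integral evaluation. The printed $\phi_{*2}$ is exactly your ratio with a missing fraction bar: the $\Gamma(a)$ term belongs in the numerator and the $4^{a_1/2}\Gamma(a+\tfrac{a_1}{2})$ term in the denominator.

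Two corrections are needed.

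\textbf{The sanity check targets the wrong limit.} As $y\to\infty$ we have $G(nyv)\to1$, so the ratio tends to $2^{a_1/2}\Gamma(n-1+\tfrac{a_1}{2})/\Gamma(n-1)$ and $\phi_{*2}(\infty)=d_{02}$. This is precisely what makes conditions 1 and 2 compatible. The linex $m_0$, built from $\Gamma(a+\tfrac{a_1}{2})/\Gamma(a)$, is instead the limit as $y\to0^+$. There $G(nyv)\propto\sqrt{v}$, which turns the weight into the $\mathrm{Gamma}(n-\tfrac12,2)$ density of $U$. Run as written, your check would report a spurious mismatch.

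\textbf{The comparison is with the BAEE, not $\delta_{S2}$.} The theorem's risk-difference identity integrates from $\phi(\infty)=d_{02}$. It therefore only compares $\delta_\phi$ with the BAEE $\delta_{02}=\ln\sqrt V+d_{02}$. It gives nothing against the Stein-type $\delta_{S2}$, which is not of the form $\ln\sqrt V+\text{const}$. Do not say the comparison with $\delta_{S2}$ ``follows the same route''. State instead that $\delta_{S2}$ in the statement must be read as $\delta_{02}$. Section 5 of the paper even reports the Stein-type estimator beating the smooth one for small $\eta$, so the literal version is not what is being proved.
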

\begin{remark}
	From the above corollaries we observe that the \cite{brewster1974improving}-type estimators is coincide with \cite{kubokawa1994unified} IERD type estimators.
\end{remark}
\section{Improved estimation under Generalized Pitman Closeness}\label{sec4}
In this section we derive the estimation of $\ln \sigma$ under the generalized Pitman closeness criterion. A concise discussion of the Pitman closeness criterion can be found in \cite{garg2024unified}. The Pitman closeness criterion was first proposed by \cite{pitman1991closest}. The generalized pitman closeness (GPC) criterion (see \cite{nayak1990estimation} and \cite{kubokawa1991equivariant}) under a location invariant loss function $W(\cdot,\cdot)$ is defined as follows. 

\begin{defn}
	Let $\textbf{X}$ be  random vector having a distribution depends on an unknown parameter $\boldsymbol{\theta} \in \Theta$. Let $\delta_1$ and $\delta_2$ be two estimators of a real-valued estimand $\xi(\boldsymbol{\theta})$. Also, let $W(\boldsymbol{\theta},\xi(\boldsymbol{\theta}))$ be a location invariant loss function for estimating $\xi(\boldsymbol{\theta})$. Then, the GPC of $\delta_1$ relative to $\delta_2$ is defined by
	\begin{align*}
		\mbox{GPC}(\delta_1,\delta_2;\boldsymbol{\theta})=&P_{\boldsymbol{\theta}}[W(\boldsymbol{\theta},\delta_1)<W(\boldsymbol{\theta},\delta_2)] + \frac{1}{2}P_{\boldsymbol{\theta}}[W(\boldsymbol{\theta},\delta_1)=W(\boldsymbol{\theta},\delta_2)], \ \ \boldsymbol{\theta}\in\Theta.
	\end{align*}
	The estimator $\delta_1$ is said to be closer to $\xi(\boldsymbol{\theta})$ than the estimator $\delta_2$, under the GPC criterion, if $\mbox{GPC}(\delta_1,\delta_2;\boldsymbol{\theta})\geq \frac{1}{2}$ $\forall \  \boldsymbol{\theta} \in \Theta$, and strict inequality hold for some $\boldsymbol{\theta} \in \Theta$.
\end{defn}
The following lemma taken from \cite{garg2024unified}.
\begin{lemma}(\cite{garg2024unified})\label{pt1} Let $Y$ be a random variable with a lebesgue probability density function and let $m_Y$ be the median of $Y$. Let $F$ be a non negative function such that $F(0)=0$, $F(t)$ is strictly increasing for $t>0$ and strictly decreasing for $t<0$. Then, for $-\infty<c_1<c_2\leq m_Y$ or $-\infty<m_Y\leq c_2<c_1$, $\mbox{GPC}=P[F(Y-c_2)<F(Y-c_1)]+\frac{1}{2}P[F(Y-c_2)=F(Y-c_1)]>\frac{1}{2}$	
\end{lemma}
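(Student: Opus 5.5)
We take $c_1<c_2\le m_Y$; the case $m_Y\le c_2<c_1$ follows by applying this case to $-Y$, whose median is $-m_Y$, with $-c_1>-c_2\ge -m_Y$, and to $\tilde F(t)=F(-t)$, which has the same monotonicity properties. The plan is to describe exactly where $F(Y-c_2)<F(Y-c_1)$ holds, show that this set contains $\{Y>c_2\}$ up to a null set, show that the tie set is Lebesgue-null, and then conclude from the median property.

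\textbf{Step 1 (region $Y\ge c_2$).} Fix $y$ with $y\ge c_2$. Then $y-c_1>y-c_2\ge 0$. Since $F(0)=0$, $F\ge 0$ and $F$ is strictly increasing on $(0,\infty)$, we have $F(t)>0$ for $t>0$; together with $F(0)=0$ this makes $F$ strictly increasing on $[0,\infty)$. Hence $F(y-c_2)<F(y-c_1)$. It follows that $\{Y\ge c_2\}\subseteq\{F(Y-c_2)<F(Y-c_1)\}$.

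\textbf{Step 2 (region $Y<c_2$).} Here the two arguments can have opposite signs when $c_1<Y<c_2$, so we do not try to decide the comparison. We only need ties to have probability zero. For $c_1<y<c_2$, a tie means $F(y-c_2)=F(y-c_1)$ with $y-c_2<0<y-c_1$. As $y$ increases, $y-c_2$ moves toward $0$ from below, so $F(y-c_2)$ is strictly decreasing in $y$. At the same time $y-c_1$ moves away from $0$, so $F(y-c_1)$ is strictly increasing in $y$. Their difference is therefore strictly monotone in $y$ and vanishes for at most one value $y^*$. For $y\le c_1$, both $y-c_2<y-c_1\le 0$ and $F$ is strictly decreasing on $(-\infty,0]$, so $F(y-c_2)>F(y-c_1)$ and there is no tie. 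The tie set is thus at most a single point, which has probability zero because $Y$ has a Lebesgue density.

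\textbf{Step 3 (conclusion).} Since ties have probability zero,
\[
\mbox{GPC}=P[F(Y-c_2)<F(Y-c_1)]\ge P(Y\ge c_2)\ge P(Y\ge m_Y)\ge \tfrac12,
\]
using $c_2\le m_Y$ in the middle inequality.

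\textbf{Main obstacle: strictness.} The chain only gives $\ge\frac12$, and equality is possible in every link. Strictness therefore requires positive probability of $Y$ landing in $\{y<c_2:\ F(y-c_2)<F(y-c_1)\}$, or in $[c_2,m_Y)$ when $c_2<m_Y$. By Step 2 that first set is the interval $(y^*,c_2)$, possibly empty. Two cases arise.

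If $c_2<m_Y$, we want $P(c_2\le Y<m_Y)>0$. This is not automatic: a density can vanish on an interval, since the median need not be unique. We would settle this by taking $m_Y$ to be the smallest median, or by assuming $Y$ has positive density near $m_Y$, which holds for the log-gamma type variables used in this paper.

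If $c_2=m_Y$, we need positive density on $(y^*,m_Y)$. This is the genuinely delicate point. We would first try to show $y^*<c_2$, which holds because at $y=c_2$ the difference is $F(0)-F(c_2-c_1)<0$, and then argue that a density positive near the median gives positive mass to this interval. We would state this mild regularity assumption explicitly, since without it only $\ge\frac12$ can be guaranteed.
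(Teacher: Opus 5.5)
The paper does not prove this lemma; it quotes it from \cite{garg2024unified}. So your argument has to stand on its own. Your Steps 1--3 are sound: $\{Y\ge c_2\}\subseteq\{F(Y-c_2)<F(Y-c_1)\}$, the tie set is at most one point, and so $\mbox{GPC}\ge P(Y\ge c_2)\ge\tfrac12$. You are also right that strictness needs some condition on the median.

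The genuine gap is the case $c_2=m_Y$. You conclude $y^*<c_2$ from the sign of the difference \emph{at} $y=c_2$, namely $F(0)-F(c_2-c_1)<0$. But your monotonicity argument only covers the open interval $(c_1,c_2)$, and the hypotheses do not make $F$ continuous at $0$. As $y\uparrow c_2$, $F(y-c_2)$ tends to $\inf_{t<0}F(t)$, which can be strictly positive. Then $\{c_1<y<c_2:\ F(y-c_2)<F(y-c_1)\}$ may be empty.

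For instance, take $F(t)=1-t$ for $t<0$, $F(t)=t$ for $t\ge 0$, $Y\sim N(0,1)$, $c_1=-\tfrac12$ and $c_2=0=m_Y$. On $(-\tfrac12,0)$ one has $F(y-c_2)=1-y>1>y+\tfrac12=F(y-c_1)$, and also $F(y-c_2)>F(y-c_1)$ for $y\le-\tfrac12$. Hence $\mbox{GPC}=P(Y\ge 0)=\tfrac12$. So the statement as written is false. The regularity you propose (positive density near $m_Y$) cannot rescue it, since this $Y$ is normal.

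What is missing is continuity of $F$ at $0$, alongside uniqueness of the median. You need left-continuity in the case $c_1<c_2\le m_Y$ and right-continuity in the mirrored case. Under these two conditions the strictness step is short and needs no case split. Choose $\epsilon\in(0,\tfrac{c_2-c_1}{2}]$ with $F(t)<F(\tfrac{c_2-c_1}{2})$ for $-\epsilon<t<0$. Then for $c_2-\epsilon<y<c_2$ you get $F(y-c_2)<F(\tfrac{c_2-c_1}{2})<F(y-c_1)$, because $y-c_1>\tfrac{c_2-c_1}{2}$. Together with Step 1 this gives $\mbox{GPC}\ge P(Y>c_2-\epsilon)>\tfrac12$, the last inequality holding because $c_2-\epsilon$ lies strictly below the unique median.

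Both conditions do hold where the paper applies the lemma. A strictly convex $L$ as in (C1) is continuous, and the conditional density of $\ln V$ given $W=w$ is positive on all of $\mathbb{R}$.
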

\subsection{Improve estimation of $\ln \sigma$}
Let $\delta_{\psi_1}(\underline{X})=\ln S -\psi_1(W)$ and $\delta_{\psi_1}(\underline{X})=\ln S -\psi_1(W)$ be two location equivariant estimators of $\ln \sigma$, where $\psi_1$ and $\psi_2$ are two real-valued functions defined on $\mathbb{R}$. Then, the GPC of $\delta_{\psi_1}(\underline{X})$ relative to $\delta_{\psi_2}(\underline{X})$ is given by

\begin{align*}
	\mbox{GPC}(\delta_{\psi_1},\delta_{\psi_2};\underline{\theta})= P_{\underline{\theta}}[L(\ln S - \psi_1(W) - \ln \sigma) <& L(\ln S - \psi_2(W) - \ln \sigma)] +  \frac{1}{2} P_{\underline{\theta}}[L(\ln S - \psi_1(W) - \ln \sigma)\\
%	=& L(\ln S - \psi_2(W) - \ln \sigma)]\\
	=&E^W[P[L(\ln \sqrt{V} - \psi_1(W)) \\
	<& L(\ln \sqrt{V} - \psi_2(W))|W]] + E^W[P[L(\ln \sqrt{V} - \psi_2(W)) \\
	<& L(\ln \sqrt{V} - \psi_2(W))|W]]\\
\end{align*}
For a fixed $w>0$, we define 
\begin{align*}
	R_{1,\eta}(\psi_1(w),\psi_2(w),w) = P_{\underline{\theta}}[L(\ln \sqrt{V} - \psi_1(W))	<& L(\ln \sqrt{V} - \psi_2(W))] + \frac{1}{2} P_{\underline{\theta}}[L(\ln \sqrt{V} - \psi_1(W) )\\
	=& L(\ln \sqrt{V} - \psi_2(W))]
\end{align*}
For any fixed $\eta>0$ and $w$, let $m_\eta(w)$ denote the median of the distribution $Z=\ln V$ given $W=w$. The conditional distribution $Z=\ln V$ given $W=w$ is 

$$f_{\eta}(z|w)\propto e^{(2n-1)z-\frac{1}{2}[e^{2z} + \frac{1}{2}(\sqrt{n}e^zw-\eta)^2]}$$

Thus, \begin{align*}
	\int_{0}^{m_\eta(w)}e^{(2n-1)z-\frac{1}{2}[e^{2z} + \frac{1}{2}(\sqrt{n}e^zw-\eta)^2]}dz =\frac{1}{2}\int_{-\infty}^{\infty} e^{(2n-1)z-\frac{1}{2}[e^{2z} + \frac{1}{2}(\sqrt{n}e^zw-\eta)^2]}dz.
      \end{align*}
For any fixed $w$, by using the Lemma \ref{pt1}, we obtain $R_{1,\eta}(\psi_1(w),\psi_2(w),w) >1/2$ $\forall \ \eta \geq 0$, if $\psi_2(w)<\psi_1(w)\leq m_\eta(w)$ $\forall \ \eta \geq 0$ or if $m_\eta(w)\leq\psi_1(w)<\psi_2(w)$ $\forall \ \eta \geq 0$. Moreover, for any fixed $w$, $R_{1,\eta}(\psi_1(w),\psi_2(w),w)=1/2$ $\forall \ \eta \geq 0$.
\begin{theorem}
	Let $\delta_{\psi}(\underline{X})=\ln S -\psi(W)$ be a location equivariant estimator of $\ln \sigma$. Let $l(w)$ and $u(w)$ be function such that $l(w)\leq m_\eta(w)\leq u(w)$ $\forall \ \eta \geq 0$ at any $w$. For any fixed $w>0$, we define $\psi^*(w)=\max\left\{l(w),\min\left\{\psi(w), u(w)\right\}\right\}$. Then under GPC criterion with a general loss function the estimator $\delta_{\psi^*}(\underline{X})=\ln S - \psi^*(W)$ is pitman closest to $\ln \sigma$ than the estimator $\delta_{\psi}(\underline{X})=\ln S - \psi (W)$ $\forall \ \underline{\theta} \in \Theta_0$, provided $P[l(w)\leq \psi(w)\leq u(w)]<1$ $\forall \ \underline{\theta} \in \Theta_0$.
\end{theorem}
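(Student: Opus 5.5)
The plan is to condition on $W$ and compare the two estimators pointwise in $w$ by means of Lemma \ref{pt1}. As in the display preceding the theorem, we set $\sigma=1$ without loss of generality. For any location equivariant estimators $\delta_{\psi_1},\delta_{\psi_2}$ we then have
\begin{equation*}
\mbox{GPC}(\delta_{\psi_1},\delta_{\psi_2};\underline{\theta})=E^{W}\left[R_{1,\eta}(\psi_1(W),\psi_2(W),W)\right].
\end{equation*}
So it suffices to show two things: $R_{1,\eta}(\psi^*(w),\psi(w),w)\geq 1/2$ for every $w$ and every $\eta\geq 0$, and strict inequality holds on a set of $w$ of positive probability. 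The hypothesis $P[l(W)\leq\psi(W)\leq u(W)]<1$ is what supplies that positive-probability set. The conditional problem at fixed $w$ is a location problem: the random variable $Y=\ln\sqrt V$ given $W=w$ is compared with the constants $c_1=\psi(w)$ and $c_2=\psi^*(w)$, and $F=L$ satisfies the hypotheses of Lemma \ref{pt1} by (C1). The median $m_\eta(w)$ is taken for this conditional law, in the scale used in the definition of $m_\eta(w)$.

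Fix $w$ and split into three cases according to the position of $\psi(w)$.

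\emph{Case (a):} $l(w)\leq\psi(w)\leq u(w)$. Then $\psi^*(w)=\psi(w)$, the two estimators coincide, and $R_{1,\eta}=1/2$, as noted just before the theorem.

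\emph{Case (b):} $\psi(w)>u(w)$. Then $\psi^*(w)=u(w)$, and for every $\eta\geq 0$ we have $m_\eta(w)\leq u(w)=\psi^*(w)<\psi(w)$. This is the configuration $m_Y\leq c_2<c_1$ of Lemma \ref{pt1}, which gives $R_{1,\eta}>1/2$.

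\emph{Case (c):} $\psi(w)<l(w)$. Then $\psi^*(w)=l(w)$, and $\psi(w)<\psi^*(w)=l(w)\leq m_\eta(w)$. This is the configuration $c_1<c_2\leq m_Y$, so again $R_{1,\eta}>1/2$.

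In all cases $R_{1,\eta}(\psi^*(w),\psi(w),w)\geq 1/2$, with strict inequality exactly on the event $\{\psi(w)\notin[l(w),u(w)]\}$. The essential point is that the bounds $l$ and $u$ hold uniformly in $\eta\geq 0$. This uniformity is what lets a single truncated $\psi^*$, which does not depend on $\eta$, work for every parameter in $\Theta_0$.

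Integrating over the law of $W$ gives $\mbox{GPC}(\delta_{\psi^*},\delta_\psi;\underline\theta)\geq 1/2$. The inequality is strict because the strict set has positive probability under every $\underline\theta\in\Theta_0$ by hypothesis, and the distribution of $W$ has a positive density on $\mathbb{R}$ for each $\eta$.

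The main obstacle I expect is bookkeeping rather than analysis. The statement writes the estimator as $\ln S-\psi(W)$, so the sign convention and the scale ($\ln V$ versus $\ln\sqrt V$, plus the $\ln\sigma$ shift) must be matched carefully with Lemma \ref{pt1}. Otherwise the roles of $c_1,c_2$ could be reversed relative to $m_\eta(w)$. I would first rewrite $L(\ln\sqrt V-\psi(w))$ as $F(Y-c)$ with the correct $Y$ and $c$, and check that $m_\eta(w)$ is indeed the median of that $Y$, rescaling $l$ and $u$ if necessary. A secondary point is measurability of $\psi^*$, which is immediate since $\psi^*$ is obtained from measurable functions by $\max$ and $\min$. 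Finally, the theorem restricts to $w>0$; for $w\leq 0$ I would either set $\psi^*=\psi$, which contributes exactly $1/2$, or apply the same three-case argument verbatim, since nothing in it uses the sign of $w$.
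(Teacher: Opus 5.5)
Your proposal is correct and follows the paper's proof essentially step for step. You condition on $W$, split according to whether $\psi(w)<l(w)$, $l(w)\le\psi(w)\le u(w)$, or $\psi(w)>u(w)$, apply Lemma~\ref{pt1} pointwise using the $\eta$-uniform bounds on $m_\eta(w)$, and integrate, with strictness coming from the hypothesis that the truncation set has positive probability. One small fix: (C1) does not by itself supply the hypotheses of Lemma~\ref{pt1}, since strict convexity with $L(0)=0$ does not force a minimum at $0$ (e.g.\ $L(t)=e^{t}-1$), so you need the implicit bowl-shape assumption $L\ge 0$ (equivalently $L'(t)<0$ for $t<0$ and $L'(t)>0$ for $t>0$) that the paper also relies on.
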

\noindent \textit{Proof:} The GPC of the estimator $\delta_{\psi^*}(\underline{X})=\ln S -\psi^*(W)$ relative to $\delta_{\psi}(\underline{X})=\ln S -\psi(W)$ can be written as $\mbox{GPC}(\delta_{\psi^*},\delta_{\psi},\theta)=\int_{-\infty}^{\infty}R_{1,\eta}(\psi^*(w),\psi(w),w)f_w(w|\eta) dw,\ \eta \geq 0$. Let $A = \left\{w: \psi(w)<l(w)\right\}$, $B=\left\{w: l(w)<\psi(w)<u(w)\right\}$, $C=\left\{w: \psi(w)>u(w)\right\}$. It is clear to us 
\begin{equation*}
	\psi^*(w) = \begin{cases} 
		l(w), & w \in A,\\
		\psi(w), & w\in B,\\
		u(w), & w\in C.
	\end{cases}
\end{equation*} 
Because $l(w)\leq m_{\eta}(w)\leq u(w)$ $\forall \ \eta \geq 0$ and $w$. Using Lemma \ref{pt1}, we have, $R_{1,\eta}(\psi^*(w),\psi(w),w)>\frac{1}{2}\ \forall \eta \geq 0$ provided $w \in A \cup C$.
For $w\in B$, $R_{1,\eta}(\psi^*(w),\psi(w),w)=\frac{1}{2}$ $\forall \ \eta \geq 0$. Again since $P_{\underline{\theta}}(A\cup C)>0$ $\forall \theta \in \Theta_0$ we have

\begin{align*}
	\mbox{GPC}(\delta_{\psi^*},\delta_{\psi},\theta)=&\int_AR_{1,\eta}(\psi^*(w),\psi(w),w)f_w(w|\eta) dw + \int_B R_{1,\eta}(\psi^*(w),\psi(w),w)f_w(w|\eta) dw\\
	+&\int_CR_{1,\eta}(\psi^*(w),\psi(w),w)f_w(w|\eta) dw\\
	>&\frac{1}{2} ~~~~~~\forall~ \underline{\theta}\in \Theta_0
\end{align*}

\begin{corollary}
	Let $l(w)$ and $u(w)$ be as defined as above, suppose that $P[l(w)\leq m_{0,1} \leq u(w)]<1$ $\forall \ \underline{\theta} \in \Theta_0$. For any fixed $w>0$ defined $\psi^*(w)=\max \left\{ l(w), \min\left\{ m_{0,1},u(w)\right\}\right\}=\min\left\{m_{0,1},u(w)\right\}$. Then for any every $\theta \in \Theta_0$ the estimator $\delta_{\psi^*}(\underline{X})=\ln S-\psi^*(W)$ is pitman closest to $\ln \sigma$ than the (pitman closest afine equivariant estimator) PCAEE $\delta_{p}(\underline{X})=\ln S - m_{0,1}$ under the GPC criterion.
\end{corollary}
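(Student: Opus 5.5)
The corollary is a direct specialization of the preceding theorem, with $\psi(w)\equiv m_{0,1}$. Here $m_{0,1}$ is understood as the constant defining the PCAEE, namely the median of the conditional distribution of $Z=\ln\sqrt V$ (equivalently of $\ln S-\ln\sigma$) when $\eta=0$. For an affine equivariant estimator $\ln S-c$, the GPC comparison with any other constant $c'$ reduces, through Lemma \ref{pt1}, to the position of $c,c'$ relative to the median of the pivotal quantity. The choice $c=m_{0,1}$ is therefore the Pitman closest one among affine equivariant estimators. I would record this first, because it is what justifies calling $\delta_p$ the PCAEE.

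Next I apply the preceding theorem with the constant function $\psi(w)=m_{0,1}$. The hypothesis of the theorem, $P[l(W)\le\psi(W)\le u(W)]<1$ for all $\underline\theta\in\Theta_0$, becomes exactly the assumed condition $P[l(W)\le m_{0,1}\le u(W)]<1$. The theorem then gives
\begin{equation*}
\mbox{GPC}(\delta_{\psi^*},\delta_p;\underline\theta)>\tfrac12\quad\forall\,\underline\theta\in\Theta_0,
\end{equation*}
with $\psi^*(w)=\max\{l(w),\min\{m_{0,1},u(w)\}\}$.

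What remains is the simplification $\max\{l(w),\min\{m_{0,1},u(w)\}\}=\min\{m_{0,1},u(w)\}$ for $w>0$. This requires $l(w)\le m_{0,1}$ and $l(w)\le u(w)$ there. The second inequality holds by construction. For the first, $l(w)\le m_\eta(w)$ for all $\eta\ge0$, and in particular $l(w)\le m_0(w)$. I would then argue that for $w>0$ the conditional medians satisfy $m_0(w)\le m_{0,1}$. The mechanism is the same monotone likelihood ratio fact used in Section \ref{sec2}: $f_\eta(v\mid w)/f_0(v\mid w)$ is monotone in $v$ according to the sign of $w$, which orders the conditional medians in $\eta$. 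Combined with the explicit form of the $\eta=0$ conditional law (a scaled Gamma, shifted by $\ln\sqrt{1+nw^2/2}$), this compares $m_0(w)$ with the unconditional median $m_{0,1}$. Alternatively, one can choose $l$ as small as needed, since only $l\le m_\eta$ is required. With this the $\max$ is attained by the $\min$ term, and the stated form follows.

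The main obstacle is this comparison $l(w)\le m_{0,1}$ on $w>0$. If the MLR ordering does not directly yield $m_0(w)\le m_{0,1}$, I would fall back on taking $l(w)=\inf_{\eta\ge0}m_\eta(w)$ and observe that this infimum is attained at $\eta=0$. I would then compare $m_0(w)=\mathrm{med}(\ln\sqrt U)-\ln\sqrt{1+nw^2/2}$, with $U\sim Gamma(\frac{2n-1}{2},2)$, against $m_{0,1}=\mathrm{med}(\ln\sqrt V)$, using the stochastic ordering of $V$ and $U/(1+nw^2/2)$ for large $|w|$. Everything else is the routine case analysis on the sets $A$, $B$, $C$ already done in the proof of the theorem.
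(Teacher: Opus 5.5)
Your reduction is the same as the paper's, which gives no argument beyond it. Taking $\psi\equiv m_{0,1}$ in the preceding theorem turns its hypothesis into the assumed $P[l(W)\le m_{0,1}\le u(W)]<1$. It then gives $\mbox{GPC}(\delta_{\psi^*},\delta_p;\underline\theta)>\tfrac12$ with $\psi^*=\max\{l,\min\{m_{0,1},u\}\}$.

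\textbf{The gap is in the extra step}, where you collapse $\psi^*$ to $\min\{m_{0,1},u(w)\}$ on $w>0$. The inequality you plan to prove there, $m_0(w)\le m_{0,1}$ for all $w>0$, is false. Your own formula shows it. At $\eta=0$ we have $f_0(v\mid w)\propto v^{\frac{2n-3}{2}}e^{-\frac v2(1+\frac{nw^2}{2})}$, so $(1+\tfrac{nw^2}{2})V\mid W=w\sim\chi^2_{2n-1}$. Hence, as $w\downarrow 0$,
\[
m_0(w)=\tfrac12\ln \mathrm{med}(\chi^2_{2n-1})-\tfrac12\ln\big(1+\tfrac{nw^2}{2}\big)\longrightarrow\tfrac12\ln\mathrm{med}(\chi^2_{2n-1})>\tfrac12\ln\mathrm{med}(\chi^2_{2n-2})=m_{0,1}.
\]
The MLR argument orders the $m_\eta(w)$ among themselves in $\eta$, giving $m_\eta(w)\ge m_0(w)$ for $w>0$. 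It says nothing about $m_0(w)$ versus the unconditional median $m_{0,1}$.

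Your fallback through $U/(1+nw^2/2)$ only works when $1+\tfrac{nw^2}{2}\ge \mathrm{med}(\chi^2_{2n-1})/\mathrm{med}(\chi^2_{2n-2})$. Choosing $l$ smaller is not available either: $l$ is a given function with $l\le m_\eta$ for all $\eta\ge0$, and on $w>0$ the choice $l=m_0$ is admissible (indeed the sharpest one). With that choice the maximum is attained by $l(w)$ on a whole interval $0<w<w_0$.

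In fact the simplification cannot be proved on $w>0$. There the ratio $f_\eta(v\mid w)/f_0(v\mid w)\propto e^{\sqrt{nv}\,w\eta/2}$ drives $m_\eta(w)\to+\infty$ as $\eta\to\infty$. So necessarily $u(w)=+\infty$ and $\min\{m_{0,1},u(w)\}=m_{0,1}$. The claimed equality would then read $\max\{l(w),m_{0,1}\}=m_{0,1}$, which fails for $l=m_0$ near $w=0$.

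The equality does hold on $w<0$. There the same ratio is decreasing in $v$, so $m_\eta(w)\to-\infty$, which forces $l(w)=-\infty$.

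The correct way to finish is to stop at $\psi^*=\max\{l,\min\{m_{0,1},u\}\}$, since the GPC conclusion needs nothing more. You can then record that $\psi^*$ reduces to $\min\{m_{0,1},u(w)\}$ on $w<0$ and to $\max\{l(w),m_{0,1}\}$ on $w>0$. The ``$w>0$'' in the statement appears to be a sign slip, so do not try to prove it as written.
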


Note that the following corollary provides improvements over the unrestricted BAEE $\delta_{01}(\underline{X})=\ln S -c_{0}$
\begin{corollary}
	Let $l(w)$ and $u(w)$ be as defined in above suppose $P[l(w)\leq c_{0} \leq u(w)]<1$ $\forall \ \underline{\theta} \in \Theta_0$. Define for any $w$, $\xi^*(w)=\max \left\{ l(w), \min\left\{ c_{0} , u(w) \right\} \right\}$. Then for any every $\theta \in \Theta_0$ the estimator $\delta_{\xi^*}(\underline{X})=\ln S-\xi^*(W)$ is pitman closest to $\ln \sigma$ than the (pitman closest afine equivariant estimator) PCAEE $\delta_{p}(\underline{X})=\ln S - c_{0}$ under the GPC criterion.
\end{corollary}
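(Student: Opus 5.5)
The corollary is the preceding theorem with the specific choice $\psi(w)\equiv c_0$. The plan is to check the theorem's hypotheses for this constant function and then identify $\psi^*$ with $\xi^*$.

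First, fix $l(w)$ and $u(w)$ as in the preceding theorem, so that $l(w)\le m_\eta(w)\le u(w)$ for all $\eta\ge 0$ and every $w$. Take $\psi(w)\equiv c_0$. Then $\delta_\psi=\ln S-c_0$ is exactly the estimator $\delta_p$ in the statement. The truncated function from the theorem becomes
\[
\psi^*(w)=\max\{l(w),\min\{c_0,u(w)\}\}=\xi^*(w),
\]
so $\delta_{\psi^*}=\delta_{\xi^*}$.

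Second, check the non-degeneracy hypothesis. The theorem requires $P[l(W)\le \psi(W)\le u(W)]<1$ for every $\underline{\theta}\in\Theta_0$. With $\psi\equiv c_0$ this is precisely the assumption $P[l(W)\le c_0\le u(W)]<1$ of the corollary. It guarantees that the set $A\cup C=\{w: c_0<l(w)\}\cup\{w: c_0>u(w)\}$ has positive probability under every $\underline{\theta}\in\Theta_0$.

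Third, invoke the theorem, retracing its proof briefly to confirm nothing breaks. Condition on $W=w$ and apply Lemma \ref{pt1} to $Y=\ln\sqrt V$ given $W=w$, whose median is $m_\eta(w)$.
\begin{itemize}
\item On $A$, we have $c_0<l(w)\le m_\eta(w)$, so $c_1=c_0$, $c_2=l(w)$ satisfies $c_1<c_2\le m_Y$. Hence $R_{1,\eta}>1/2$.
\item On $C$, we have $m_\eta(w)\le u(w)<c_0$, and the second case of the lemma gives $R_{1,\eta}>1/2$.
\item On $B$, the two estimators coincide, so $R_{1,\eta}=1/2$.
\end{itemize}
Integrating over the marginal density of $W$ and using $P_{\underline{\theta}}(A\cup C)>0$ yields $\mathrm{GPC}(\delta_{\xi^*},\delta_p;\underline{\theta})>1/2$ for all $\underline{\theta}\in\Theta_0$.

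The only step needing care is keeping the sign conventions consistent: the estimators are written as $\ln S-\psi(W)$, while the median $m_\eta(w)$ is defined for $\ln V$ rather than $\ln\sqrt V$. The comparison in Lemma \ref{pt1} must therefore use the centering constant and the median on the same scale. I expect this bookkeeping, rather than any new estimate, to be the only obstacle. I would handle it by noting that the theorem is already stated with $l,u$ bracketing the relevant conditional median, so the corollary inherits whatever convention the theorem uses.
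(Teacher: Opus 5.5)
Your proposal is correct and matches the paper's argument. The paper gives no separate proof, and the corollary is simply the preceding theorem with the constant choice $\psi\equiv c_0$: the hypothesis $P[l(W)\le c_0\le u(W)]<1$ is exactly what gives $P_{\underline{\theta}}(A\cup C)>0$, and your flag about the median convention resolves correctly by reading $m_\eta(w)$ as the conditional median of $\ln\sqrt{V}$, which is what the paper's displayed conditional density actually describes.
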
 

\section{A simulation study: comparing the risk performance of improved estimators}\label{sec5}
In this section we will study the risk performance of the obtained estimators $\delta_{S}$, $\delta_{\phi_0}$ with the respect to the BAEE $\delta_0$. For this numerical comparison we have generated $70,000$ random samples from two normal population namely $N(\mu_1, \sigma^2)$ and $N(\mu_2,\sigma^2)$ fro various values of $(\mu_1, \mu_2)$ and $\sigma^2$. Observe that the risk function of the improved estimators are depends on the parameter $\mu_1$, $\mu_2$ and $\sigma^2$ by $\eta=\frac{\mu_2-\mu_1}{\sigma}$. As $\mu_2\geq\mu_1$, so $\eta\geq0$ hence by without loss of generality we may assume that $\mu_1=0$, $\sigma=1$. The results of the numerical simulation studies are presented by graphs for various values of $n$. The performance of the improved estimators are measured by calculating the relative risk improvement (RRI) with the respect to the BAEE. The RRI of any estimator $\delta$ with the respect to the BAEE $\delta_{d_0}$ is defined by
\begin{equation*}
	\mbox{RRI}(\delta)=\frac{\mbox{Risk}(\delta_{d_0})-\mbox{Risk}(\delta)}{\mbox{Risk}(\delta_{d_0})}\times100
\end{equation*}
In this study, we have plotted the RRI of the improve estimators for $\ln\sigma$ in Figures \ref{fig:Q} and \ref{fig:L} under the loss functions $L_1(t)$ and $L_2(t)$ respectively. For the linex loss function, we have plotted the graphs for $a_1=-3$. We now present the key findings under the quadratic loss function $L_1(t)$ as shown in Figure \ref{fig:Q}. 
\begin{enumerate}
	\item [(i)] The RRI of $\delta_{S1}$ is decreasing function of $\eta$. However, RRI of the estimator $\delta_{\psi_{*1}}$ is not monotone in $\eta$. The RRI increases as $\eta$ increasing and achieves its maximum in the region $0.5\leq \eta \leq 1.5$.
	\item [(ii)] Notice that, for a narrower range of parameter values where $\delta_{S1}$ estimator outperform $\delta_{\psi_{*1}}$, the risk improvement of $\delta_{S1}$ may also exceed the maximum risk improvement of $\delta_{\psi_{*1}}$.
	\item [(iii)] The estimator $\delta_{S1}$ have the high risk improvement when $\eta$ is very small, but its performance decreases sharply when $\eta$ increases. On the other hand, the estimator $\delta_{\psi_{*1}}$ attains its maximum improvement at moderate values of $\eta$ and then its improvement regions is decreases as $\eta$ increases.
	\item [(iv)] Another noticeable observation is the values of the RRI decreases as the sample size increases or the value of the parameter $\eta$ away from the origin.
\end{enumerate}
The RRI of the proposed estimators under the Linex loss function $L_2(t)$ demonstrate similar behaviour in the simulation results (see Figure \ref{fig:L}).

We now study the risk performance of the estimator $\delta_{RML}$ with respect to the MLE $\delta_{ML}$. For this purpose, we plot the RRI of $\delta_{RML}$ for $\ln \sigma$ in Figures \ref{RML1} and \ref{RML2} under the quadratic and Linex loss functions respectively. For Linex loss function $L_2(t)$, we have analyzed the simulation study for different values of the parameter $a_1$, specifically $a_1=-3,-2,2,4$.  We no present the fallowing observations from the Figure \ref{RML1}.
\begin{enumerate}
	\item [(i)] RRI of $\delta_{RML}$ is decreasing function of $\eta$. 
	\item [(ii)] The RRI is highest for very small values of $\eta$ and decreases monotonically as $\eta$ increases. This implies that the advantage of $\delta_{RML}$ becomes negligible for larger values of the parameter $\eta$.
	\item [(iii)] Note that the magnitude of the RRI decreases as the sample size increases. Thus, the estimator $\delta_{RML}$ gives higher risk improvement when sample sizes is very small.
\end{enumerate}
We can observe the similar behaviour in RRI of $\delta_{RML}$ for the Linex loss function $L_2(t)$.

%########################################################################
\begin{figure}[htbp]
	\centering
	
	% -------- Row 1 --------
	\subfigure[$n=8$]{
		\includegraphics[width=0.37\textwidth]{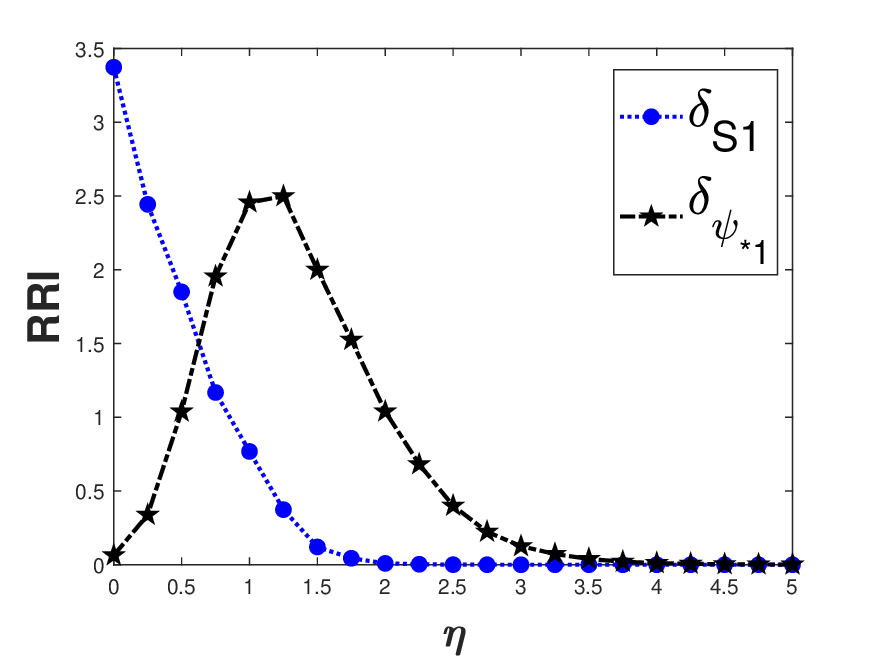}
	}
	\hfil
	\subfigure[$n=15$]{
		\includegraphics[width=0.37\textwidth]{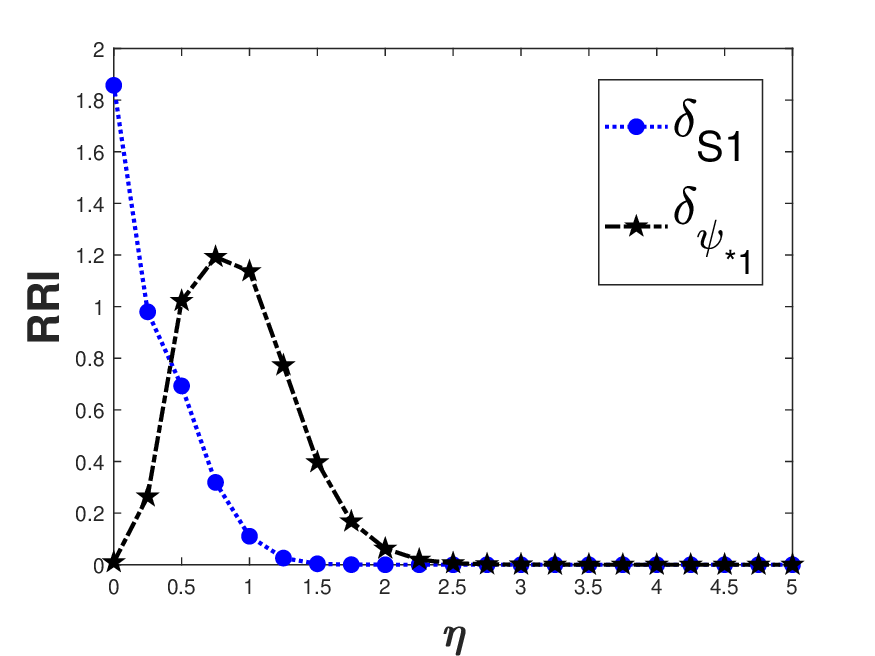}
	}
	\vspace{0.2cm}
	
	% -------- Row 2 --------
	\subfigure[$n=21$]{
		\includegraphics[width=0.37\textwidth]{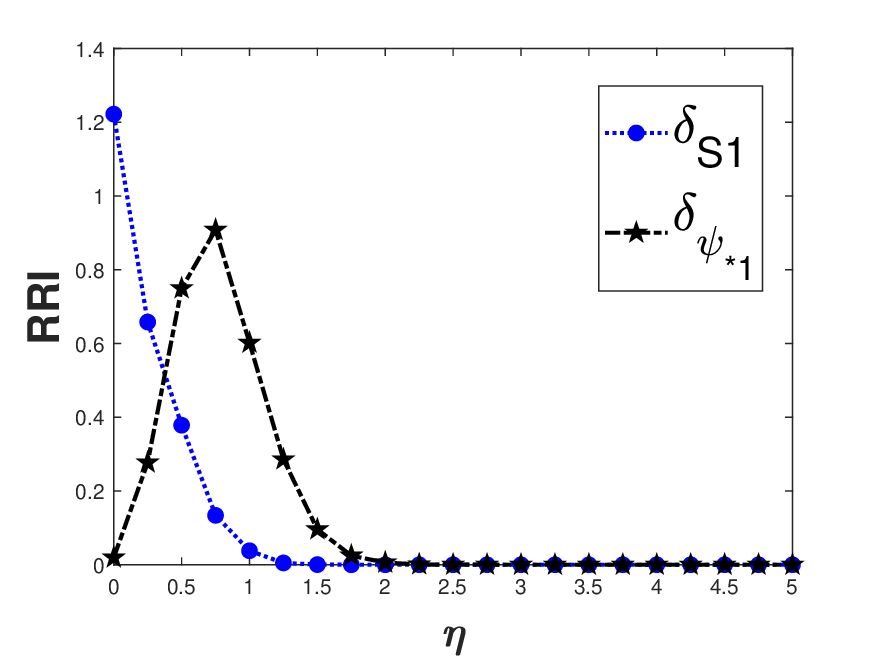}
	}
	\hfil
	\subfigure[$n=26$]{
		\includegraphics[width=0.37\textwidth]{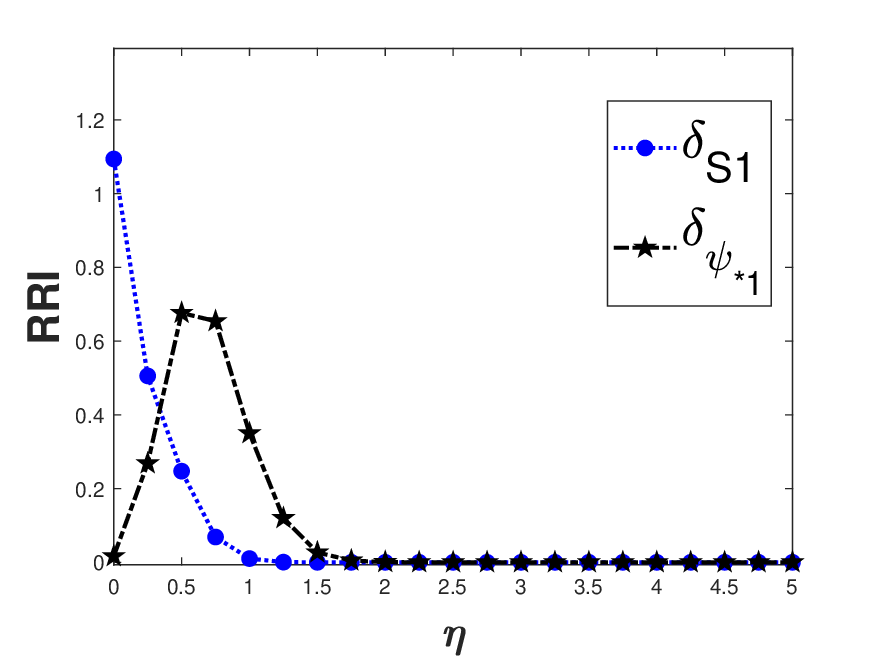}
	}
	\vspace{0.2cm}	
	\caption{RRI of various estimators of $\ln\sigma$ with respect to BAEE under $L_1(t)$}
	\label{fig:Q}
\end{figure}

%#############################################################################################
\begin{figure}[htbp]
	\centering
	
	% -------- Row 1 --------
	\subfigure[$n=8$]{
		\includegraphics[width=0.37\textwidth]{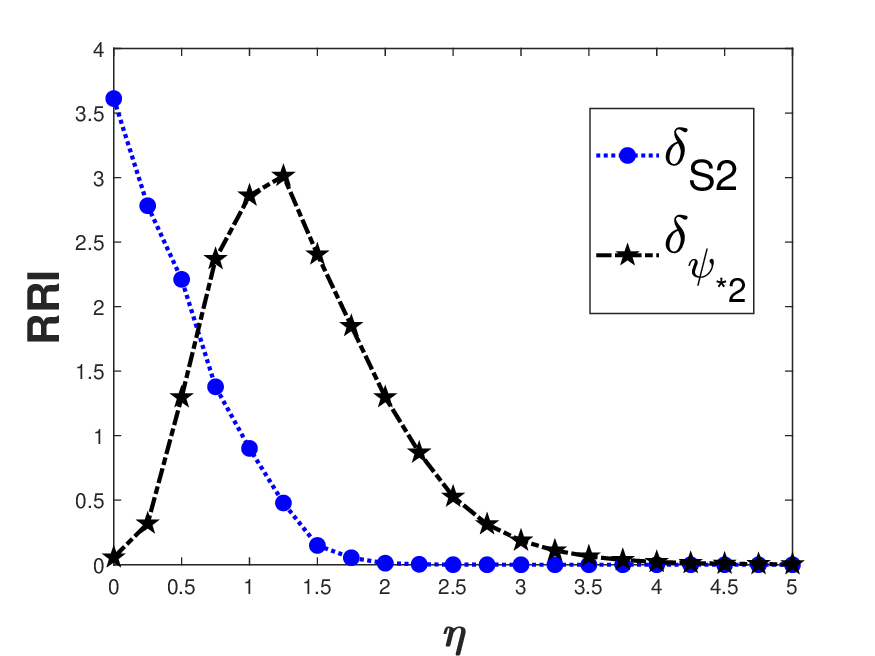}
	}
	\hfil
	\subfigure[$n=15$]{
		\includegraphics[width=0.37\textwidth]{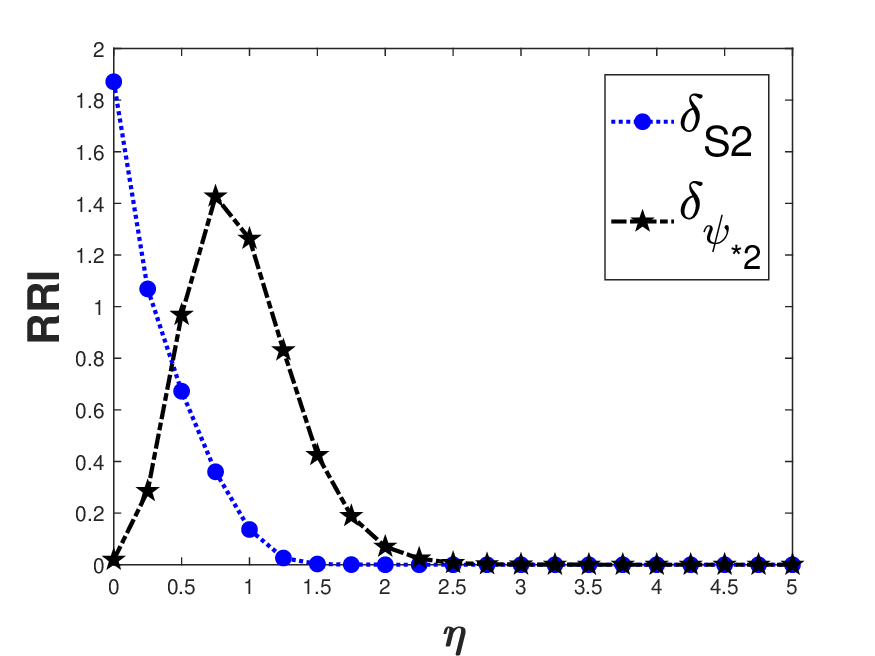}
	}
	\vspace{0.2cm}
	
	% -------- Row 2 --------
	\subfigure[$n=21$]{
	\includegraphics[width=0.37\textwidth]{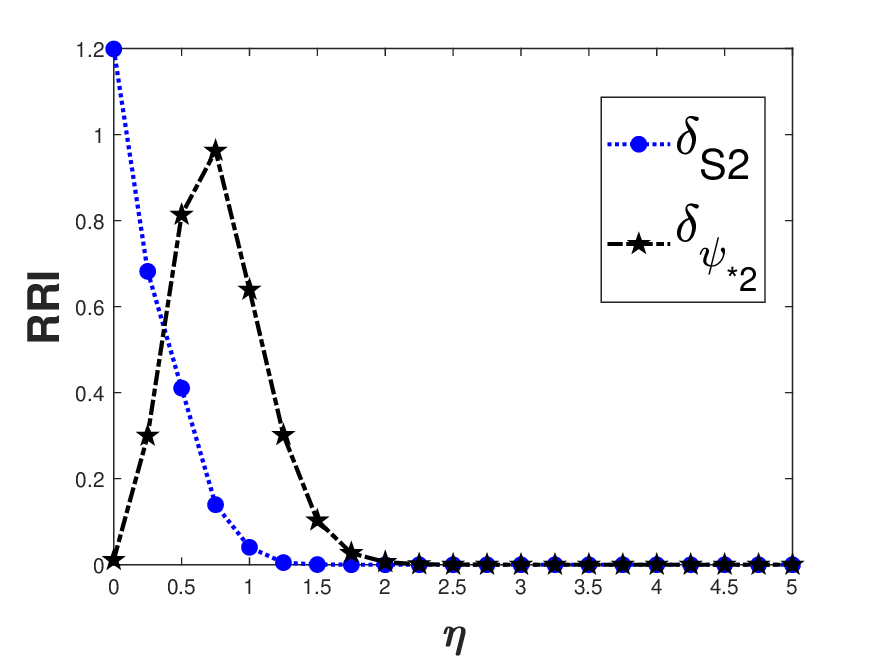}
	}
	\hfil
	\subfigure[$n=26$]{
		\includegraphics[width=0.37\textwidth]{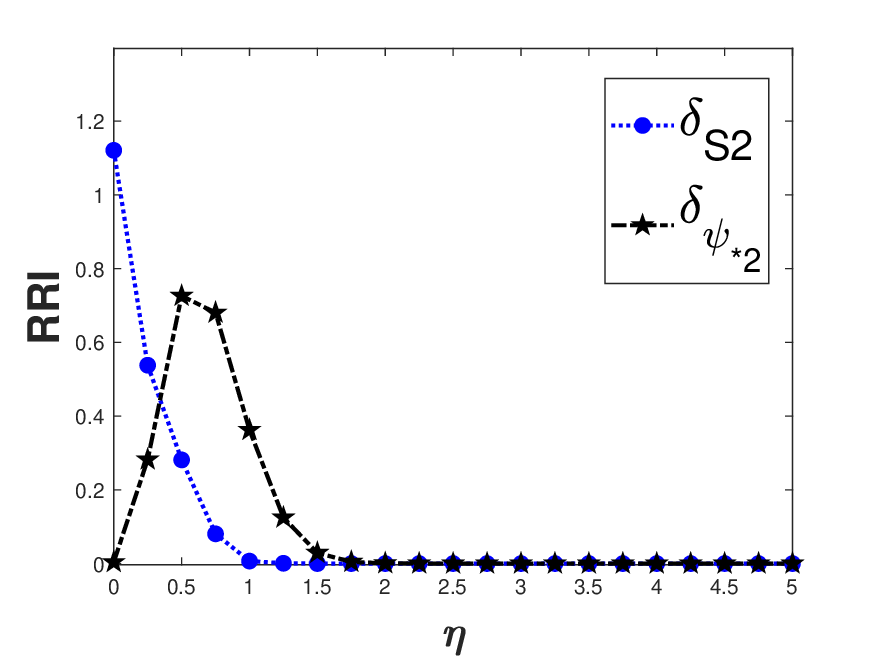}
	}
	\vspace{0.2cm}
	\caption{RRI of various estimators $\ln\sigma$ with respect to BAEE under $L_2(t)$ for $a_1=-3$}
	\label{fig:L}
\end{figure}
%%########################################################################################
\begin{figure}[htbp]
	\centering
	\includegraphics[width=0.5\textwidth]{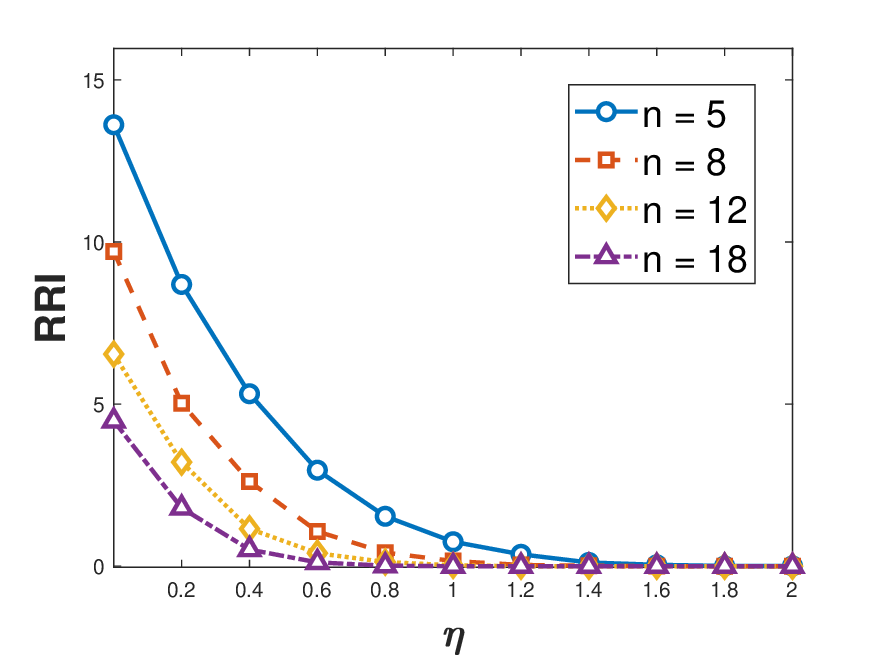}
	\caption{RRI of RMLE w.r.t MLE for sample sizes 
		$n=5,8,12,$ and $18$ under $L_1(t)$}
	\label{RML1}
\end{figure}
%%########################################################################################
\begin{figure}[htbp]
	\centering
	% -------- Row 1 --------
	\subfigure[$a_1=-3$]{
		\includegraphics[width=0.37\textwidth]{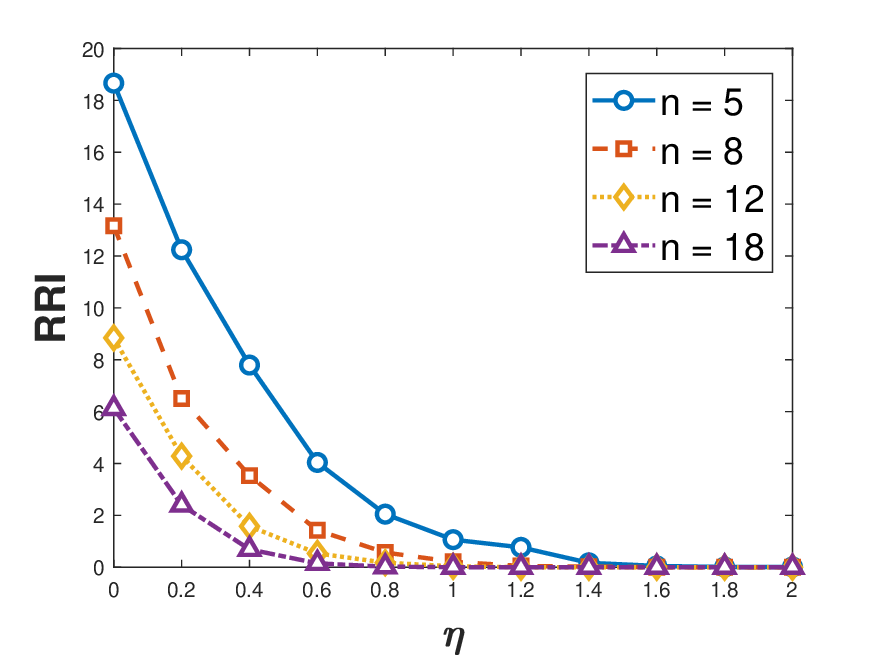}
	}
	\hfil
	\subfigure[$a_1=-2$]{
		\includegraphics[width=0.37\textwidth]{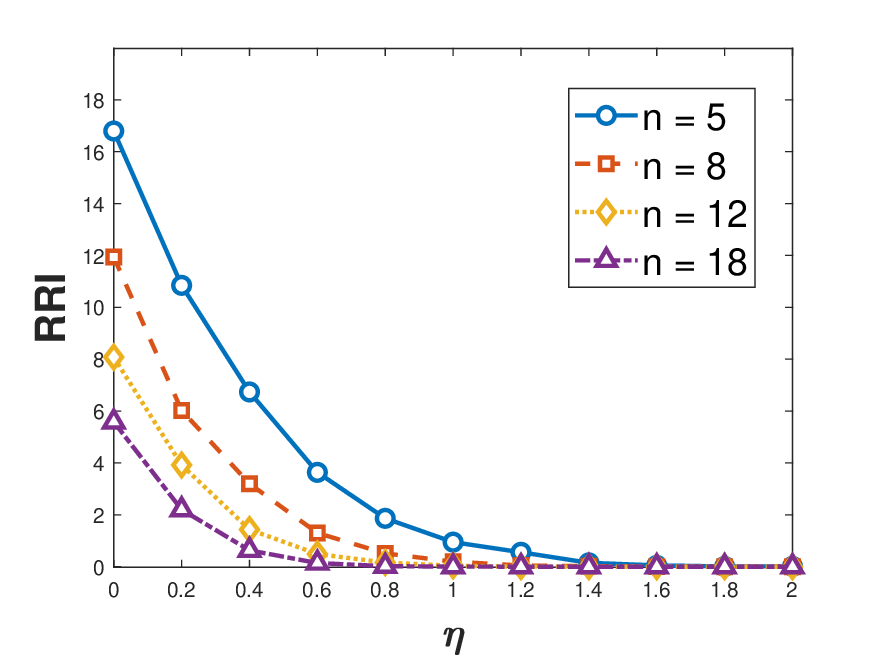}
	}
	\vspace{0.2cm}
	
	% -------- Row 2 --------
	\subfigure[$a_1=2$]{
		\includegraphics[width=0.37\textwidth]{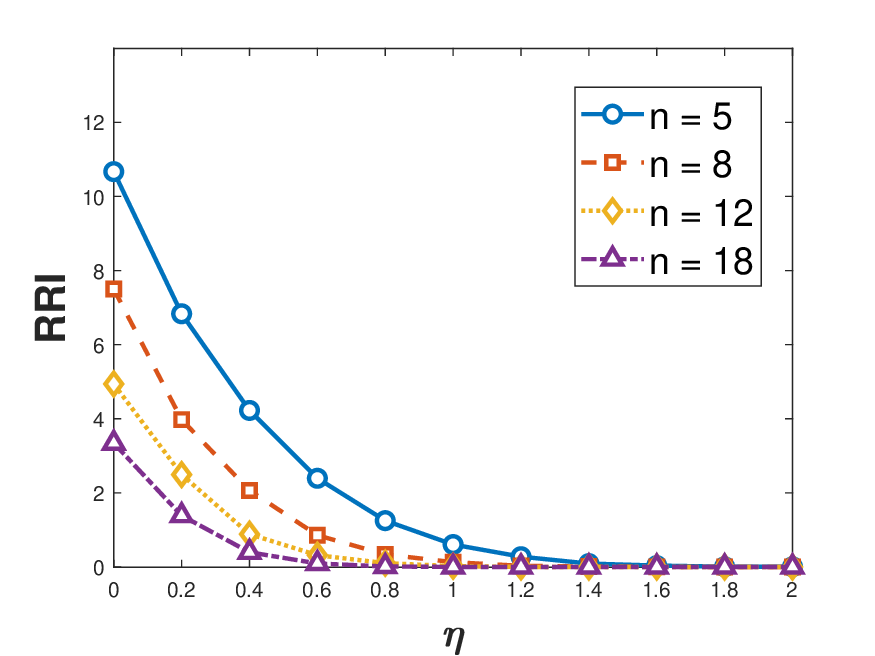}
	}
	\hfil
	\subfigure[$a_1=4$]{
		\includegraphics[width=0.37\textwidth]{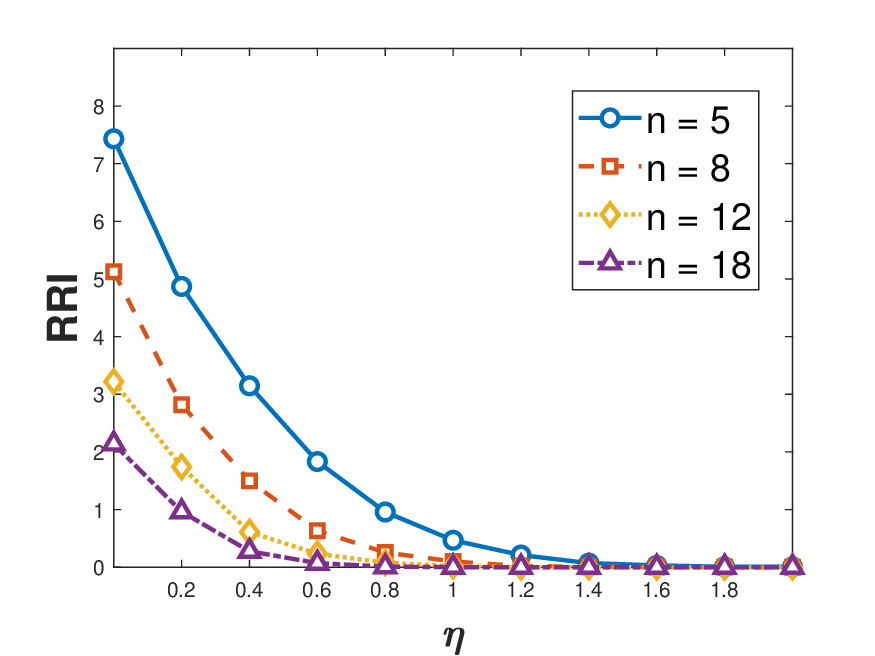}
	}
	\vspace{0.2cm}
	\caption{RRI of RMLE w.r.t MLE for sample sizes $n=5,8,12,$ and $18$ under $L_2(t)$ for various values of $a_1$}
	\label{RML2}
\end{figure}

\section{Confidence interval estimation}\label{sec6}

\subsection{Asymptotic confidence intervals}\label{sec61}
In this section we derive an asymptotic confidence interval for the parameter $\ln \sigma$. To do so we consider the the log-likelihood function based on the observations $\underline{x}_1=(x_{11},x_{12},\dots,x_{1n})$ and $\underline{x}_2=(x_{21},x_{22},\dots,x_{2n})$ taken from the population $N(\mu_1,\sigma^2)$ and $N(\mu_2,\sigma^2)$ which is given by
\begin{equation}
	\begin{array}{ll}
		l(\mu_{1},\mu_{2},\sigma)&= -\ n\ln(2\pi)- n\ln\sigma^2-\frac{1}{2\sigma^2} \ -\\
		&\left[\sum_{i=1}^{n}\left(x_{1i}-\mu_1\right)^2+\sum_{i=1}^{n}\left(x_{2i}-\mu_2\right)^2\right]
	\end{array}
\end{equation}
Denote $\theta_1=\mu_{1}$, $\theta_2=\mu_{2}$ and $\theta_3=\sigma$. With the respect to this notation, the Fisher information matrix of the model is obtain as 
$$I(\theta_1,\theta_2,\theta_3)=(I_{ij})$$
where, $I_{ij}=E(P_{ij})$, and $P_{ij}=-\frac{\partial^2L}{\partial\theta_i\partial\theta_j}$, $i, j =1, 2, 3$. By using the basic calculus, calculating all the above expressions and after simplification, we get the inverse of the fisher information matrix as bellow
$$I(\mu_{1},\mu_{2},\sigma)^{-1}=\begin{pmatrix}
	\frac{\sigma^2}{n}&0&0\\
	0&\frac{\sigma^2}{n}&0\\
	0&0&\frac{\sigma^2}{4n}
\end{pmatrix}$$
To find the asymptotic confidence interval of the parameter $\ln\sigma$, we apply the Delta method. Now consider $g_1(\mu_1,\mu_2,\sigma)=\ln\sigma$, such that the gradient of the function $g_1$ is nonzero. Hence $\nabla g_1(\mu_1,\mu_2,\sigma)=\left(\frac{\partial g_1}{\partial\mu_1},\frac{\partial g_1}{\partial\mu_2},\frac{\partial g_1}{\partial \sigma}\right)^t=\left(0,0,\frac{1}{\sigma}\right)^t$. Then $g_1(\hat{\mu_1}_{MLE},\hat{\mu_2}_{MLE},\hat{\sigma}_{MLE})=\ln\hat{\sigma}_{MLE}$ has the asymptotic distribution $N\left(\ln\sigma,\nabla g_1^tI^{-1}\nabla g_1\right)$, where we have
$$\nabla g_1^tI^{-1}\nabla g_1=\begin{pmatrix}
	0\\
	0\\
	\frac{1}{\sigma}
\end{pmatrix}^t\begin{pmatrix}
	\frac{\sigma^2}{n}&0&0\\
	0&\frac{\sigma^2}{n}&0\\
	0&0&\frac{\sigma^2}{4n}
\end{pmatrix}\begin{pmatrix}
	0\\
	0\\
	\frac{1}{\sigma}
\end{pmatrix}=\frac{1}{4n}
$$ 
Utilizing this result, then the $(1-\alpha)100\%$ asymptotic confidence interval of $\ln\sigma$ is obtained as 
\begin{equation}
	\ln\hat{\sigma}_{MLE} \ \pm \ z_{\alpha/2}\frac{1}{2\sqrt{n}}
\end{equation}
where $z_{\alpha}$ denotes the $\alpha$-th quantile of a standard normal random variable. 

\subsection{Parametric bootstrap confidence intervals}\label{sec62}
The bootstrap-p and bootstrap-t are two estimated confidence intervals that we generate in this portion using artificial bootstrap samples for the $\ln\sigma$. The bootstrap-p approach was introduced by \cite{efron1982jackknife} and bootstrap-t method proposed by \cite{hall1988bootstrap}. The algorithm's specifics for calculating these two approximations of confidence intervals for the logarithm of the scale parameter are found below.
\subsubsection{Bootstrap-p method}\label{sec621}
\begin{itemize}
	\item Step-I: Consider two independent random samples $(X_{11},X_{12},\dots,X_{1n})$ and $(X_{21},X_{22},\dots,X_{2n})$ from two normal populations $N(\mu_{1},\sigma^2)$ and $N(\mu_{2},\sigma^2)$ respectively. Compute the MLEs $\hat{\mu_1}_{MLE}$, $\hat{\mu_2}_{MLE}$ and $\hat{\sigma}_{MLE}$ numerically. Using these MLEs, we calculate the MLE of the parameter $\ln\sigma$ as $\eta=\ln\hat{\sigma}_{MLE}$.
	\item Step-II: By using the MLEs $\hat{\mu_1}_{MLE}$, $\hat{\mu_2}_{MLE}$ and $\hat{\sigma}_{MLE}$ to generate bootstrap samples $(X_{11}^*,X_{12}^*,\dots,X_{1n}^*)$ and $(X_{21}^*,X_{22}^*,\dots,X_{2n}^*)$ from  $N(\hat{\mu_1}_{MLE},\hat{\sigma}_{MLE}^2)$ and $N(\hat{\mu_2}_{MLE},\hat{\sigma}^2_{MLE})$ respectively, and compute the bootstrap MLEs $\hat{\mu_1}_{BMLE}$, $\hat{\mu_2}_{BMLE}$ and $\hat{\sigma}_{BMLE}$. Consequently, compute the bootstrap MLE of $\ln\sigma$ as $\eta^*=\ln\hat{\sigma}_{BMLE}.$
	\item Step-III: Repeat the step-II for a large number of times, say $K$ times and obtain the bootstrap MLEs of $\ln\sigma$ as $\eta_1^*, \eta_2^*, \dots, \eta_K^*$.
	\item Step-IV: Utilizing the bootstrap MLEs, the $(1-\alpha)100\%$ bootstrap-p confidence interval for $\ln\sigma$ is obtained as 
	$$\left(\eta^*_{Boot-p}(\alpha/2),\eta^*_{Boot-p}(1-\alpha/2)\right),$$
	where $\eta^*_{Boot-p}(x)=F_1^{-1}(x)$, $F_1(x)=P(\eta^*\leq x)$.
\end{itemize}

\subsubsection{Bootstrap-t method}\label{sec622}
\begin{itemize}
	\item Step I: First step is same as the step I of the bootstrap-p method.
	\item step II: This step is also same as step II of the bootstrap-p method. Compute $\eta^*=\ln\hat{\sigma}_{BMLE}$.
	\item Step-III: Calculate the statistics $T=\frac{\eta^*-\eta}{\sqrt{Var(\eta^*)}}$.
	\item Step-IV: Repeat the step-II and step-III for a large number of times, say $K$ times. Thus the bootstrap MLEs of $\ln\sigma$ are obtain as $\eta_1^*,\eta_2^*,\dots,\eta_K^*$ and the corresponding value of $T$ are $T_1,T_2,\dots,T_K$.
	\item Step V: The approximate $(1-\alpha)100\%$ bootstrap-t confidence interval of the parameter $\ln\sigma$ are obtain as $$\left(\eta^*_{Boot-t}(\alpha/2),\eta^*_{Boot-t}(1-\alpha/2)\right)$$
	where $\eta^*_{Boot-t}(x)=\eta + H_1^{-1}(x)\sqrt{var(\eta^*)}$, $H_1(x)=P(T\leq x)$.
\end{itemize}

\subsection{Generalized confidence intervals}\label{sec63}
In this subsection, we examine the generalized variable approach introduced by \cite{weerahandi1995generalized} for constructing generalized confidence intervals for functions of parameter(s). We begin by outlining the procedure for developing a generalized pivotal variable, which is used to compute these intervals. The definition below provides a foundation for constructing the generalized confidence intervals.

\begin{defn}
	Let $X$ be any random variable, and the distribution of $X$ depends upon the parameter $\theta$. A statistic $T=T(X;x,\theta)$ is called a generalized pivot quantity (GPQ) for finding the generalized confidence interval of $\theta$ if it satisfies the following properties.
	
	\begin{itemize}
		\item [(i)] For a fixed $X=x$, the distribution of $T(X;x,\theta)$ is independent of unknown parameter.
		\item [(ii)] The observed value of $T$ (when $X=x$) is the parameter of interest $\theta$.
	\end{itemize}
\end{defn}
We next present some generalized pivotal quantities for constructing confidence intervals of $\ln \sigma$.

%\subsection{Generalized confidence intervals for $\ln \sigma$ }
Let $(x_1,x_2,s^2)$ be observed value of the sufficient statistics $(X_1,X_2,S^2)$. Then the GPQ of the parameter $\ln \sigma$, based on the sufficient statistics $(X_1,X_2,S^2)$ that uses the unbiased estimator $\delta_{U}$ is given by
\begin{equation*}
	\delta_{U}=\ln S - \frac{1}{2} \left[ \ln 2 + \psi(n-1)\right].
\end{equation*}
As $V=S^2/\sigma^2$, then we have 
\begin{equation*}
	\delta_{U}=\ln \sigma + \frac{1}{2}\ln V - \frac{1}{2} \left[ \ln 2 + \psi(n-1)\right].
\end{equation*}
So the GPQ based on the UMVUE is 
\begin{equation}
	T_{U}=\delta_U^{\tiny{\text{obs}}} - \frac{1}{2}\ln V + \frac{1}{2} \left[ \ln 2 + \psi(n-1)\right].
\end{equation}

where $\delta_U^{\tiny{\text{obs}}}$ is the observed values of the UMVUE and $V$ is generated from a known chi-square distribution with degrees of freedom $2(n - 1)$. Observed that, for a fixed $(x_1,x_2,s^2)$, the distribution of $T_U$ is free from all unknown parameters. Further, when $(X_1,X_2,S^2)=(x_1,x_2,s^2)$, the value of $T_U$ is $\ln \sigma$, which is the parameter of interest. Hence we obtain the $(1-\alpha)100\%$ confidence interval for $\ln \sigma$ as
\begin{equation}
	\left(T_U(\alpha/2), T_U(1-\alpha/2)\right).
\end{equation}

%\textcolor{blue}{Next, ..............some plug in to GCI as in  }

%\section{Fiducial confidence intervals}
\subsection{HPD credible interval using MCMC method}\label{sec64}
In this section, we will obtained the HPD credible interval of the parameter $\ln \sigma$ using the posterior density under Jeffrey's prior. (Note that this non-informative prior distribution is used for deriving the Bayes estimator of the common mean $\mu$ in \cite{mitra2007some}).

Consider two normal random samples $\b{X}_1=(X_{11},X_{12},\dots,X_{1n})$ and $\b{X}_2=(X_{21},X_{22},\dots,X_{2n})$ taken from two normal populations $N(\mu_1,\sigma^2)$ and $N(\mu_1,\sigma^2)$ respectively. Then the Jeffrey's prior for the parameter vector $(\mu_1,\mu_2,\ln\sigma)$ is derived from the Fisher information matrix $I(\mu_1,\mu_2,\ln\sigma$) as follows
\begin{equation}
	\Pi(\mu_1,\mu_2,\ln\sigma)\propto \frac{1}{\sigma^2},\;~~ \mu_1,\mu_2 \in \mathbb{R},~ \sigma>0
\end{equation}
The joint posterior density of $(\mu_1,\mu_2,\ln\sigma)$ for given sample values is proportional to the product of the likelihood function and the Jeffreys prior and can be expressed as
\begin{equation}
	\begin{array}{ll}
	\Pi\left((\mu_1,\mu_2,\ln\sigma)|\mbox{Data}\right) \propto\frac{e^{-\frac{1}{2\sigma^2}\left[\sum_{i=1}^{2}\sum_{j=1}^{n}(x_{ij}-\mu_1)^2\right]}}{\sigma^{2n+2}}
	\end{array}
\end{equation}
From the joint posterior density of $(\mu_1,\mu_2,\ln \sigma)$, the corresponding conditional densities of the parameters are obtained as follows:
\begin{enumerate}
	\item [(i)]  given $\mu_2$, $\ln \sigma$ and the sample the conditional density of $\mu_1$ is 
	\begin{equation}
		\Pi\left(\mu_{1}|(\mu_2,\ln\sigma,\mbox{Data})\right) \sim N\left(X_1,\frac{\sigma^2}{n}\right)
	\end{equation}
	
	\item [(ii)] given $\mu_1$, $\ln \sigma$ and the sample the conditional density of $\mu_2$ is 
	\begin{equation}
		\Pi\left(\mu_{2}|(\mu_1,\ln\sigma,\mbox{Data})\right) \sim N(X_2,\frac{\sigma^2}{n})
	\end{equation}
	
	\item [(i)] given $\mu_1$ $\mu_2$ and the sample the conditional density of $\ln \sigma$ is 
	\begin{equation}
		\Pi\left(\ln\sigma|(\mu_1,\mu_2,\mbox{Data})\right) \propto\frac{e^{-\frac{1}{2\sigma^2}\left[\sum_{i=1}^{2}\sum_{j=1}^{n}(x_{ij}-\mu_1)^2\right]} }{\sigma^{2n+2}}
	\end{equation}
\end{enumerate}
The conditional density of $\ln\sigma$ do not belongs to any standard parametric family. However, the distributions of $\mu_1$ and $\mu_2$ given the location parameter and the observed data is normal. To derive the confidence interval of $\ln\sigma$, Gibbs sampling is used to generate samples of $\mu_1$ and $\mu_2$. However, the parameter $\ln\sigma$ is generated using a Markov chain Monte Carlo (MCMC) approach along with the Random Walk Metropolis–Hastings algorithm. For details of the Random Walk Metropolis–Hastings algorithm, we refer to \cite{hastings1970monte} and \cite{chib1995understanding}. The Gibbs sampling and Random Walk Metropolis–Hastings procedures are described as follows.

\textbf{Gibbs Sampling Algorithm:} At iteration $k-1$, let the state of the Markov chain be $\underline{\delta}^{(k-1)}=\left(\mu_1^{(k-1)}, \mu_2^{(k-1)}, (\ln \sigma)^{(k-1)}\right)$. Then for $k=1,2,\dots,N$, the values of $\mu_1^{(k)}$ and $\mu_2^{(k)}$ are generated as follows:
\begin{equation}
		\mu_1^{(k)}\sim N\left(X_1,\frac{(\sigma^2)^{(k-1)}}{n}\right)\sim N\left(X_1,\frac{\beta^{(k-1)}}{n}\right)
\end{equation}
\begin{equation}
	\begin{aligned}
		\mu_2^{(k)}
		&\sim N\!\left(X_2,\frac{(\sigma^2)^{(k-1)}}{n}\right) \sim N\!\left(X_2,\frac{\beta^{(k-1)}}{n}\right), \\
		&~~~~~~~~~~~~~~~~~~~~~~~~~~~~~~~~~~~~~~~\text{with } \beta=\sigma^2.
	\end{aligned}
\end{equation}

where, $\beta^{(0)}$ is appropriately chosen initial values. In this study, we set $\beta^{(0)}=\frac{1}{2}(s_1^2+s_2^2)$ (poled sample variance). To update the values of $\beta^{(k)}$ we employ the following Random Walk Metropolis-Hasting algorithm.

\textbf{Random walk Metropolis-Hastings algorithm:} The Random Walk Metropolis–Hastings method can be carried out using the following steps:
\begin{itemize}
	\item Step-1: At the beginning of iteration $k$, the Markov chain is assumed to be in the state $(\mu_1^{(k)},\mu_2^{(k)},\beta^{(k-1)})$.
	
	\item Step-2: Generate sample $\beta^*$ from $N\left(\beta^{(k-1)},\sigma_{\beta}^2\right)$.
	
	\item Step-3: Calculate $\varphi=\min\left(1,\frac{\Pi\left(\mu_1^{(k)},\mu_2^{(k)},\beta^*\big|\mbox{Data}\right)}{\Pi\left(\mu_1^{(k)},\mu_2^{(k)},\beta^{(k-1)}\big|\mbox{Data}\right)}\right)$
	
	\item Step-4: Generate a uniform random sample $u\sim U(0,1)$. The proposed value $\beta^*$ is accepted if $u\leq\varphi$, and the parameters  are updated to $\beta^{(k)}=\beta^*$. Otherwise we reject the proposal and take $\beta^{(k)}=\beta^{(k-1)}$.
	
	\item Step-5: The transition from $\underline{\delta}^{(k-1)}$ to $\underline{\delta}^{(k)}$ is thus completed. Subsequently, $\theta_k=\ln\sigma^{(k)}$ is evaluated to generate posterior samples of $\ln\sigma$.
	
	\item Step-6: The above Gibbs Sampling algorithm and Random Walk Metropolis–Hastings algorithms are iterated until $M$ samples are obtained after discarding the burn-in process, that is $M=N-N_0$ where $N_0$ is the number of burn-in iteration. The resulting $M$ post–burn-in values are treated as approximately independent samples and are denoted by $\theta_1,\theta_2,\dots,\theta_M$.
\end{itemize}
To obtain the $(1-\alpha)100\%$ HPD credible interval for $\ln\sigma$, we used the method of \cite{chen1999monte}, which is given as
\begin{equation}
	\mbox{HPD}=\left(\theta_{(r^*)},\theta_{(r^*+[(1-\alpha)M])}\right)
\end{equation}
where $r^*$ can be chosen such that
\begin{align*}
	\theta_{(r^*+[(1-\alpha)M])}-\theta_{(r^*)} \min_{1\leq r\leq M-[(1-\alpha)M]}\left(\theta_{r+[(1-\alpha)M]}-\theta_{r}\right)
\end{align*}
with $\theta_{(r)}$; $r=1,2,\dots,M$ is the $r$th order statistics of $\left\{\theta_1,\theta_2,\dots,\theta_M\right\}$.

\section{A simulation study : comparing the confidence intervals}\label{sec7}
In this section we will discuss the numerical performance of the derived confidence intervals of the parameter $\ln\sigma$. We use montecarlo simulation methods to compute the coverage probability (CP) and average lengths (AL). To obtain the CPs and ALs we have generated $30,000$ random samples from two normal population of sizes $n$ with equal variances $\sigma^2$ and means $\mu_1$ and $\mu_2$ respectively. As all the intervals location invariant and the CPS and ALs are depends on $\sigma$. Therefore, without loss of generality, we set $\mu_1=\mu_2=0$ and $\sigma=1$. In this study we have computed CP and AL for various combination of samples. For computing the bootstrap confidence intervals, the inner loop $(K)$, corresponding to the replication is taken up-to $3,000$ times. In the case of generalized confidence intervals, the inner loop is repeated $10,000$ times. Similarly, for computing the HPD credible intervals using the MCMC algorithm, we consider $10,000$ replications in the inner loop. Moreover, all the confidence intervals are computed at the significance confidence level $(1-\alpha)=0.95$. We have observed that the ALs of the bootstrap-t and bootstrap-p are equal but the CPs of the bootstrap-t intervals are greeter than the bootstrap-p intervals (see Figure \ref{bp} and \ref{bt}). When we examine the performance of the confidence intervals in terms of CPs and ALs, we find that some intervals achieves the significance confidence levels $(0.95)$, but their ALs are quite large. In the other cases, some confidence intervals show their ALs are quit acceptable but their CPs are bellow the significance level. To overcome this situation, we will follow the following approaches. First we may fix a minimum acceptable confidence level for CP and consider only those intervals which achieves this level, after that we select the interval which have the smallest AL. In the second approach we consider those intervals that reach the minimum acceptable significance level and then evaluate them using a unified criterion of measure called the probability coverage density (PCD) which defined as the ratio of CP and AL. This unified criterion was proposed by \cite{unhapipat2016small}. This unified criterion of performance measure will take care of both quantities AL and  CP at the same time. This criterion finds the intervals with lower ALs but greater CPs. It is obvious that an interval with the higher PCD value have the best performance. We have conduct the simulation study for many different sample sizes. We conclude the following observation from the Figure \ref{p1}, \ref{bp}, \ref{bt}, \ref{p4} and \ref{p6}. 

Based on our simulation study we can conclude the following observation. 
\begin{itemize}
	\item [(i)] All the ALs are decreasing function of $n$. The CPs are not monotonic in $n$. The values of PCD are increasing when $n$ is increasing.
	
	\item [(ii)] We have set a significance level at $0.95$, then bootstrap-t and the generalized confidence intervals has attain it.
	
	\item [(iii)] The CPs lies between $65\%$ and $95\%$. The CPs of the bootstrap-t and generalized confidence intervals are always close to 0.95.
	
	\item [(iv)] If we fixed the confidence level at $95\%$, the bootstrap-t and generalized confidence interval achieve this. If we adopt a more liberal confidence level of $90\%$, then all confidence intervals attain the nominal level provided that $n>20$ (approximately).
	
	\item [(v)] Ranking the intervals in terms of the shortest ALs, we obtain the following order: asymptotic, HPD, bootstrap-t (or bootstrap-p) and generalized confidence intervals.
	
	\item [(vi)] If we rank the intervals in the basis of highest CPs, we get the following order: generalized confidence intervals, bootstrap-t, HPD, Asymptotic confidece interval and bootstrap-p confidence intervals.
	
	\item [(vii)] If we want to rank all the intervals in the basis of both shortest ALs and highest CPs then a clear ranking is not possible. In this challenging situation, we use the PCD criterion to select the most appropriate interval.
\end{itemize}
%######################################################################
\begin{figure}[!t]
	\centering
	\includegraphics[width=0.8\textwidth]{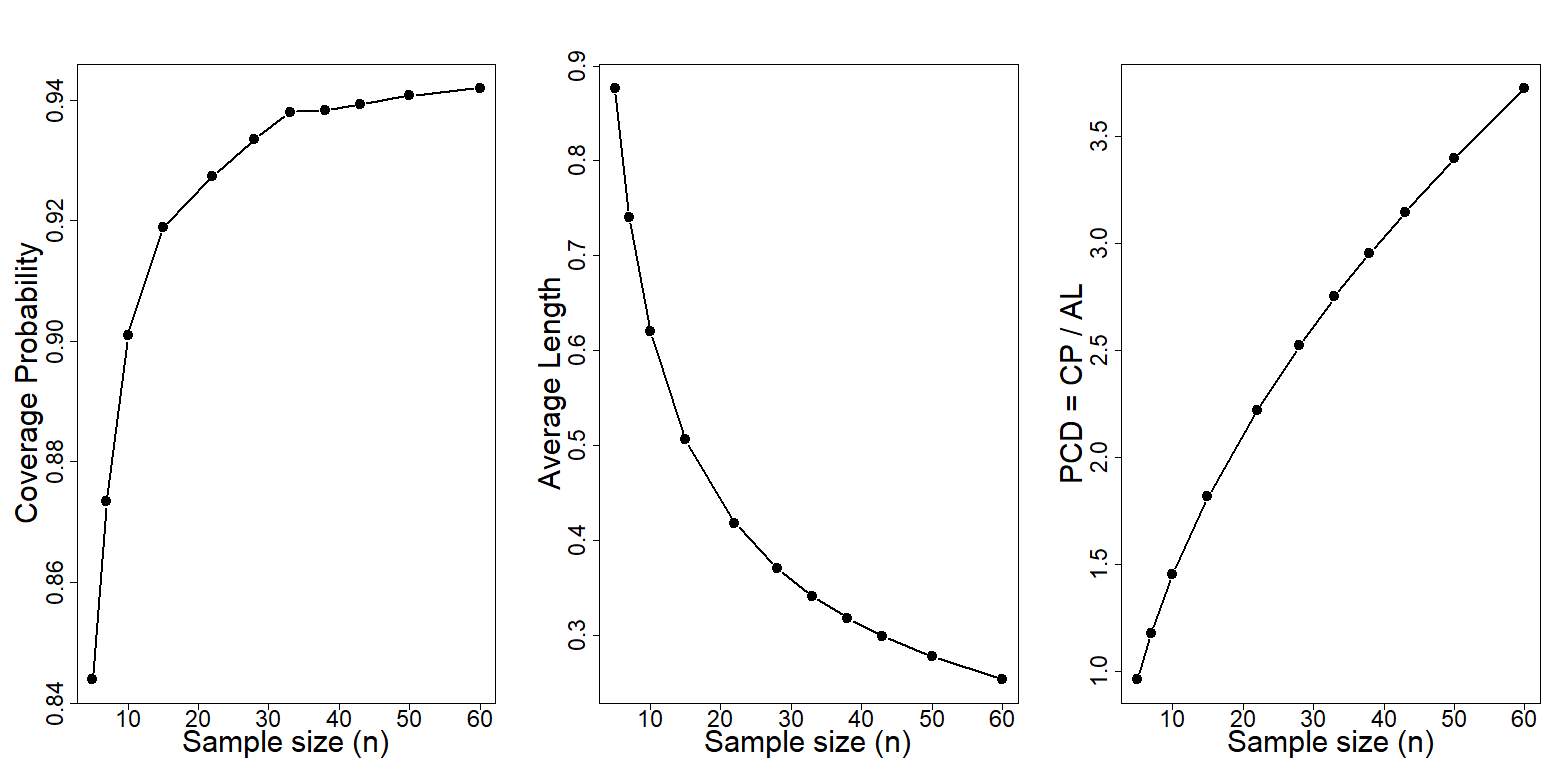}
	\caption{Asymptotic confidence interval}
	\label{p1}
\end{figure}
\vspace{1cm}
\begin{figure}[!t]
	\centering
	\includegraphics[width=0.8\textwidth]{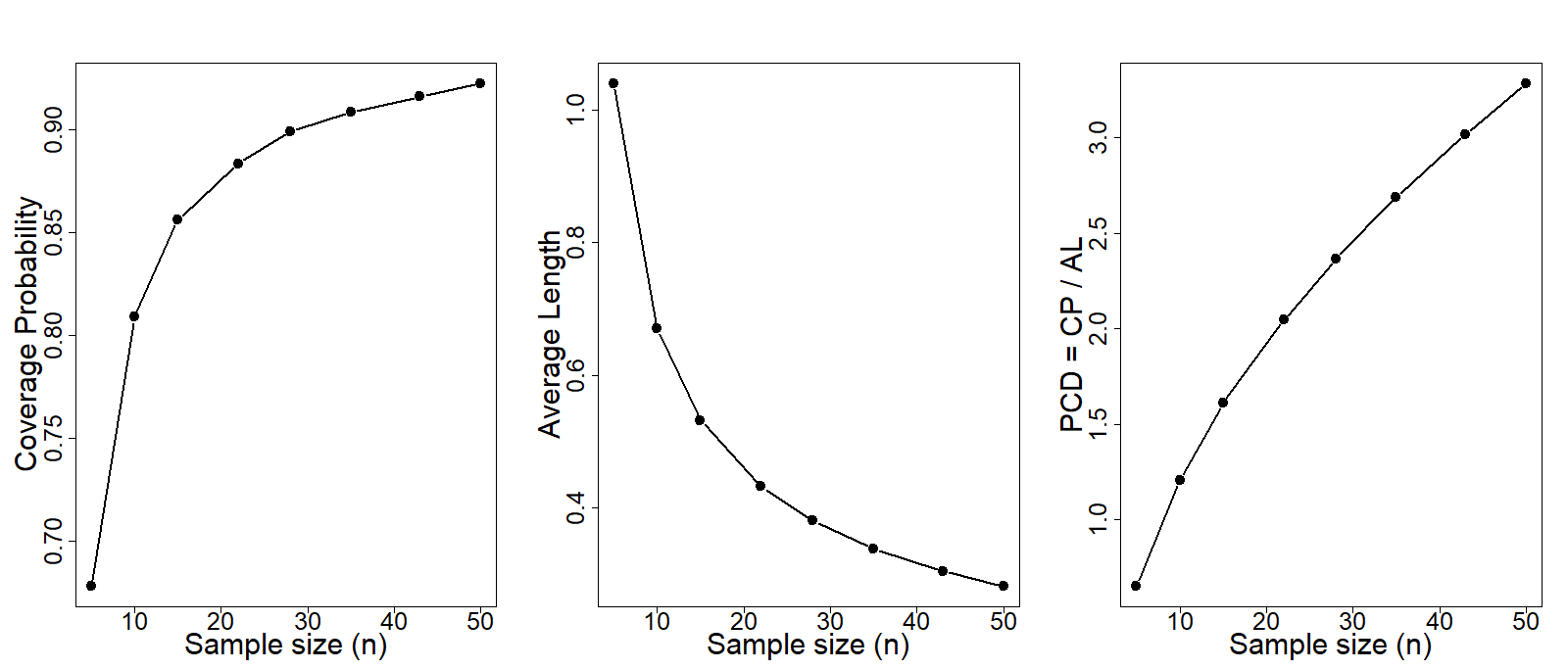}
	\caption{Bootstrap-p confidence interval}
	\label{bp}
\end{figure}
\vspace{1cm}
\begin{figure}[!t]
	\centering
	\includegraphics[width=0.8\textwidth]{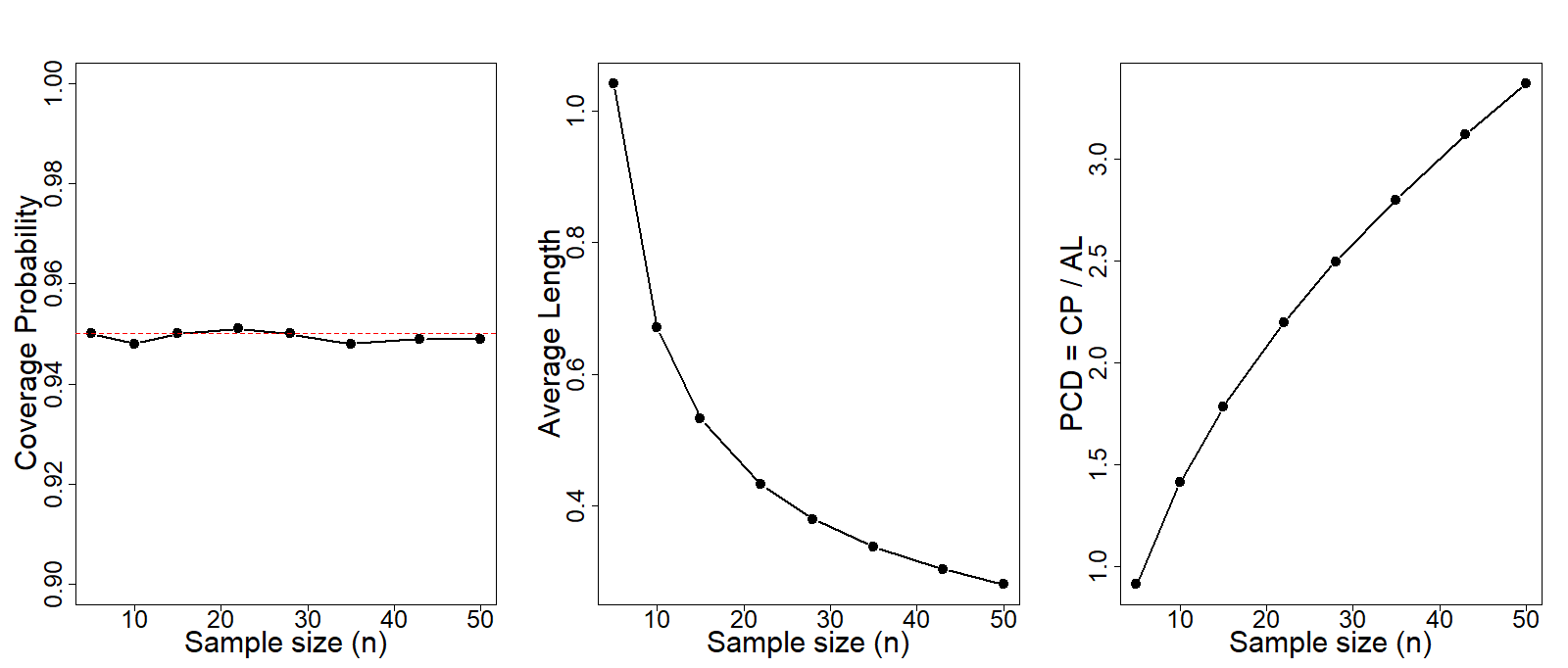}
	\caption{Bootstrap-t confidence interval}
	\label{bt}
\end{figure}
\vspace{1cm}
\begin{figure}[!t]
	\centering
	\includegraphics[width=0.8\textwidth]{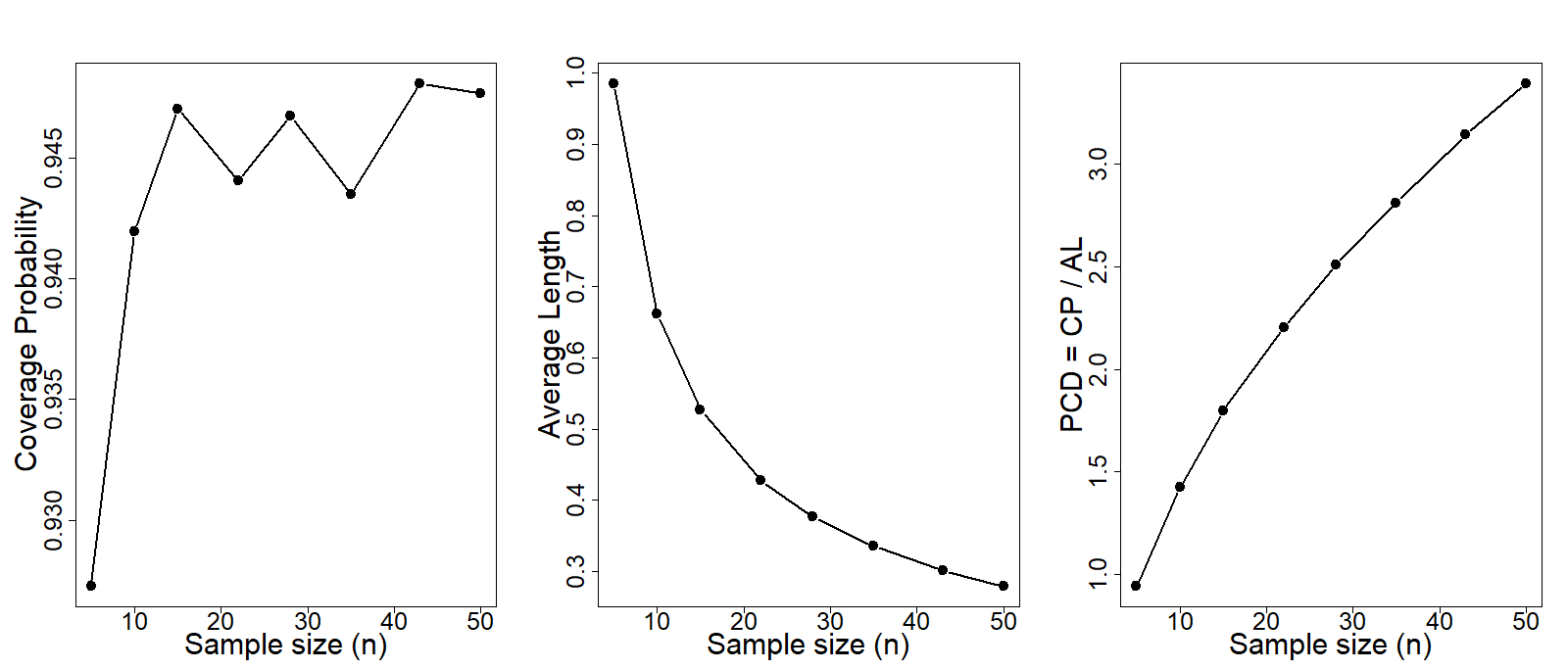}
	\caption{HPD credible interval}
	\label{p4}
\end{figure}
\vspace{1cm}
\begin{figure}[!t]
	\centering
	\includegraphics[width=0.8\textwidth]{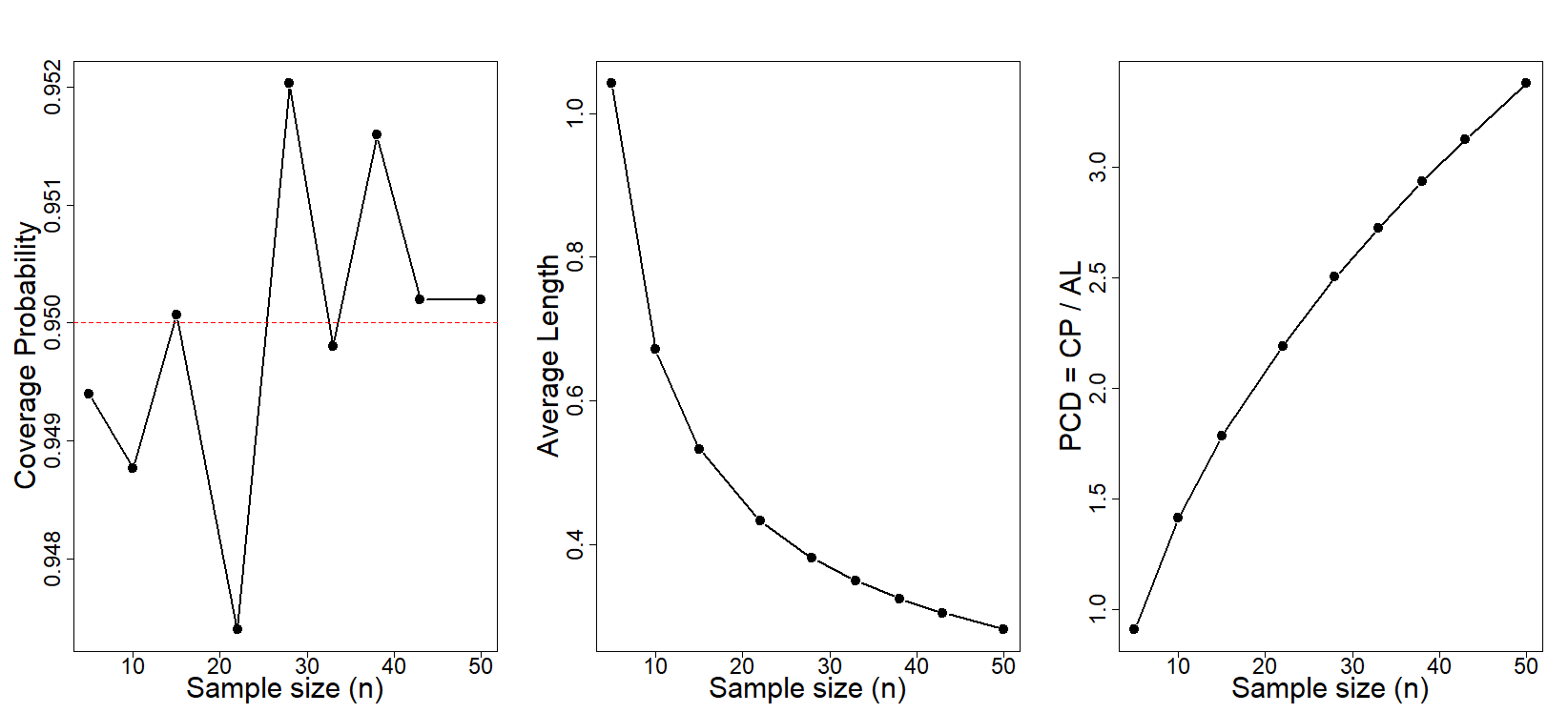}
	\caption{Generalized confidence interval using UMVUE}
	\label{p6}
\end{figure}
\section{Real life Data analysis} \label{sec8}
In this section, we present a real-life example to illustrate the applicability of the proposed results. For data analysis, we consider failure times (in hours) of the air-conditioning systems of Boeing 720 jet planes “7907” and “7916”. Here $n = 6$ (see \cite{proschan1963theoretical}).
\begin{table}[h!]
	\centering
	\caption{Failure times (in hours) of the air-conditioning systems of two planes}\label{tabl0}
	\begin{tabular}{@{}c c @{}}
		\toprule
		Planes ~~~~~~~~~~~~~~~~~~& Failure times (in hours) \\
		\hhline{==}
		Data 1 (Plane 7907) & 194, 5, 41, 29, 33, 181 \\
		Data 2 (Plane 7916) & 50, 254, 5, 283, 35, 12 \\
		\bottomrule
	\end{tabular}
\end{table}
Using the Kolmogorov-Smirnov test at a significance level of 0.05, we found that both datasets satisfy the normality assumption, with p-values of 0.374 and 0.405 for Data 1 and Data 2, respectively. We have checked the equality of the variances of these two datasets by F-test at the 0.05 significance level and the hypothesis $\mu_1\le \mu_2$ was tested using the t-test at a significance level 0.05. We have obtain the values of improved estimators in Table \ref{table1}. Also we have calculate values of the confidence intervals of $\ln \sigma$ in Table \ref{table2}.
\begin{table}[h!]
	\centering
	\caption{Estimated values of the improved estimators of $\ln\sigma$}\label{table1}
	\begin{tabular}{@{}cccc@{}}
		\toprule
		Loss~~~~~~~~~~~~~~~& $\delta_{01}$ & $\delta_{S}$ & $\delta_{\phi_{*}}$ \\
		\hhline{====}
		$L_1(t)~~~~~~~~~~~~~~$ & 4.7293 & 4.6768 & 4.6768 \\
		$L_2(t)\ (a_1=-3)$  & 4.8233 & 4.7603 &4.7603 \\
		$L_2(t)\ (a_1=-2)$  &4.7892 & 4.7303  &4.7303  \\
		$L_2(t)\ (a_1=2)~~~$  & 4.6776 & 4.6300 & 4.6300 \\
		$L_2(t)\ (a_1=4)~~~$  & 4.6321 & 4.5882 & 4.5882 \\
		\bottomrule
	\end{tabular}
\end{table}
\begin{table}[h!]
	\centering
	\caption{Confidence intervals with lengths of $\ln\sigma$}\label{table2}
	\begin{tabular}{@{}ccccc@{}}
		\toprule
		Method & ACI & Bt & HPD & GCI \\
		\hhline{=====}
		Lower & 4.1864& 3.9399 &4.6749 &3.1642\\
		Upper & 4.9865 & 4.8588 & 4.6773&4.0836 \\
		Length & 0.8001& 0.9188 & 0.0024 &0.9193\\
		\bottomrule
	\end{tabular}
\end{table}
\section{Concluding remarks}\label{sec9}
Shannon entropy is a measure of the level of uncertainty inherent in a random phenomenon and has wide range of application in various fields such as telecommunications, meteorology, econometrics, and molecular biology. In many of these applications, data are modeled using probability distributions; thus, the estimation of entropy in a parametric framework becomes an important work. In this paper, we consider the problem pointwise and interval estimation of the entropy of two independent normal populations with unequal means. For point wise estimation, we obtain several improved estimators under a general location invariant loss function using a decision theoretic approach. In this case, we derive MLE, RMLE, an UMVUE. We also obtain a class of improved estimators that dominates the BAEE. Furthermore, by using Brewster and Zidek technique we derive a class of smooth improved estimators that also dominates BAEE. Consequently we show that the Brewster and Zidek type estimator is coincides with the Kubokawa IERD type estimator. We derive explicit expressions of the proposed improved estimators for two specific loss functions namely quadratic loss function and Linex loss function. In addition, we derive a improved estimators under generalized Pitman closeness criterion. A detailed numerical comparison of the risk performance of the estimators have been provided. In the case of interval estimation, we derive asymptotic confidence interval, bootstrap confidence intervals, generalized confidence interval, HPD credible interval using MCMC method. The interval estimators are ranked based on the lowest ALs and highest CPs in the simulation study. Finally, a real life data analysis has been conducted to illustrate the proposed results.
\section{Appendix}
\begin{lemma} (Lemma $2.1$, \cite{kubokawa1994unified})\label{app1}
	Let $p(z)$ and $q(z)$ be positive functions such that the ratio $\dfrac{p(z)}{q(z)}$ is non-decreasing in $z$. Suppose there exists a function $r(z)$ satisfying $r(z)<0$ for $z<z^{\ast}$ and $r(z)>0$ for 
	$z>z^{\ast}$. Then, we have
	
	\begin{align*}
		\int_{0}^{\infty} r(z)\,\frac{p(z)}{q(z)}\,dz 
		\;\geq\; \frac{p(z^{\ast})}{q(z^{\ast})}\int_{0}^{\infty} r(z)\,dz.
	\end{align*}
	Equality holds if and only if $\dfrac{p(z)}{q(z)}$ is constant almost everywhere.
\end{lemma}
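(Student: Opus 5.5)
Since $q=p/(p/q)$ is only a reweighting, I will treat $p/q$ as a single non-decreasing function $\rho(z)=p(z)/q(z)$, so the claim is that $\int_0^\infty r(z)\rho(z)\,dz \ge \rho(z^\ast)\int_0^\infty r(z)\,dz$. Throughout I assume the integrals exist, as is implicit in the statement and in its application in the proof of the IERD theorem above, where the relevant integrals are finite by (C2). The plan is to subtract the right-hand side and study the single integral
\[
\int_0^\infty r(z)\bigl(\rho(z)-\rho(z^\ast)\bigr)\,dz .
\]
I will show that its integrand is pointwise non-negative.

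The key step is a sign check on the two sides of $z^\ast$. For $z<z^\ast$ we have $r(z)<0$, and by monotonicity $\rho(z)\le\rho(z^\ast)$, so both factors are non-positive and the product is $\ge 0$. For $z>z^\ast$ we have $r(z)>0$ and $\rho(z)\ge\rho(z^\ast)$, so the product is again $\ge 0$. The single point $z=z^\ast$ has measure zero. Integrating gives the displayed difference $\ge 0$, and linearity of the integral then yields the inequality. If $z^\ast\le 0$, only the second case occurs on $(0,\infty)$, and the same argument applies.

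For the equality case, suppose equality holds. Then the non-negative integrand vanishes almost everywhere. Since $r\neq 0$ off $z^\ast$, this forces $\rho(z)=\rho(z^\ast)$ for almost every $z$, so $p/q$ is constant almost everywhere. Conversely, if $\rho$ is a.e. constant, then by monotonicity the constant equals $\rho(z^\ast)$, except possibly when $z^\ast$ sits at a jump on a null set. This edge case I would handle by interpreting $\rho(z^\ast)$ as any value between the one-sided limits, and the argument goes through unchanged.

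The only step needing care is the equality characterization at a possible discontinuity of $\rho$ at $z^\ast$, which I expect to be the main (minor) obstacle. Everything else is the elementary sign argument above.
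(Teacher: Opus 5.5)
The paper gives no proof of this lemma; it only quotes it from Kubokawa (1994). Your argument is correct and is the standard proof of this sign-change inequality: the integrand $r(z)\bigl(\rho(z)-\rho(z^\ast)\bigr)$ is pointwise nonnegative, and only the ratio $\rho=p/q$ matters. Your ``only if'' direction of the equality clause is also correct.

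The one thing to fix is the ``if'' direction of the equality clause, where you put the difficulty in the wrong place.

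When $z^\ast\in(0,\infty)$, no jump can occur. If $\rho=c$ a.e. on $(0,\infty)$, the full-measure set $\{\rho=c\}$ is dense there. Choosing $a<z^\ast<b$ in it gives $c=\rho(a)\le\rho(z^\ast)\le\rho(b)=c$. In other words, a monotone function that is a.e. constant on an open interval is constant everywhere on it, so equality follows with no reinterpretation.

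The real exception is the case $z^\ast\le 0$, which you allowed in the inequality part. There the ``if'' claim is false, not merely delicate. Take $\rho$ on $[0,\infty)$ with $\rho(0)=1$, $\rho\equiv 2$ on $(0,\infty)$, $z^\ast=0$ and $r(z)=e^{-z}$. Then $\rho$ is a.e. constant, yet the left side equals $2$ and the right side equals $1$.

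Redefining $\rho(z^\ast)$ as a value between the one-sided limits changes the statement rather than proving it. The correct remedy is to require $z^\ast\in(0,\infty)$. That is exactly the situation in the paper's application, where the sign change occurs at $v_0=e^{-2\phi(y)}>0$.
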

\begin{lemma}\label{aplm}Let $d_1$, $d_2$, $n$, $\eta$ and $\alpha$ be any fixed positive real numbers with $d_1<d_2$ then the ratio of integral $\displaystyle \frac{I(y-d_2)}{I(y-d_1)}= \frac{\int_{-\alpha}^{\alpha}e^{-\frac{1}{4}\left(\sqrt{n}e^{y-d_2}w-\eta\right)^2}dw}{\int_{-\alpha}^{\alpha}e^{-\frac{1}{4}\left(\sqrt{n}e^{y-d_1}w-\eta\right)^2}dw}$ is increasing in $y$. 
\end{lemma}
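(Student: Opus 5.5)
The plan is to reduce the ratio to a single-variable monotonicity statement about the symmetrized Gaussian integral. Write
\[
I(s)=\int_{-\alpha}^{\alpha}e^{-\frac14(\sqrt{n}e^{s}w-\eta)^2}\,dw
\]
and substitute $t=e^{s}w$. This gives $I(s)=e^{-s}K(\alpha e^{s})$, where
\[
K(u)=\int_{-u}^{u}g(t)\,dt=\int_0^u k(t)\,dt,\qquad g(t)=e^{-\frac14(\sqrt{n}t-\eta)^2},\qquad k(t)=g(t)+g(-t)=2e^{-\frac14(nt^2+\eta^2)}\cosh\!\Big(\tfrac{\sqrt{n}\eta}{2}t\Big).
\]
Put $x=e^{y}$, $a=\alpha e^{-d_2}$ and $b=\alpha e^{-d_1}$, so $a<b$ because $d_1<d_2$. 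Then
\[
\frac{I(y-d_2)}{I(y-d_1)}=e^{d_2-d_1}\,\frac{K(ax)}{K(bx)},
\]
and since $x=e^y$ is increasing in $y$, it suffices to show that $x\mapsto K(ax)/K(bx)$ is nondecreasing on $(0,\infty)$.

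Differentiating the logarithm in $x$ gives
\[
\frac{1}{x}\Big[\frac{ax\,k(ax)}{K(ax)}-\frac{bx\,k(bx)}{K(bx)}\Big].
\]
Since $ax<bx$, this is nonnegative as soon as $u\mapsto u\,k(u)/K(u)$ is nonincreasing. Equivalently, the quantity
\[
\frac{K(u)}{u\,k(u)}=\int_0^1\frac{k(us)}{k(u)}\,ds
\]
should be nondecreasing in $u$. A sufficient condition is that $k(us)/k(u)$ is nondecreasing in $u$ for every fixed $s\in(0,1)$. Writing $\ell=\log k$, this amounts to $s\,\ell'(us)\ge \ell'(u)$. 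Here $\ell'(u)=-\tfrac n2u+c\tanh(cu)$ with $c=\sqrt{n}\eta/2$, so the condition reads
\[
\tfrac n2u(1-s^2)\ \ge\ c\big(\tanh(cu)-s\tanh(csu)\big).
\]

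For this inequality I would use the elementary bounds $\tanh x-\tanh(sx)\le(1-s)x$ and $(1-s)\tanh(sx)\le(1-s)sx$. Adding them gives $\tanh x-s\tanh(sx)\le(1-s^2)x$. With $x=cu$ the right-hand side is therefore at most $(1-s^2)c^2u=(1-s^2)\tfrac{n\eta^2}{4}u$, which closes the argument whenever $\eta^2\le2$. For $\eta=0$ it reduces to the log-concavity of the half-Gaussian and is immediate.

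\textbf{Main obstacle.} The difficulty is the regime $\eta^2>2$, where $k$ is bimodal and the pointwise sufficient condition genuinely fails near the origin. There I would not ask for pointwise monotonicity of $k(us)/k(u)$, but work directly with the averaged quantity $\int_0^1 k(us)\,ds/k(u)$. The first attempt would be to split $k=g(t)+g(-t)$ and treat each Gaussian piece $g(\pm t)$ separately, since each is log-concave, and then combine the pieces using that the $g(t)$ term dominates for large $u$. The second attempt, if the first fails, would be to prove the total-positivity property of $(u,t)\mapsto \mathbf 1_{\{t\le u\}}k(t)$ needed for the Karlin-type comparison.

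If neither works, I expect the lemma to hold only under an additional restriction such as $\eta\le\sqrt2$ (or for $\eta=0$, which is what part (iii) of Lemma~\ref{lmm4.1} actually requires), and I would state it in that form. This should be checked numerically before committing to the general claim.
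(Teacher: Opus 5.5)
Your argument for $\eta^2\le 2$ is correct and complete. It uses the same reduction as the paper, but you actually justify the key step. The paper writes $I$ through $Q(t)=\Phi(t-\eta/\sqrt2)-\Phi(-t-\eta/\sqrt2)$, which is your $K$ after rescaling, and it needs exactly that $tQ'(t)/Q(t)$ (your $u\,k(u)/K(u)$) is decreasing. It infers this from the log-concavity of $Q$, which is true by Pr\'ekopa. But log-concavity only makes $Q'/Q$ decreasing and says nothing about $tQ'/Q$: take $Q(t)=e^{t}$, and the paper's own $Q$ is another instance once $\eta>\sqrt2$. (The paper also writes $\frac{d}{dy}\log R$ as a sum where a difference is meant.) Your pointwise condition $s\,\ell'(us)\ge\ell'(u)$ together with the two $\tanh$ bounds is a valid substitute for that step, but only up to $\eta=\sqrt2$.

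The missing piece is not a cleverer argument for $\eta^2>2$: the lemma is false there, so neither of your two attempts can succeed, and the failure you observed near the origin is fatal. Expanding $\tanh$ gives
$s\,\ell'(us)-\ell'(u)=(1-s^2)\,u\,\bigl[\tfrac n2-\tfrac{n\eta^2}{4}+O(u^2)\bigr]$, uniformly in $s\in(0,1)$. So for small $u$ the ratio $k(us)/k(u)$ decreases in $u$ for every $s$ simultaneously, and averaging over $s$ cannot help. Quantitatively, $\rho(u)=u\,k(u)/K(u)=1+\frac{n(\eta^2-2)}{12}u^2+O(u^4)$ is increasing near $0$. Hence $\rho(ax)<\rho(bx)$ for small $x$, and the ratio decreases there for every choice of $\alpha$ and $d_1<d_2$.

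You can also see this directly from the definition. Expand the integrand in $e^{s}$; odd powers of $w$ integrate to zero, giving
\[
I(s)=2\alpha e^{-\eta^2/4}\Bigl[1+\tfrac{n(\eta^2-2)\alpha^2}{24}\,e^{2s}+O(e^{4s})\Bigr]\qquad (s\to-\infty),
\]
hence
\[
\frac{I(y-d_2)}{I(y-d_1)}=1-\tfrac{n(\eta^2-2)\alpha^2}{24}\bigl(e^{-2d_1}-e^{-2d_2}\bigr)\,e^{2y}+O(e^{4y}).
\]
For $\eta^2>2$ this lies strictly below its limit $1$ as $y\to-\infty$, so the ratio cannot be increasing. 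For instance, $n=2$, $\eta=4$, $\alpha=e$, $d_1=1$, $d_2=1+\ln 2$ gives roughly $0.97$ at $y=\ln 0.2$ and $0.88$ at $y=\ln 0.4$.

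Your closing suspicion is therefore right. Drop the two attempts and state the lemma for $0<\eta\le\sqrt2$, recording this counterexample. One correction to your remark: the lemma is invoked in part (i) of Lemma~\ref{lmm4.1} with general $\mu$, not in part (iii). In part (i), the bowl shape of $\mathcal H_\mu$ already follows from the strict convexity of $L$ assumed in (C1).
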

\noindent \textit{Proof:} Define
 $$I(y)=\int_{-\alpha}^{\alpha}e^{-\frac{1}{4}\left(\sqrt{n}e^yw-\eta\right)^2}dw.$$ By the change of variable $u=(\sqrt{n}e^yw-\eta)/\sqrt{2}$, we obtain,
 \begin{align*}
 I(y)=&\frac{\sqrt{2}}{\sqrt{n}e^y}\int_{(-\sqrt{n}e^y\alpha-\eta)/\sqrt{2}}^{(\sqrt{n}e^y\alpha-\eta)/\sqrt{2}}e^{-\frac{1}{2}u^2}du\\
 =&\frac{2\sqrt{\pi}\left[ \Phi\left(\frac{\sqrt{n}e^y\alpha-\eta}{\sqrt{2}}\right) - \Phi\left(\frac{-\sqrt{n}e^y\alpha-\eta}{\sqrt{2}}\right) \right]}{\sqrt{n}e^y},
 \end{align*}
  where $\Phi$ denotes CDF of the standard normal distribution. Consider the auxiliary function $$Q(t)=\Phi(t-\eta/\sqrt{2})-\Phi(-t-\eta/\sqrt{2}),\ t=\sqrt{n}e^y\alpha/\sqrt{2} $$ we may rewrite $I(y)=\tfrac{2\sqrt{\pi}}{\sqrt{n}e^y}Q(t)$.
For fixed $d_1<d_2$, consider the ratio $R(y)= \frac{I(y-d_2)}{I(y-d_1)}$. Substituting the above expression yields $$R(y)=e^{d_2-d_1}\frac{Q(t_2)}{Q(t_1)},$$ 
where $t_1=\sqrt{n}e^{y-d_1}\alpha$ and $t_2=\sqrt{n}e^{y-d_2}\alpha$. Differentiating $\log R(y)$ gives $$\frac{d}{dx}\log R(y)=\frac{Q'(t_2)}{Q(t_2)}t_2 + \frac{Q'(t_1)}{Q(t_1)}t_1.$$
The function $Q(t)$ is a log-concave function since it is an integral of a log-concave normal density over a symmetric interval. Hence, $\psi(t)=\frac{Q'(t)}{Q(t)}t$ is decreasing in $t>0$. Since $d_1<d_2$, it follows that $t_1>t_2$ and therefore, $\frac{Q'(t_2)}{Q(t_2)}t_2>\frac{Q'(t_1)}{Q(t_1)}t_1$. Thus $\frac{d}{dx}\log R(y)>0$, which proves that $R(y)$ is strictly increasing in $y$.

\section*{Disclosure statement}
No potential conflict of interest was reported by the authors.
\section*{Funding}
Lakshmi Kanta Patra thanks the Anusandhan National Research Foundation, India for providing financial support to carry out this research with project number MTR/2023/000229, 02011/38/2023 NBHM (R.P)/R \& DII/13409.

\end{document}